      \newcommand{\href}[2]{#2}
\newcommand{\abs}[1]{\left\lvert{#1}\right\rvert}
\newcommand{\abss}[1]{\lvert{#1}\rvert}
\DeclareMathOperator{\pr}{\rm{pr}}
\DeclareMathOperator{\inter}{\rm{int}}
\DeclareMathOperator{\bd}{\partial}
\DeclareMathOperator{\fix}{\rm{Fix}}
\newcommand{\mc}{\mathcal}
\newcommand{\ol}{\overline}
\newcommand{\widesim}[2][1.5]{
  \mathrel{\overset{#2}{\scalebox{#1}[1]{$\sim$}}}
}
\newcommand{\R}{\mathbb{R}}\newcommand{\N}{\mathbb{N}}
\newcommand{\Z}{\mathbb{Z}}\newcommand{\Q}{\mathbb{Q}}
\newcommand{\T}{\mathbb{T}}
\newcommand{\D}{\mathbb{D}}\newcommand{\A}{\mathbb{A}}
\renewcommand{\SS}{\mathbb{S}}
\newcommand{\sm}{\setminus}
\newcommand{\ie}{i.e.\ }
\newcommand{\cA}{\mathcal{A}}
\newcommand{\cB}{\mathcal{B}}
\newcommand{\cK}{\mathcal{K}}
\newcommand{\cM}{\mathcal{M}}
\newcommand{\cU}{\mathcal{U}}
\newcommand{\cX}{\mathcal{X}}
\newtheorem{theorem}{Theorem}[section] %[section]
\newtheorem{corollary}[theorem]{Corollary}
\newtheorem{lemma}[theorem]{Lemma}
\newtheorem{proposition}[theorem]{Proposition}
\newtheorem*{proposition*}{Proposition}
\newtheorem{claim}{Claim}
\newtheorem*{question*}{Question}
\newtheorem*{theorem*}{Theorem}
\newtheorem*{claim*}{Claim}
\newtheorem{addendum}[theorem]{Addendum}
\newtheorem{theoremain}{Theorem}
\newtheorem{corollarymain}[theoremain]{Corollary}
\theoremstyle{definition}
\theoremstyle{remark}
\newcommand{\de}{\textit}
\newcommand{\supe}{\mathrm{sup}}
\newcommand{\infe}{\mathrm{inf}}
\numberwithin{equation}{section}
\newcommand{\til}{\tilde} %override
\title{Realizing rotation numbers on annular continua}
\author{Andres Koropecki}
\thanks{The author was supported by research grants from CNPq-Brasil and FAPERJ-Brasil}
\address{Universidade Federal Fluminense, Instituto de Matem\'atica e Estat\'\i stica, Rua M\'ario Santos Braga S/N, 24020-140 Niteroi, RJ, Brasil}
\email{ak@id.uff.br}
\begin{document}
\begin{abstract}
An annular continuum is a compact connected set $K$ which separates a closed annulus $A$ into exactly two connected components, one containing each boundary component. The topology of such continua can be very intricate (for instance, non-locally connected). We adapt a result proved by Handel in the case where $K=A$, showing that if $K$ is an invariant annular continuum of a homeomorphism  of $A$ isotopic to the identity, then the rotation set in $K$ is closed. Moreover, every element of the rotation set is realized by an ergodic measure supported in $K$ (and by a periodic orbit if the rotation number is rational) and most elements are realized by a compact invariant set. Our second result shows that if the continuum $K$ is minimal with the property of being annular (what we call a \emph{circloid}), then every rational number between the extrema of the rotation set in $K$ is realized by a periodic orbit in $K$. As a consequence, the rotation set is a closed interval, and every number in this interval (rational or not) is realized by an orbit (moreover, by an ergodic measure) in $K$. This improves a previous result of Barge and Gillette.
\end{abstract}

\maketitle

The classical rotation number is a dynamical invariant, introduced by Poincar\'e to study the dynamics of a homeomorphism $f\colon \T^1\to \T^1$ homotopic to the identity. As is well-known, it is defined for a lift $F\colon \R\to \R$ to the universal covering and $x\in \R$ as $\rho(F,x) = \lim_{n\to \infty} (F^n(x)-x)/n$. The limit $\rho(F) = \rho(F,x)$ always exists and is independent on the choice of $x$.  A great deal of the dynamical information of $f$ can be deduced from this invariant. In particular, when $\rho(F)$ is irrational, $f$ is monotonically semiconjugate to the irrational rotation $x\mapsto x+\rho(F) \, (\text{mod }\Z)$, and when $\rho(F)=p/q$ is rational, one knows that $f$ has a periodic orbit; more precisely, there exists $x\in \R$ such that $F^q(x)=x+p$. 

The success of this rotational invariant in the study of circle homeomorphisms led to a vast literature of generalizations and applications of this notion in different settings, for instance for endomorphisms of the circle \cite{MR591976, MR699057, MR818352}, homeomorphisms of the torus \cite{MR1053617,MR1101087,MR958891}, and surfaces of higher genus \cite{MR1094554,MR1404018}.

For the particular case of a homeomorphism $f\colon \A=\T^1\times [0,1]\to \A$ of the annulus isotopic to the identity, a \emph{rotation set} can be defined for a lift $F\colon \R\times [0,1]=\cA\to \cA$ as the set $\rho(F)$ of all limits $\rho(F,z) = \lim_{n\to \infty}\pr_1(F^n(z)-z)/n$ whenever they exist, where $\pr_1\colon (x,y)\mapsto x$. These limits do not always exist, but it can be shown that $\rho(F)$ is a bounded nonempty set. Very simple examples show that $\rho(F)$ may fail to be an interval. However, in certain settings this is not the case. A model example is the case of area-preserving \emph{twist} maps \cite{MR670747}, which states that for these maps the rotation set is a compact interval, and in addition every element of $\alpha\in \rho(F)$ has an associated \emph{Aubry-Mather} set $Q_\alpha$, which is a compact invariant set whose every point has rotation number $\alpha$ (where we define the rotation number $\rho(F,z)$ for $z\in \A$ as the rotation number of any lift of $z$ to the universal covering). Moreover, if $\alpha$ is rational, $Q_\alpha$ may be chosen to be a periodic orbit. 

If one removes the twist condition (still assuming that $f$ is area-preserving), a generalization of the Poincar\'e-Birkhoff theorem due to Franks \cite{MR967632} implies that for every rational $p/q$ in the convex hull of the rotation set there exists $z$ such that $F^q(z)=z+(p,0)$. Whenever this happens, the projection of $z$ to $\A$ is called a $(p,q)$-periodic point. Thus, again, having a rotation set with more than one point leads to an abundance of periodic orbits. The question of whether irrational elements of the convex hull of $\rho(F)$ are also realized by points (or more generally by compact invariant sets, as in the case of twist maps) was mostly settled by Handel \cite{MR1037109} (see also \cite{MR2038197}). Indeed, Handel's theorem shows that even without an area-preserving condition, the rotation set is a closed set, the set of points of $\A$ with any given rotation number has full measure for some invariant ergodic probability, and and all but a discrete set of values of $\alpha\in \rho(F)$ have a corresponding compact invariant set $Q_\alpha$ as in the twist case (which may be chosen as a periodic orbit if $\alpha$ is rational). In the particular case where $f$ is area-preserving, this result combined with Franks' also implies that $\rho(F)$ is a compact interval.

In this paper we are interested in similar results for spaces which may be topologically intricate, but for which there is still a notion of ``rotation''. To be precise, we say that a subset $K\subset \inter{\A}$ is an essential continuum if it is a continuum which separates the two boundary components of $\A$. If $\A\sm K$ has exactly two components, then $K$ is called an (essential) annular continuum. Equivalently, an essential annular continuum is a decreasing intersection of essential closed topological annuli in $\inter \A$. 
Annular continua can have very pathological topological properties. For instance, they can be ``hairy'', non-locally connected, or even hereditarily indecomposable as is the case with the pseudo-circle \cite{MR0043451}. Moreover, these kinds of pathological invariant continua appear frequently in dynamics, even for smooth or analytic maps \cite{MR1307903, MR663889, MR856520, MR951271}, so any result providing dynamical information on invariant continua of this kind has potential applications in different settings. Examples of applications of these type of results in the $C^r$-generic area-preserving setting can be found in \cite{MR1971199, MR3314477, MR2719428}.

If $K$ is an invariant essential annular continuum, the rotation set $\rho(F,K)$ is the set of all rotation numbers $\rho(F,x)$ of points $x\in K$ for which this number exists.
Our first result is an adaptation of Handel's results to annular continua.

\begin{theoremain} \label{th:handel-gen} Let $f\colon \A \to \A$ be a homeomorphism isotopic to the identity and $F\colon \R\to \R$ a lift. Suppose that $K\subset \inter \A$ is an essential annular continuum. Then:
\begin{itemize}
\item[(1)] The rotation set $\rho(F,K)$ is a closed set;
\item[(2)] For each $\alpha\in \rho(F,K)$ there is an ergodic measure $\mu$ supported on $K$ such that $\rho(F, x)=\alpha$ for $\mu$-almost every $x$;
\item[(3)] With the exception of at most a discrete set of values $\alpha\in \rho(F,K)$, there is a compact invariant set $Q_\alpha \subset K$ such that $\rho(F,x)=\alpha$ for all $x\in Q_\alpha$. If $\alpha=p/q$ is rational then $Q_\alpha$ exists and is realized by a $(p,q)$-periodic orbit.
\end{itemize}
\end{theoremain}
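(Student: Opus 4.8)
The plan is to deduce (1)--(3) from a single \emph{intermediate-value-with-realization} statement, everything else being soft ergodic theory. Write $\phi\colon\A\to\R$ for the displacement cocycle, $\phi(z)=\pr_1(F(\tilde z))-\pr_1(\tilde z)$ for any lift $\tilde z$ of $z$ (well defined since $F$ commutes with the deck translation), and $S_n\phi=\sum_{k=0}^{n-1}\phi\circ f^k$, so that $\rho(F,z)=\lim_n\tfrac1n S_n\phi(z)$ whenever the limit exists. Since $K$ is an annular continuum, $\A\sm K$ has exactly two components $U\ni\T^1\times\{0\}$ and $V\ni\T^1\times\{1\}$, and $f$, being isotopic to the identity, maps each onto itself. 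Fix a decreasing sequence $A_1\supset A_2\supset\cdots$ of essential closed topological annuli in $\inter\A$ with $\bigcap_n A_n=K$, which by continuity of $f^{\pm1}$ we may arrange so that $f^{\pm1}(A_n)\subset A_{n-1}$ (so orbits that stay near $K$ remain in $A_{n-1}$ for a controlled time). The statement to prove is $(\star)$: if $\alpha',\alpha''\in\rho(F,K)$ with $\alpha'<\gamma<\alpha''$, then $\gamma\in\rho(F,K)$; moreover $\gamma$ is realized by an ergodic measure supported on $K$, by a compact invariant $Q_\gamma\subset K$ for all $\gamma$ outside a discrete subset of $(\alpha',\alpha'')$, and by a $(p,q)$-periodic orbit in $K$ if $\gamma=p/q$.

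Granting $(\star)$, the theorem follows. First, the extrema $m:=\inf\rho(F,K)$ and $M:=\sup\rho(F,K)$ are realized by ergodic measures on $K$: taking $x_j\in K$ with $\rho(F,x_j)\to m$, then $n_j\to\infty$ with $\lvert\tfrac1{n_j}S_{n_j}\phi(x_j)-\rho(F,x_j)\rvert<1/j$, a weak-$*$ limit $\mu$ of the (almost invariant) empirical measures $\tfrac1{n_j}\sum_{k<n_j}\delta_{f^k(x_j)}$ is $f$-invariant, supported on $K$, with $\int\phi\,d\mu=m$; every ergodic component of $\mu$ realizes a number in $\rho(F,K)$, hence $\ge m$, and since these average to $m$, almost every ergodic component realizes $m$ exactly. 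The same works for $M$, so together with $(\star)$ for $\gamma\in(m,M)$ this gives (2). Closedness (1) is then immediate: a limit $\alpha$ of elements of $\rho(F,K)$ lies in $[m,M]$, and $\alpha\in\{m,M\}$ or $\alpha\in(m,M)$ is in $\rho(F,K)$ by the above and by $(\star)$ with $\alpha'=m$, $\alpha''=M$. Item (3) for $\alpha\in(m,M)$ is the second half of $(\star)$; rational endpoints are treated at the end.

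The heart of the matter is $(\star)$, which I would prove by adapting Handel's argument from $K=\A$ to this nested-annulus picture. Suppose first that some $x\in K$ has $S_{nq}(\phi-p/q)(x)$ (for $\gamma=p/q$; rationals approximate the irrational case) unbounded both above and below along its orbit: then a returning-disk argument of Franks--Handel type, run in an annular neighbourhood of $K$, directly produces the desired periodic orbit (or invariant set) with rotation number $\gamma$. Otherwise each $x\in K$ has upper rotation number $<\gamma$ or lower rotation number $>\gamma$, partitioning $K$ into sets $K^-,K^+$, both nonempty because $\alpha'<\gamma<\alpha''$. Thickening $K^\mp$ to open sets that are respectively forward- and backward-adapted to $f$ inside a suitable $A_n$, and using that the essential continuum $K$ cannot be split off from either boundary component of $\A$ by these adapted sets, Handel's Brouwer-type forcing lemma yields an orbit whose rotation number is pinned to $\gamma$, a contradiction; tracked quantitatively it produces compact invariant sets of rotation number $\gamma$, the discrete exceptional set of $\gamma$ coming from Handel's bookkeeping. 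Since $K$ is not a manifold, every construction is carried out inside the $A_n$, producing compact invariant sets $Q^{(n)}_\gamma$ of rotation number $\gamma$ accumulating on $K=\bigcap_n A_n$; a Hausdorff-limit argument then extracts a compact invariant $Q_\gamma\subset K$, with the strict inequalities $\alpha'<\gamma<\alpha''$ supplying the uniform drift bounds that force the limit rotation number to be exactly $\gamma$. For $\gamma=p/q$, including the endpoints $p/q\in\{m,M\}$, one upgrades to an honest $(p,q)$-periodic point of $K$ using a Poincar\'e--Birkhoff--Franks theorem localized near $K$ when $p/q$ is interior, and, at an endpoint, a separate argument combining recurrence (Atkinson's lemma for the cocycle $\phi-p/q$ along the realizing ergodic measure) with Franks' fixed point theorem.

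I expect the passage to the limit to be the main obstacle. Because $K$ may be non-locally-connected or hereditarily indecomposable, there is no local model to work with, and one must verify that Handel's constructions --- returning disks, connecting arcs, the auxiliary Brouwer-line arguments --- performed in the shrinking annuli $A_n$ do not leak out of $K$ in the limit, and that the orbits they produce carry rotation number $\gamma$ rather than merely some value in $[\alpha',\alpha'']$. Controlling this leak, via the arrangement $f^{\pm1}(A_n)\subset A_{n-1}$ and the uniform estimates coming from $\alpha'<\gamma<\alpha''$, is where the work concentrates; note that the tempting shortcut of reducing $(\star)$ to Handel's theorem applied to the prime-end compactifications of $U$ and $V$ fails as soon as $K$ has nonempty interior, since interior dynamics is invisible to prime ends, so the argument must be run intrinsically in neighbourhoods of $K$.
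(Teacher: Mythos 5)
Your ergodic-theoretic framing---reducing everything to a single intermediate-value-with-realization statement $(\star)$, and handling the extrema $m,M$ by weak-$*$ limits of empirical measures and ergodic decomposition---is sound and broadly parallel to how the paper sets things up (Proposition \ref{pro:cases}, which is Handel's trichotomy, plays the role of your $(\star)$; Theorem \ref{th:CR-reali} plays the role of your ``Atkinson plus Franks'' step for rational endpoints). But the heart of the matter, which you correctly identify as ``the passage to the limit,'' is precisely where the proposal breaks down, and the mechanism you propose does not repair it.

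The difficulty is that the nested annuli $A_n$ are not $f$-invariant, and the condition $f^{\pm 1}(A_n)\subset A_{n-1}$ does not change that. Handel's construction (the pseudo-Anosov models, the shadowing, the Brouwer-forcing lemmas) runs on a fixed surface carrying the dynamics; there is no version of it that is ``localized to $A_n$'' in a way that would guarantee the output $Q^{(n)}_\gamma$ lies in $A_n$ --- an $f$-invariant compact set produced by the argument can sit anywhere in $\A$ with the prescribed rotation number, and a priori none of them need lie in $K$. So the Hausdorff-limit step has nothing to feed on, and the ``uniform drift bounds from $\alpha'<\gamma<\alpha''$'' you invoke do not force the limiting set into $K$ either. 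The paper resolves this by a genuinely different device, which is the actual content of Section 5: after normalizing to $p/q=0$, one uses Proposition \ref{pro:flc-extension} (the Franks--Le~Calvez extension) to replace $f$ by a homotopic-rel-$K$ map $g'$ that agrees with $f$ near $K$ and has \emph{no fixed points outside $K$}. One then runs the pA/shadowing machinery (Propositions \ref{pro:pseudo}, \ref{pro:shadow-uniform}, \ref{pro:semi}, Lemma \ref{lem:rel-shadow}) for $g'$, obtaining a compact invariant set $Q_0'^*$ of points with bounded $\hat g'$-orbits. If $Q_0'^*$ avoided $K$ it would be trapped in one of the two $\R^2$-like complementary components $U_\pm$, where $\hat g'$ is fixed-point free; Brouwer theory (Proposition \ref{pro:brouwer-free}) forbids a bounded nonempty invariant set there. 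Hence $Q_0'^*$ meets $K$, and the global $C$-shadowing relation, which is unchanged on $K$ where $g'=g$, carries this back to the original map. This ``fixed-point-free extension plus Brouwer trap'' idea is the missing ingredient in your outline; the shrinking-annulus bookkeeping cannot substitute for it.

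One smaller point: you need the refined form of the shadowing statements (the paper's Addendum to Proposition \ref{pro:pseudo} and Lemma \ref{lem:rel-shadow}, which give a \emph{uniform} shadowing constant over the family $\{Q_t\}_{t\in I}$ and control the distance from the shadowed sets to the punctures $R$). Without uniformity in $t$ you cannot pass to the limit $t_i\to n\alpha$ to conclude $\rho(F,y)=\alpha$ on the limiting set. Your sketch does not address this, and it is not automatic from the version of these statements in \cite{MR1037109}.
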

We remark that, as in \cite{MR1037109}, we do not know whether the discrete set of exceptional values from case (3) may actually be nonempty (however, see \cite{MR2038197} for a special case).

A special kind of annular continuum is a \emph{cofrontier}, which is an essential annular continuum which equal to the boundary of each of its two complementary components. Barge and Gillette proved a Poincar\'e-Birkhoff type result (similar to Franks' result for area-preserving homeomorphisms) for invariant essential cofrontiers \cite{MR1145613}, which states that any rational number in the convex hull of the rotation set of the cofrontier is realized by a periodic orbit. Moreover, the authors also showed that the convex hull of the rotation set contains the two \emph{prime ends rotation numbers} associated to the continuum (see Section \ref{sec:prime-ends}), and if the rotation set  has more than one point then it is necessarily an indecomposable continuum (\ie it is not the union of two proper subcontinua). For other results in the same vein, see \cite{MR1158867,MR1743798,MR1609503}.
Theorem \ref{th:handel-gen} improves the Barge-Gillette theorem by concluding that the convex hull of the rotation set is equal to the rotation set itself, and all elements (rational or not) are realized by ergodic measures. This includes the prime ends rotation numbers. 

A continuum which is minimal with the property of being an essential annular continuum (\ie it contains no proper essential annular subcontinua) is called a \emph{circloid} (the terminology is taken from \cite{MR2501297}). Any cofrontier is a circloid, but unlike cofrontiers, invariant circloids frequently arise in dynamics, particularly in the boundary of invariant open sets \cite{MR2501297, MR2587460, jager-passeggi}. Our second result generalizes the Barge-Gillette theorem to circloids. 

\begin{theoremain}\label{th:reali-rational} Let $f\colon \A\to \A$ be a homeomorphism isotopic to the identity, $K\subset \inter \A$ an essential invariant circloid, and $F\colon \cA\to \cA$ a lift of $f$. Then every rational $p/q$ in the convex hull of $\rho(F,K)$ is realized by a $(p,q)$-periodic point in $K$.
\end{theoremain}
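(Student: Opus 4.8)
The plan is to reduce the statement to a planar fixed point problem and attack it with Brouwer--Franks theory, using Theorem~\ref{th:handel-gen} to supply the needed invariant measures. First I would dispose of the endpoints and pass to an iterate. By Theorem~\ref{th:handel-gen}(1) the set $\rho(F,K)$ is closed, so $\conv\rho(F,K)=[\alpha,\beta]$ with $\alpha,\beta\in\rho(F,K)$; when an endpoint is rational it is already realized by a periodic orbit in $K$ by Theorem~\ref{th:handel-gen}(3), so only rationals $p/q$ (in lowest terms) with $\alpha<p/q<\beta$ remain. Writing $T(x,y)=(x+1,y)$ and replacing $f$ by $f^q$ and $F$ by $F^qT^{-p}$ --- still a lift of a homeomorphism isotopic to the identity, with the same essential invariant circloid $K$, and with $\rho(F^qT^{-p},K)=q\,\rho(F,K)-p$ --- the problem becomes: \emph{if $0$ is interior to $\conv\rho(F,K)$, then $F$ has a fixed point in $\pi^{-1}(K)$}, where $\pi\colon\cA\to\A$ is the covering projection. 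In this situation, applying Theorem~\ref{th:handel-gen}(2) to the endpoints $\alpha<0<\beta$ yields ergodic $f$-invariant probability measures $\mu^{-},\mu^{+}$ supported on $K$ with $\int\phi\,d\mu^{-}<0<\int\phi\,d\mu^{+}$, where $\phi(x)=\pr_1(F(\tilde x))-\pr_1(\tilde x)$ is the well-defined continuous displacement function on $\A$.

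I would then argue by contradiction: assume $F$ has no fixed point in $\pi^{-1}(K)$, and produce a lift $G$ of a homeomorphism $g$ of $\A$ isotopic to the identity, which agrees with $F$ on $\pi^{-1}(K)$ and is fixed-point free on all of $\cA$. Write $\A\sm K=U^{-}\sqcup U^{+}$ for the two complementary domains, both $f$-invariant since $f$ is isotopic to the identity and with $\bd U^{\pm}\subset K$; capping the corresponding end of $\A$ makes $U^{\pm}$ into simply connected domains, and I would invoke their prime ends compactifications and the associated prime ends rotation numbers $\rho^{\pm}_{pe}$ (Section~\ref{sec:prime-ends}). After a preliminary modification of $f$ to rigid rotations in collars of the two boundary circles of $\A$ (disjoint from $K$, so that the dynamics near $K$ and the numbers $\rho^{\pm}_{pe}$ are unchanged while the two capping points become index-one fixed points of the sphere extension of $f$), a Lefschetz index count on the capped disks shows that $f$ can be altered inside $U^{-}\cup U^{+}$, keeping it equal to $f$ on $K$ and isotopic to the identity, so as to remove all fixed points of the lift from $\pi^{-1}(U^{-}\cup U^{+})$ --- provided neither $\rho^{+}_{pe}$ nor $\rho^{-}_{pe}$ is an integer. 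If some $\rho^{\pm}_{pe}$ is the integer $0$, the corresponding prime ends circle map has a fixed prime end whose lift is also fixed, and the lift of its impression is a compact acyclic $F$-invariant subcontinuum of $\pi^{-1}(K)$, on which Cartwright--Littlewood's fixed point theorem produces a fixed point of $F$ in $K$, contradicting the assumption; and the case of a nonzero integer prime ends rotation number is ruled out by the minimality of $K$ (this is the point at which Theorem~\ref{th:reali-rational} genuinely needs the circloid hypothesis: for a thick annular continuum such as a solid subannulus, a nonzero integer can occur, and there the conclusion of the theorem indeed fails).

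Once such a fixed-point free lift $G$ is in hand, Brouwer plane translation theory finishes the argument. By Franks' lemma on the absence of periodic free disk chains for fixed-point free surface homeomorphisms \cite{MR967632}, the quantity $\int\phi\,d\mu$ has constant sign over all $g$-invariant Borel probability measures $\mu$ (equivalently, all $g$-non-wandering points have lifts whose forward displacements tend to $+\infty$, or all to $-\infty$). But $G$ agrees with $F$ on $\pi^{-1}(K)$, so $\mu^{-}$ and $\mu^{+}$ are $g$-invariant, supported on $K$, and satisfy $\int\phi\,d\mu^{-}<0<\int\phi\,d\mu^{+}$, while Poincar\'e recurrence makes $\mu^{\pm}$-almost every point non-wandering --- a contradiction. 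Hence $F$ has a fixed point in $\pi^{-1}(K)$, which projects to a $(p,q)$-periodic point of $f$ in $K$.

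The step I expect to be the main obstacle is the construction of the fixed-point free lift $G$: carrying out the prime ends modification so as to delete the fixed points from the complementary domains while preserving both the dynamics on $K$ and the isotopy class, and, crucially, using the minimality of the circloid to exclude the degenerate nonzero-integer prime ends case. Everything else is either contained in Theorem~\ref{th:handel-gen} or is standard planar topology.
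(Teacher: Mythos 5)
The reduction to a fixed point problem, the use of Theorem~\ref{th:bg-reali-prime} and Proposition~\ref{pro:flc-extension} to obtain a lift $G$ that is fixed-point free on all of $\cA$ while agreeing with $F$ near $\pi^{-1}(K)$, and the supply of ergodic measures of both signs via Theorem~\ref{th:handel-gen} all match the paper. The gap is in the final step. Your claim that ``by Franks' lemma the quantity $\int\phi\,d\mu$ has constant sign over all $g$-invariant Borel probability measures'' is false for a general fixed-point-free lift. Concretely, take the skew product $f(x,y)=(x+r(y),s(y))$ on $\T^1\times[0,1]$ with $r(0)=-1/2$, $r(1)=1/2$, $s(0)=0$, $s(1)=1$ and $s(y)>y$ for $y\in(0,1)$: its obvious lift has no fixed point (wherever $r$ vanishes, $s(y)\neq y$), the two boundary circles carry ergodic measures with rotation numbers $\mp 1/2$, and every interior orbit drifts upward. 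Embedding this as an invariant closed subannulus $K\subset\inter\A'$, the prime ends rotation numbers of $K$ are $\mp 1/2\neq 0$, so your fixed-point removal step applies and produces precisely the fixed-point-free lift with invariant measures of both signs that you asserted cannot exist. This is essentially the example mentioned in the paper right after Theorem~\ref{th:CR-reali} to explain why the chain-recurrence hypothesis there is necessary. The underlying reason your step fails is that positive and negative returning disks do not by themselves yield a \emph{periodic} free disk chain; one needs a chain from one to the other, and in a thick annular continuum orbits can escape to $+\infty$ and $-\infty$ without ever coming back near each other.

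This is also where you have misplaced the role of the circloid hypothesis. It is not needed to exclude nonzero integer prime ends rotation numbers --- Proposition~\ref{pro:flc-extension} requires only $\rho^\pm(F,K)\neq 0$, and that already follows from Theorem~\ref{th:bg-reali-prime} once $F$ has no fixed point in $\pi^{-1}(K)$, regardless of whether $K$ is a circloid. The hypothesis enters exactly at the step you treat as routine. The paper, after establishing $\fix(G)=\emptyset$ exactly as you propose, uses Lemma~\ref{lem:franks-chain} and Corollary~\ref{cor:franks-recurrence} to write $\cK=\cK^+\cup\cK^-$ according to whether $\pr_1 F^n(z)\to+\infty$ or $-\infty$, and then leans heavily on the circloid topology via Lemma~\ref{lem:circloid-subcontinua} --- long subcontinua near $\cK$ must approach every point of $\bd\cK$ --- to prove that $\bd\cK\cap\cK^+$ and $\bd\cK\cap\cK^-$ are both nonempty, that every subcontinuum of $\bd\cK$ lies entirely in one of them, and that both are dense in $\bd\cK$; a Baire category argument then yields the contradiction. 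None of this structure is present in your sketch, and Franks' lemma alone cannot stand in for it.
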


As an immediate consequence of our two results (and a result of Mastumoto \cite{MR2869068}, see Theorem \ref{th:matsumoto-prime} ahead) we have the following:

\begin{corollarymain}\label{cor:main} Under the same hypotheses of Theorem \ref{th:reali-rational}, the rotation set $\rho(F,K)$ is a closed interval which contains the prime ends rotation numbers, and all the conclusions of Theorem \ref{th:handel-gen} hold.
\end{corollarymain}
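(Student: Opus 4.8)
The plan is to combine the two main theorems with Matsumoto's result on prime ends rotation numbers of circloids.

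First I would invoke Theorem \ref{th:reali-rational}: every rational in the convex hull of $\rho(F,K)$ is realized by a periodic point in $K$. Next I would note that realized rationals are in $\rho(F,K)$ by definition, so if $p_1/q_1 < p_2/q_2$ are two rationals in the convex hull $[\,a,b\,]=\conv\rho(F,K)$, then they belong to $\rho(F,K)$, and hence so does every rational in between. Since $\rho(F,K)$ is closed by Theorem \ref{th:handel-gen}(1), and it contains a dense subset of $[\,a,b\,]$ (all rationals strictly between $a$ and $b$, which accumulate on $a$ and $b$), we conclude $\rho(F,K)=[\,a,b\,]$ — a closed interval. The only degenerate case to mention is when $\conv\rho(F,K)$ is a single point, in which case the statement is trivial.

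Then I would bring in Matsumoto's theorem (cited ahead as Theorem \ref{th:matsumoto-prime}), which says that for an invariant circloid, the two prime ends rotation numbers lie in the convex hull of $\rho(F,K)$; since that convex hull now coincides with $\rho(F,K)$, the prime ends rotation numbers belong to $\rho(F,K)$. Finally, all the conclusions of Theorem \ref{th:handel-gen} apply verbatim since a circloid is in particular an essential annular continuum, giving (1), (2), and (3); combined with Theorem \ref{th:reali-rational} one even has that \emph{every} rational in the interval (not just those outside the possible exceptional discrete set) is realized by a periodic orbit.

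This corollary is essentially a bookkeeping consequence, so there is no real obstacle: the mild point to be careful about is the edge case where $\rho(F,K)$ is empty or a single point (so that "interval" degenerates), and making sure the accumulation argument correctly handles the endpoints $a,b$ of the convex hull — but closedness from Theorem \ref{th:handel-gen}(1) takes care of that immediately.
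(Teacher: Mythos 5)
Your argument is correct and is exactly the intended ``immediate consequence'' the paper has in mind: Theorem~\ref{th:reali-rational} places all rationals of the convex hull in $\rho(F,K)$, Theorem~\ref{th:handel-gen}(1) gives closedness so the convex hull is swallowed whole, Theorem~\ref{th:matsumoto-prime} then places the prime ends rotation numbers in that interval, and the remaining conclusions transfer since a circloid is an essential annular continuum. Your handling of the degenerate single-point case and of the endpoints via closedness is the right care to take; nothing is missing.
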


Under the hypotheses of the previous theorem, if the rotation set is not a single point it is known that $K$ must be an indecomposable continuum \cite{MR1145613, jager-koro}. The previous theorem thus implies that it must contain many periodic orbits of arbitrarily large periods, as well as compact invariant sets realizing almost all  numbers in the rotation set. In the annulus, there are simple examples with these features having zero topological entropy; for example the map $(x,y)\mapsto (x, y+x)$. However, we do not know whether an example of this kind exists in a circloid. In fact, the following question has been asked by a number of people.
\begin{question*} Is it true that for an invariant circloid (or cofrontier) $K$, either the rotation set is a single point or the topological entropy on $K$ is positive?
\end{question*}
Progress in this direction was recently announced by Passeggi, Potrie and Sambarino \cite{pps}.
We remark that, along the same lines, it is known that if a homeomorphism of the torus homotopic to the identity has a rotation set (which is a subset of $\R^2$) with nonempty interior, then the homeomorphism has positive topological entropy \cite{MR1101087}.

\subsection*{Acknowledgements} I would like to thank A. Passeggi and T. J\"ager for the discussions that motivated this paper and for their suggestions, and M. Handel for his availability to answer my questions and for the helpful comments. I also thank the anonymous referee for pointing out a mistake in the statement of Theorem \ref{th:handel-gen} and for other corrections.

\section{General definitions and topological properties}
\label{sec:continua}

Whenever we work on a compact surface $N$, we will denote by $d(\cdot, \cdot)$ some previously fixed metric on the surface. The lift of the metric $d$ to the universal cover of $N$ is an equivariant metric which we still denote by $d$, since there is no risk of ambiguity. We will denote the $\delta$-neighborhood of a set $E$ under the metric $d$ by $B_\delta(E)$. 

Let $\A=\T^1\times [0,1]$ be a closed annulus and $K\subset \inter \A$ an essential continuum. We denote by $U_+=U_+(K)$ and $U_-=U_-(K)$ the components of $\A\sm K$ containing $\T^1\times\{1\}$ and $\T^1\times \{0\}$, respectively. We say that $K$ is an essential \de{annular continuum} if one of these equivalent properties holds:
		\begin{itemize}
			\item $\A\sm K=U_-\cup U_+$;
			\item $K$ is a decreasing intersection of closed essential topological annuli.
		\end{itemize}
An essential annular continuum $K$ is a \de{circloid} if one of the following equivalent properties hold:
\begin{itemize}
\item $K$ is minimal among essential annular continua, \ie it contains no proper essential annular subcontinuum;
\item $\bd K$ is a continuum and it contains no proper essential subcontinuum;
\item $\bd K = \bd_\A U_- = \bd_\A U_+$;
\item $\bd K = \bd_\A U_- \cap \bd_\A U_+$.
\end{itemize}
An \de{essential cofrontier} is an essential circloid which has empty interior, or equivalently an essential continuum which minimally separates $\A$ into exactly two components.

More generally, a continuum $K$ is said to be an [\emph{annular continuum, circloid, cofrontier}] if it has some neighborhood $A$ homeomorphic to $\A$ such that $K$ is an essential [annular continuum, circloid, cofrontier] in $A$.

Let $\pi\colon \cA = \R\times [0,1]\to \cA$ be the universal covering map, $\cK=\pi^{-1}(K)$, and $\cU_\pm = \pi^{-1}(U_\pm)$. Then:
\begin{itemize}
\item If $K$ is an essential annular continuum, then $\cA\sm \cK=\cU_+\cup\cU_-$;
\item If $K$ is an essential circloid, then $\cA\sm \cK=\cU_+\cup\cU_-$ and $\bd\cK = \bd\cU_+ \cap \bd\cU_-$; in particular, any closed subset of $\cK$ separating $\cU_+$ from $\cU_-$ contains $\bd \cK$.
\end{itemize}

The following lemma (which is a version for circloids of \cite[Lemma 2.1]{MR1145613}) will be useful in the proof of Theorem \ref{th:reali-rational}.

\begin{lemma}\label{lem:circloid-subcontinua} Let $K\subset \inter \A$ be an essential circloid and $\cK=\pi^{-1}(K)$. For every $\epsilon>0$ and $N>0$ there exist $\delta>0$ and $M>0$ such that if $C\subset B_\delta(\cK)$ is a continuum such that $[-M,M]\subset \pr_1(C)$, then $\bd \cK\cap ([-N, N] \times [0,1])\subset B_\epsilon(C)$. %\xxx{add hairy?}
\end{lemma}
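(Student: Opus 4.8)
The plan is to argue by contradiction, using the circloid hypothesis in the sharp form $\bd\cK=\bd\cU_+=\bd\cU_-$ (one of the stated characterizations, together with the fact that lifting preserves boundaries). The idea: if some $z_*\in\bd\cK$ with $\abs{\pr_1(z_*)}\le N$ fails to be $\epsilon$-close to $C$, I will construct an arc crossing the strip $\cA$ from $\R\times\{1\}$ to $\R\times\{0\}$, contained in a fixed vertical band and disjoint from $C$; such an arc separates $\cA$ into a ``left'' and a ``right'' region, and a connected set disjoint from it must lie in only one of them, which contradicts $\pr_1(C)\supseteq[-M,M]$ once $M$ exceeds the width of the band.

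\emph{Uniform data.} Since $\bd\cK\cap([-N,N]\times[0,1])$ is compact and each of its points lies in $\bd\cU_+\cap\bd\cU_-$, a compactness argument (the closed sets $\{z:\ d(z,\{w\in\cU_\pm:d(w,\cK)\ge\rho\})\ge\epsilon/4\}$ decrease to $\emptyset$ as $\rho\downarrow 0$) produces a single $\rho_0>0$ such that every such $z$ has points $a^+\in\cU_+$, $a^-\in\cU_-$ with $d(a^\pm,z)<\epsilon/4$ and $d(a^\pm,\cK)\ge\rho_0$. Fix also a constant $R_0$, depending only on $N$ and $\epsilon$, with $B_{\epsilon/4}(z)\subseteq\{\abs{\pr_1}\le R_0\}$ whenever $\abs{\pr_1(z)}\le N$. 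Because $K$ is a decreasing intersection of closed essential annuli it has essential annular neighborhoods of arbitrarily small diameter; fix one, $A'$, with piecewise-linear boundary, so that (writing $\til A'=\pi^{-1}(A')$) $\cK\subseteq\inter\til A'\subseteq\til A'\subseteq B_{\rho_0/2}(\cK)$, and let $\til V_+,\til V_-$ be the two components of $\cA\sm\til A'$, with $\R\times\{1\}\subseteq\til V_+$ and $\R\times\{0\}\subseteq\til V_-$. A short check gives $\til V_\pm=\cU_\pm\sm\til A'$, so $\til V_\pm\cap\til A'=\emptyset$. Since $\til A'$ is tame and $1$-periodic, there is $R_1\ge R_0$, depending only on $A'$ and $R_0$, such that every point of $\til V_+$ with $\abs{\pr_1}\le R_0$ can be joined to $\R\times\{1\}$ by a path inside $\til V_+$ whose first coordinate stays in $[-R_1,R_1]$, and symmetrically for $\til V_-$ and $\R\times\{0\}$. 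Finally choose $\delta>0$ with $B_\delta(\cK)\subseteq\til A'$ (possible since $\cK$ lies in the interior of $\til A'$), and put $M=R_1+1$.

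\emph{The contradiction.} Let $C\subseteq B_\delta(\cK)$ be a continuum with $[-M,M]\subseteq\pr_1(C)$, and suppose $d(z_*,C)\ge\epsilon$ for some $z_*\in\bd\cK$ with $\abs{\pr_1(z_*)}\le N$; then $C\cap B_\epsilon(z_*)=\emptyset$. Take $a^\pm$ as above for $z=z_*$. As $\til A'\subseteq B_{\rho_0/2}(\cK)$ while $d(a^\pm,\cK)\ge\rho_0$, we get $a^\pm\notin\til A'$, hence $a^+\in\cU_+\sm\til A'=\til V_+$ and $a^-\in\til V_-$; moreover $a^\pm\in\{\abs{\pr_1}\le R_0\}$ by the choice of $R_0$. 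Join $a^+$ to $\R\times\{1\}$ inside $\til V_+$ and $a^-$ to $\R\times\{0\}$ inside $\til V_-$ by paths with first coordinate in $[-R_1,R_1]$, and join $a^+$ to $a^-$ by a path inside $B_{\epsilon/2}(z_*)$. Concatenating these and extracting a simple arc gives an arc $\Gamma$ from $\R\times\{1\}$ to $\R\times\{0\}$ with $\pr_1(\Gamma)\subseteq[-R_1,R_1]$; and $\Gamma\cap C=\emptyset$, because the two outer pieces lie in $\til V_\pm$, hence miss $\til A'\supseteq B_\delta(\cK)\supseteq C$, while the middle piece lies in $B_{\epsilon/2}(z_*)$, which misses $C$. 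Capping $\cA$ off at top and bottom by the vertical rays above and below the endpoints of $\Gamma$ turns $\Gamma$ into a Jordan curve in $S^2$, so $\cA\sm\Gamma$ has two pieces, one containing $\{z:\pr_1(z)<-R_1\}$ and the other containing $\{z:\pr_1(z)>R_1\}$. Since $C$ is connected and avoids $\Gamma$ it lies in a single piece, yet it meets both $\{\pr_1=-M\}$ and $\{\pr_1=M\}$ because $[-M,M]\subseteq\pr_1(C)$ and $M>R_1$ --- contradiction. Hence $\bd\cK\cap([-N,N]\times[0,1])\subseteq B_\epsilon(C)$.

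\emph{Main obstacle.} The delicate point is the uniform control in the second step. The uniform inner radius $\rho_0$ is exactly where the circloid hypothesis is used --- for a general annular continuum a point of $\bd\cK$ need not be accessible from both sides. More delicate is the existence of a thin essential annular neighborhood $\til A'$ whose complementary halves $\til V_\pm$ carry ``escape paths'' of horizontal extent bounded independently of $C$; taking $\til A'$ tame reduces this to a compactness statement on one fundamental domain, but one could instead first conjugate $\til A'$ to the round annulus $\R\times[\tfrac{1}{3},\tfrac{2}{3}]$ by an equivariant homeomorphism isotopic to the identity (which automatically has bounded horizontal displacement) and run the whole argument there.
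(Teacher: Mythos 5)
Your proof is correct, but it takes a genuinely different route from the paper's. The paper argues by a compactness argument: assuming the conclusion fails for $\delta=1/n$, $M=n$, it passes to a Hausdorff limit $C'$ of the bad continua $C_n$ in the one-point compactification of $\cA$, shows that the limit $C=C'\sm\{\infty\}$ is a closed subset of $\cK$ separating $\cU_+$ from $\cU_-$, and invokes the circloid hypothesis in the form ``any closed subset of $\cK$ separating $\cU_+$ from $\cU_-$ contains $\bd\cK$'' (the characterization $\bd\cK=\bd\cU_+\cap\bd\cU_-$) to contradict the accumulation of the witness points $z_n$. You instead argue directly with a single continuum $C$, using the circloid hypothesis through the equivalent accessibility form $\bd\cK=\bd\cU_+=\bd\cU_-$: a lower-semicontinuity/compactness argument produces the uniform inner radius $\rho_0$, a thin periodic essential annular neighborhood $\til{A}'$ together with escape paths of uniformly bounded horizontal extent (obtained, as you note, by equivariantly conjugating $\til{A}'$ to a round strip with bounded horizontal displacement) produces a separating arc $\Gamma$ disjoint from $C$ and of bounded horizontal extent, and Jordan separation in the strip then contradicts $[-M,M]\subset\pr_1(C)$. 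The paper's route is shorter and cleaner; yours is more quantitative (it effectively exhibits $\delta$ and $M$ in terms of $\epsilon$, $N$, $\rho_0$ and the geometry of a fixed tame annulus) and more clearly isolates the structural feature of circloids being used, namely two-sided accessibility of the boundary. Both proofs break down, as they must, for a general annular continuum; in the paper's version because the limit $C$ need not contain all of $\bd\cK$, and in yours because a point of $\bd\cU_+\sm\bd\cU_-$ has no nearby $a^-$.
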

\begin{proof}

Suppose for a contradiction that for each $n>0$ there exists a continuum $C_n\subset B_{1/n}(\cK)$ such that $[-n,n]\subset \pr_1(C_n)$ and some point $z_n\in \bd \cK\cap([-N,N]\times [0,1])\sm B_\epsilon(C_n)$. If $C_n'$ denotes the set $C_n\cup \{\infty\}$ in the one-point compactification $\R\times [0,1]\cup \{\infty\}$, then there is a subsequence of $(C_n')_{n\in \N}$ which converges to some continuum $C'\subset \cK\cup\{\infty\}$ in the Hausdorff topology. The closed set $C=C'\sm\{\infty\}\subset \cK$ must separate the two boundary components of $\cA$, since otherwise there would exist a compact arc $\gamma$ connecting the two boundary components of $\cA$ disjoint from $C$, contradicting the fact that $C_n\cap \gamma\neq \emptyset$ for all sufficiently large $n$. Thus, $\cU_-$ and $\cU_+$ are contained in different connected components of $\cA\sm C$. Since $K$ is a circloid, and $C\subset \cK$, it follows that $\bd \cK = \bd\cU_+\cap \bd \cU_-\subset C$. This contradicts the fact that the sequence $(z_n)_{n\in\N}$ has a limit point in the compact set $\bd \cK\cap ([-N,N]\times [0,1])\sm B_\epsilon(C)$.
\end{proof}

We will use the following 
\begin{proposition}[{\cite[Theorem 14.3]{newman1992elements}}]\label{pro:newman-separator}  If two points on the plane are separated by a closed set, then they are also separated by some connected component of the closed set.
\end{proposition}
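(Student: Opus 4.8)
The plan is to pass to the sphere so that the separator becomes compact, use Zorn's lemma to extract a \emph{minimal} closed set separating $x$ from $y$, and then show that minimality forces this set to be connected; the connected component of $F$ containing it is then the one we want. The only genuinely two-dimensional input is Janiszewski's theorem (equivalently, the unicoherence of $\SS^2$; see \cite{newman1992elements}): if $M,N$ are closed subsets of $\SS^2$ with $M\cap N$ connected and neither $M$ nor $N$ separates two given points, then $M\cup N$ does not separate them either. Everything else is soft point-set topology.

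First I would compactify. Let $\SS^2=\R^2\cup\{\infty\}$ and let $\ol F$ be the closure of $F$ in $\SS^2$, so that $\ol F$ equals $F$ or $F\cup\{\infty\}$ and is in any case compact; inspecting which component of $\R^2\sm F$ contains $\infty$ (the unbounded one, when $F$ happens to be bounded) shows that $x$ and $y$ are still separated by $\ol F$ in $\SS^2$. It then suffices to produce a connected set $C\subset F$ separating $x$ from $y$ in $\R^2$, because the component of $F$ containing such a $C$ will separate $x$ from $y$ as well: enlarging a separating set keeps it separating.

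Next I would run the minimality argument. Order the closed subsets of $\ol F$ that separate $x$ from $y$ in $\SS^2$ by reverse inclusion. Given such a chain, its intersection $G_*$ is closed and still separates: otherwise $x$ and $y$ would lie in a common component $W$ of the open set $\SS^2\sm G_*$, which is open and arcwise connected by local connectedness of $\SS^2$, so there would be an arc $\gamma\subset W$ joining $x$ to $y$; being compact and disjoint from $G_*=\bigcap_G G$, the arc $\gamma$ would be disjoint from some member $G$ of the chain (the sets $\gamma\cap G$ being a nested family of compacta with empty intersection), contradicting that $G$ separates $x$ from $y$. Zorn's lemma then produces a minimal element $G_0$.

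Finally I would show that $G_0\sm\{p\}$ is connected for every $p\in G_0$ — in particular that $G_0$ itself is connected. If $G_0\sm\{p\}$ split into two nonempty disjoint relatively open pieces, then taking their closures in $G_0$ would write $G_0=M\cup N$ with $M,N$ proper closed subsets and $M\cap N\subset\{p\}$ connected; by minimality of $G_0$ neither $M$ nor $N$ separates $x$ from $y$, so Janiszewski's theorem would force $G_0=M\cup N$ not to separate $x$ from $y$, a contradiction (the same reasoning with no point removed gives connectedness of $G_0$). It remains to land inside $F$: if $\infty\notin G_0$ then $G_0\subset F$ is the desired connected separator; if $\infty\in G_0$, then $G_0\sm\{\infty\}$ is connected by the above, is contained in $F$, and still separates $x$ from $y$ in $\R^2$, since an arc in $\R^2$ avoiding it would avoid all of $G_0$. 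In either case the connected component of $F$ containing this connected separator separates $x$ from $y$. I expect the substantive point to be Janiszewski's theorem — the one place where being on a sphere rather than, say, a torus matters — while the only delicate bit of bookkeeping is the point at infinity, dealt with by the observation that a minimal separator in $\SS^2$ remains connected after the removal of any single point.
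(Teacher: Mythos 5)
The paper does not prove Proposition~\ref{pro:newman-separator}; it is cited verbatim as Theorem~14.3 of Newman's book, so there is no ``paper's own proof'' to compare against. On its own merits, your proof is correct and follows one of the standard routes to this fact: compactify to $\SS^2$, use Zorn's lemma to extract a minimal closed separator $G_0\subset\ol F$, and then invoke Janiszewski's theorem (equivalently, unicoherence of $\SS^2$) to show that minimality forces $G_0$ --- and even $G_0\setminus\{p\}$ for any $p\in G_0$ --- to be connected; finally discard $\infty$ if it was picked up. This is a cleaner, more modern argument than what Newman actually does in the cited book, where the result is reached through a combinatorial ``grating'' machinery built up over several chapters; your version isolates the single essential two-dimensional input (Janiszewski) and hides everything else in soft point-set topology and Zorn.

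One small point of order worth tightening: in the step where $G_0\setminus\{p\}$ is supposed to be disconnected and you write $G_0=\ol A\cup\ol B$, this identity requires $p$ not to be isolated in $G_0$ (otherwise $\ol A\cup\ol B=G_0\setminus\{p\}$ and $M\cap N=\emptyset$, but $M\cup N\neq G_0$). The cleanest fix is to first run the argument with no point removed --- $M\cap N=\emptyset$, Janiszewski with empty intersection --- to get that $G_0$ is connected; since a singleton cannot separate $\SS^2$, $G_0$ then has at least two points, a connected set with $\geq 2$ points has no isolated points, and the removal-of-$p$ argument goes through. As you wrote it, the parenthetical remark ``the same reasoning with no point removed gives connectedness of $G_0$'' contains exactly this observation, but it is used logically before it is established; reordering removes the gap. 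This is cosmetic rather than substantive --- the argument is essentially right.
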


\section{Dynamical definitions and previous results}

Denote by $\pi\colon \cA=\R\times [0,1]\to \A$ the universal covering map $(x,y)\mapsto (x+\Z,y)$, and let $T\colon (x,y)\mapsto (x+1,y)$ be the covering translation.
For the remainder of this section, we fix a homeomorphism $f\colon \A\to \A$ isotopic to the identity and a lift $F\colon \cA\to \cA$ (so $FT=TF$). 

\subsection{Rotation sets and intervals}
Let $X\subset \A$ be a compact $f$-invariant set, and $\cX=\pi^{-1}(X)$.
The \de{inferior/superior rotation number of a point} $z\in \mc X$, denoted by $\rho_\infe(F,z)$ and $\rho_\supe(F,z)$, are defined as the limsup/liminf as $n\to \infty$ of $\pr_1(F^n(z)-z)/n$. The \de{rotation interval of a point} $z\in \cX$ is the interval $$\ol{\rho}(F,z) = [\rho_\infe(F,z), \rho_\supe(F,z)].$$
 When this interval is reduced to a point, its unique element is called the \de{rotation number of the point} $z$, denoted by $\rho(F,z) = \lim_{n\to \infty} \pr_1(F^n(z)-z)/n$. In this case we say that $z$ has a well-defined rotation number.

Since the inferior/superior rotation numbers of $z$ remain unchanged if one replaces $z$ by $T^n(z)$, it is meaningful to define the corresponding numbers for an element $z\in X$, by letting $\rho_\infe(F,z) = \rho_\infe(F,\tilde{z})$ where $\tilde z\in \pi^{-1}(z)$ is arbitrary, and similarly for $\rho_\supe(F,z)$, $\ol{\rho}(F,z)$ and $\rho(F,z)$ if the latter exists.

The \de{rotation number of an $f$-invariant Borel probability $\mu$} supported in $X$ is the number $$\rho(F,\mu)= \int \phi_F\, d\mu,$$ where $\phi_F\colon \A \to \R$ is the \de{displacement function} defined by $\phi_F(z) = \pr_1(F(\tilde z)-\tilde z)$ for $\tilde z\in \pi^{-1}(z)$. Note that for $z\in \cX$ one has $$\frac{F^n(z)-z}{n} = \frac{1}{n}\sum_{i=0}^{n-1}\phi_F(f^i(\pi(z))),$$ so Birkhoff's Ergodic Theorem implies that $\mu$-almost every $z$ has a well-defined rotation number $\rho(F,z)$,  and $\rho(F,\mu)=\int \rho(F,z)\, d\mu(z)$. Moreover, if $\mu$ is ergodic, then $\rho(F,z)=\rho(F,\mu)$ for $\mu$-almost every $z$.

The \de{rotation set} of $F$ in $X$ is the set $\rho(F,X)$ of all rotation numbers of points of $X$ with well-defined rotation number.
The \de{inferior/superior rotation numbers} $\rho_\infe(F,X)$ and $\rho_\supe(F,X)$ are the infimum and the supremum of $\rho(F,X)$, respectively. The \de{rotation interval} of $f$ in $X$ is $\ol{\rho}(F,X)=[\rho_\infe(F,X), \rho_\supe(F,X)]$.

Let $\cM(f|_X)$ denote the set of $f$-invariant Borel probability measures supported on $X$, and $\cM_e(f|_X)$ its ergodic elements. By well known arguments (relying on the fact that the space of invariant Borel probabilities is a convex set whose extremal points are ergodic),
$$\ol{\rho}(F,X) = \{\rho(F,\mu): \mu\in \cM(f|_X)\}$$
and
\begin{equation}\label{eq:measure}
\{\rho_\infe(F,X), \rho_\supe(F,X)\}\subset \{\rho(F,\mu): \mu\in \cM_e(f|_X)\} \subset \rho(F,X);
\end{equation}
see \cite{MR980794}, or \cite[Corollary 2.5]{MR1053617} for a version of this result on the torus which is easily adapted to our setting.
Moreover, if $(z_n)_{n\in \N}$ is a sequence in $\cX$ then $$\rho_\infe(F,X)\leq \liminf_{n\to \infty} \pr_1(F^n(z_n)-z_n)/n \leq \limsup_{n\to \infty} \pr_1(F^n(z_n)-z_n)/n \leq \rho_\supe(F,X).$$
From these facts one has that that the convex hull of $\rho(f,X)$ is a closed interval, and if $\rho(f,X)$ is a singleton $\{\alpha\}$ then every point of $K$ has a well-defined rotation number $\alpha$ (and the limit in the definition converges uniformly).

It follows from the definitions that for any pair of integers $n,k$,
\begin{itemize}
\item $\ol{\rho}(T^kF^n,z) = n\ol{\rho}(F,z) + k$ for all $z\in X$;
\item $\rho(T^kF^n,\mu) = n\rho(F,\mu) + k$ for any $\mu\in \mc{M}(f|_{X})$.
\end{itemize}

The following simple observation is also useful (see for instance \cite{MR980794}). Denote by $\omega_f(z)$ the $\omega$-limit set of $z$, \ie the set of all accumulation points of the sequence $(f^n(z))_{n\in \N}$.
\begin{proposition}\label{pro:omega} For any $z\in K$, one has $\ol{\rho}(F,z) \subset \ol{\rho}(F,\omega_f(z))$.
\end{proposition}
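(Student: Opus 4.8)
The plan is to establish the two endpoint inequalities $\rho_\infe(F,\omega_f(z))\le\rho_\infe(F,z)$ and $\rho_\supe(F,z)\le\rho_\supe(F,\omega_f(z))$; since by definition $\ol\rho(F,z)=[\rho_\infe(F,z),\rho_\supe(F,z)]$ and likewise for $\omega_f(z)$, these two inequalities are exactly the asserted inclusion. I would prove $\rho_\supe(F,z)\le\rho_\supe(F,\omega_f(z))$ in detail; the other inequality is entirely symmetric (or follows by applying the same reasoning to $F^{-1}$). Throughout, fix a lift $\tilde z\in\pi^{-1}(z)$ and recall that the displacement function $\phi_F$ is continuous on the compact annulus $\A$, hence bounded, and that $\pr_1(F^n(\tilde z)-\tilde z)=\sum_{i=0}^{n-1}\phi_F(f^i(z))$ for every $n$; in particular $\rho_\supe(F,z)=\limsup_n\tfrac1n\pr_1(F^n(\tilde z)-\tilde z)$ is a finite number.

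Write $Y=\omega_f(z)$, a nonempty compact $f$-invariant subset of $K$. The first step is a routine Krylov--Bogolyubov argument. Pick $n_k\to\infty$ with $\tfrac1{n_k}\pr_1(F^{n_k}(\tilde z)-\tilde z)\to\rho_\supe(F,z)$, and consider the empirical measures $\mu_{n_k}=\tfrac1{n_k}\sum_{i=0}^{n_k-1}\delta_{f^i(z)}$ on $\A$. Passing to a subsequence we may assume that $\mu_{n_k}$ converges weakly to some Borel probability $\mu$; then $\mu$ is $f$-invariant, and continuity of $\phi_F$ gives $\rho(F,\mu)=\int\phi_F\,d\mu=\lim_k\int\phi_F\,d\mu_{n_k}=\lim_k\tfrac1{n_k}\pr_1(F^{n_k}(\tilde z)-\tilde z)=\rho_\supe(F,z)$.

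The second step is to show that $\mu$ is supported on $Y$. Here one uses the standard fact (valid because $\A$ is compact) that the orbit of $z$ eventually enters and stays in any neighborhood of its $\omega$-limit set: for each $\epsilon>0$ there is $N_\epsilon$ with $f^i(z)\in B_\epsilon(Y)$ for all $i\ge N_\epsilon$, whence $\mu_n(\ol{B_\epsilon(Y)})\ge 1-N_\epsilon/n$ for all $n$. Since $\ol{B_\epsilon(Y)}$ is closed, the portmanteau theorem gives $\mu(\ol{B_\epsilon(Y)})\ge\limsup_k\mu_{n_k}(\ol{B_\epsilon(Y)})=1$, so $\mu(\ol{B_\epsilon(Y)})=1$ for every $\epsilon>0$; letting $\epsilon=1/m\to0$ and using continuity of the measure from above yields $\mu(Y)=1$. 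Thus $\mu\in\cM(f|_Y)$, and therefore $\rho_\supe(F,z)=\rho(F,\mu)\in\ol\rho(F,Y)=[\rho_\infe(F,Y),\rho_\supe(F,Y)]$, giving $\rho_\supe(F,z)\le\rho_\supe(F,Y)$ as wanted. Running the symmetric argument with the $\liminf$ supplies the other inequality and completes the proof.

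I do not expect any genuine obstacle here: the paper itself flags the statement as a ``simple observation'', and the argument above is standard. The only point requiring a little care is the support computation in the second step. One must apply the portmanteau inequality to the \emph{closed} set $\ol{B_\epsilon(Y)}$ (using $\limsup_k\mu_{n_k}(E)\le\mu(E)$ for closed $E$) rather than to its open complement, where the inequality runs the wrong way; and one must invoke the elementary fact that an orbit in a compact space is eventually contained in every neighborhood of its $\omega$-limit set, so that the portion of the empirical measure sitting away from $Y$ has mass $O(1/n)$.
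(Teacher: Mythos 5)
Your argument is correct, and it is the standard one that the cited reference (Franks) intends: take empirical measures along the subsequence realizing $\rho_\supe(F,z)$, extract a weak-$*$ limit $\mu$ (Krylov--Bogolyubov), verify via the portmanteau inequality on closed sets that $\mu$ is supported on $\omega_f(z)$, and then invoke the identity $\ol\rho(F,Y)=\{\rho(F,\mu):\mu\in\cM(f|_Y)\}$ stated earlier in the section to place $\rho_\supe(F,z)=\rho(F,\mu)$ inside $\ol\rho(F,\omega_f(z))$; the $\liminf$ endpoint is handled symmetrically. The paper does not spell out a proof, so there is nothing to contrast with, but your handling of the two genuinely subtle points---using the closed-set direction of portmanteau and the eventual-containment of the orbit in neighborhoods of its $\omega$-limit set---is exactly right, and the step concluding $\rho(F,\mu)\le\rho_\supe(F,Y)$ is covered by the paper's earlier displayed identity (or, equivalently, by ergodic decomposition plus Birkhoff).
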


\subsection{Prime ends rotation numbers}\label{sec:prime-ends}

Suppose that $K\subset \inter \A$ is an essential $f$-invariant continuum. If we consider the sphere $\A^*\simeq \SS^2$ obtained by collapsing the lower and upper boundary components of $\A$ to points $-\infty$ and $\-\infty$, respectively, and the dynamics induced by $f$ fixing these two points, then defining $U_+$ and $U_-$ as in Section \ref{sec:continua}, the sets $U_+^*=U_+\cup\{\infty\}$ and $U_-^*=U_-\cup \{-\infty\}$ are invariant open topological disks, and they have a prime ends compactification $\til{U}_\pm^*\simeq \ol{\D}$ which is a disjoint union of $U_\pm^*$ with a topological circle (see \cite{MR3314477,MR662863}).
Lifting the inclusion $U_\pm \to \til{U}_\pm^*\sm\{\pm\infty\}$ to the universal cover, one obtains a homeomorphism $p_+\colon \cU_+ \to H_+ := \{(x,y):y>0\}$ and a homeomorphism $F_+\colon \ol{H}_+\to \ol{H}_+$ such that $p_+F|_{\cU_+} = F_+p_+$ and $F_+T = TF_+$. 
Similarly, there are maps  $p_-\colon \cU_-\to H_-:=\{(x,y):y<0\}$ and $F_-\colon \ol{H}_-\to\ol{H}_-$ such that $p_-F|_{\cU_-} = F_-p_-$ and $F_- T = T F_-$. 

The \de{upper/lower (prime ends) rotation numbers of the lift} $F$ in $K$ is then defined as 
$$\rho^\pm(F,K)  = \lim_{n\to \infty} (\pr_1 F_\pm^n(x,0)-x)/n,$$
which is independent of $x$.
As usual, for $n,k\in\Z$ one has
$$\rho^\pm(T^kF^n,K) = n\rho^\pm(F,K)+k$$

\subsection{Dynamics on annular continua}

Fix an homeomorphism $f\colon \A\to \A$ and a lift $F\colon\cA\to \cA$, and suppose that $K\subset \inter \A$ is an essential $f$-invariant continuum.
As mentioned in the introduction, Handel proved the following:
\begin{theorem}[\cite{MR1037109}]\label{th:handel-closed} If $K$ is a closed topological annulus, then the conclusions of Theorem \ref{th:handel-gen} hold. 
\end{theorem}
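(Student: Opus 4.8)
The plan is to reconstruct Handel's argument from \cite{MR1037109}, organizing it around the \emph{extreme}, the \emph{rational}, and the \emph{remaining} rotation numbers. The extreme values $\rho_\infe(F,\A)$, $\rho_\supe(F,\A)$ are already covered by \eqref{eq:measure}: they are realized by ergodic measures on $\A$. For the rest, the first reduction is via the scaling identities $\ol{\rho}(T^kF^n,z)=n\ol{\rho}(F,z)+k$ and $\rho(T^kF^n,\mu)=n\rho(F,\mu)+k$ noted above: every assertion about a rational $p/q\in\rho(F,\A)$ reduces, after replacing $F$ by $G=T^{-p}F^q$, to the case $\alpha=0$, and similarly one may always place the value under study at a convenient spot of the rotation interval.

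\textbf{Rational realization.} Given $0\in\rho(F,\A)$, one must produce $z$ with $F(\tilde z)=\tilde z$ for some lift $\tilde z$, equivalently a $(p,q)$-periodic orbit for the original value $p/q$. This is the topological heart. Handel's original proof goes through Brouwer--Nielsen theory: if $F$ were fixed-point free on $\cA$ and commuting with $T$, one analyzes a maximal isotopy from the identity to $f$ together with the associated Brouwer lines and shows that such an $F$ forces a definite drift---every orbit of $F$ that has a rotation number has it of a single strict sign---so that $0\in\rho(F,\A)$ is impossible. From a current viewpoint this step is cleanest via Le Calvez's equivariant foliated version of the Brouwer translation theorem: a transverse nonsingular foliation of $\inter\A$ with no closed essential leaf yields cross-cuts joining the two boundary circles that are crossed monotonically, hence positive drift, while a closed essential leaf is incompatible with invariance of the two boundary components. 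In the area-preserving case this is exactly Franks' generalized Poincar\'e--Birkhoff theorem \cite{MR967632}.

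\textbf{Closedness and ergodic realization.} Let $\alpha$ be a limit of a sequence $\alpha_n\in\rho(F,\A)$, witnessed by points $z_n$ with $\rho(F,z_n)=\alpha_n$. Averaging the displacement $\phi_F$ along long orbit segments of the $z_n$ and extracting a diagonal weak-$*$ limit produces an $f$-invariant $\mu$ with $\rho(F,\mu)=\alpha$; by \eqref{eq:measure} this only gives $\alpha\in\ol{\rho}(F,\A)$, and the substantive claim is that $\alpha$ is attained by an \emph{ergodic} measure, equivalently that $\{\rho(F,\mu):\mu\in\cM_e(f|_{\A})\}$ (a priori only the image of the extreme points under the affine map $\mu\mapsto\rho(F,\mu)$) is already closed and equals $\rho(F,\A)$. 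Here the topological input of the previous step is reused: a ``gap'' in the ergodically realized set would be separated by an $f$-invariant essential region across which the dynamics drifts, forcing $\rho$ to be locally constant there and contradicting $\alpha$ being a limit from $\rho(F,\A)$; hence no gaps, and the set is closed. Combined with Proposition \ref{pro:omega} (passing from a point's rotation interval to that of its $\omega$-limit set), this gives (1) and (2).

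\textbf{Compact realizing sets, and the main obstacle.} For rational $\alpha$ the periodic orbit from the first step serves as $Q_\alpha$. For irrational $\alpha$ one cannot simply take the support of the ergodic $\mu$ above, since it may carry points of other rotation numbers; instead one builds an Aubry--Mather / Birkhoff-attractor type set, totally ordered by the transverse foliation, all of whose points have rotation number $\alpha$. This succeeds for every $\alpha$ except those sitting at an endpoint of a maximal nondegenerate subinterval on which the construction degenerates; such endpoints form a discrete set, which is the exceptional set in (3) (whether it can be nonempty is unknown, cf.\ the remark after Theorem \ref{th:handel-gen} and \cite{MR2038197}). The hard part throughout is the topological core of the first and last steps: understanding fixed-point-free annulus homeomorphisms precisely enough---via Brouwer/Nielsen theory or equivariant transverse foliations---to convert ``rotation number $\alpha$ with no $(p,q)$-periodic orbit'' into a contradiction and to organize the sets $Q_\alpha$; once that structure is in hand the ergodic-theoretic and measure-decomposition steps are comparatively soft.
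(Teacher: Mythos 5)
The paper does not actually prove this statement --- it is quoted as a known theorem of Handel, and the relevant machinery appears only in Section 5 where the paper \emph{adapts} Handel's argument from $\A$ to general annular continua. Comparing your sketch to that adapted argument reveals that you are not reconstructing Handel's proof: you are gesturing at a different strategy and, at the crucial steps, the gesture does not amount to a proof.

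The central technical idea in Handel's argument (mirrored in Propositions \ref{pro:cases}--\ref{pro:pseudo} and Lemma \ref{lem:rel-shadow} of this paper) is a dichotomy: for each $\alpha$ in the closure of $\rho(F,\A)$, either a nice realizing set/measure already exists, or $\alpha$ satisfies the \emph{pA-hypothesis}. In the latter case one invokes Thurston--Nielsen classification to produce, for some iterate $f^n$ and finite invariant set $R$, a homeomorphism $\phi$ pseudo-Anosov relative to $R$ and isotopic to $f^n$ rel $R$, with $n\alpha$ in the interior of $\rho(\hat\phi)$. One then builds the realizing sets $Q_\alpha$ by selecting admissible sequences in a Markov partition for $\phi$ (Proposition \ref{pro:pseudo} and its Addendum) and pulls them back to $f$ via global shadowing for maps isotopic to pseudo-Anosovs (Proposition \ref{pro:shadow-uniform}, Proposition \ref{pro:semi}, Lemma \ref{lem:rel-shadow}). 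Both the closedness of the rotation set and the existence of compact realizing sets come out of this pA-plus-shadowing mechanism, and the exceptional discrete set in (3) is exactly the set of values where case (2) of the dichotomy holds but neither (1) nor (3) does.

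Your proposal contains none of this. You describe the rational realization step via Brouwer lines / Le Calvez foliations (a valid but different and later technology for that \emph{one} step, essentially giving Franks' simpler proof of the Poincar\'e--Birkhoff part rather than Handel's). For closedness you offer only the heuristic that ``a gap would be separated by an invariant essential region across which the dynamics drifts, forcing $\rho$ to be locally constant'' --- this is not an argument, and nothing in the Brouwer foliation picture produces such a region from a mere gap in $\{\rho(F,\mu):\mu\ \text{ergodic}\}$. Most seriously, for the compact realizing sets $Q_\alpha$ you appeal to ``an Aubry--Mather / Birkhoff-attractor type set, totally ordered by the transverse foliation.'' Without the twist condition there is no such ordered structure, and no construction of this type exists in the generality of Theorem \ref{th:handel-closed}; Handel's Markov-partition construction inside a relative pseudo-Anosov representative, together with global shadowing to transfer it back to $f$, is precisely the replacement for Aubry--Mather theory. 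Your description of the exceptional set (``endpoints of a maximal nondegenerate subinterval on which the construction degenerates'') likewise does not match where that set actually comes from in the dichotomy. In short, the reduction via $T^kF^n$ and the extreme-value remark are fine, but the heart of the theorem --- Thurston--Nielsen theory and global shadowing --- is missing, and the substitutes you propose either address only the rational case or are not actually constructions.
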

Recall that a $(p,q)$-periodic point is a point $z$ such that for $\til{z}\in\pi^{-1}(z)$ one has $F^q(\til{z})=T^p(\til{z})$. Note that Theorem \ref{th:handel-closed} includes the fact that every rational $p/q$ in the rotation set is realized by $(p,q)$-periodic point.
A simpler proof of this fact (in the closed annulus) was given by Franks \cite[Corollary 2.5]{MR967632}.  
The following result by Barge and Gillette provides a similar result when $K$ is a cofrontier:
\begin{theorem}[{\cite{MR1145613}}]\label{th:bg-cofrontier} If $K$ is a cofrontier then
\begin{itemize}
\item[(1)] If $\rho(F,K)$ is not a singleton, then $K$ is an indecomposable continuum;
\item[(2)] Every rational $p/q$ in the rotation interval $\ol{\rho}(F,K)$ is realized by a $(p,q)$-peridoic point in $K$;
\item[(3)] The prime ends rotation numbers belong to the rotation interval: 
$$\{\rho^+(F,K), \rho^-(F,K)\}\subset \ol{\rho}(F,K).$$
\end{itemize}
\end{theorem}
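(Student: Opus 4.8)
The plan is to prove the three parts in the order (2), (3), (1), with (2) carrying the real content.

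\emph{Part (2).} Fix a rational $p/q\in\ol\rho(F,K)$ and put $G=T^{-p}F^q$. Since $\ol\rho(T^{-p}F^q,z)=q\,\ol\rho(F,z)-p$, the hypothesis becomes $0\in\ol\rho(G,K)=[\rho_\infe(G,K),\rho_\supe(G,K)]$, and any $\til z\in\cK:=\pi^{-1}(K)$ with $G(\til z)=\til z$ projects to a $(p,q)$-periodic point of $f$ in $K$. So it suffices to show that a lift $G$ of a homeomorphism of $\A$ isotopic to the identity which preserves a cofrontier $K$ and has $0\in[\rho_\infe(G,K),\rho_\supe(G,K)]$ fixes some point of $\cK$. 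First, $\cK$ is connected: by Proposition~\ref{pro:newman-separator} some connected component $C$ of $K$ separates $U_+$ from $U_-$ in $\A^*\simeq\SS^2$ (equivalently, $C$ is essential), so $\pi^{-1}(C)$ is a connected closed subset of $\cK$ separating $\cU_+$ from $\cU_-$; since $K$ is a cofrontier this forces $\pi^{-1}(C)\supseteq\bd\cK=\cK$ (Section~\ref{sec:continua}), whence $\cK=\pi^{-1}(C)$ is connected. Now assume $G$ has no fixed point in $\cK$. An easy first consequence: the horizontal displacement $\phi_G$ has a zero on $K$. Indeed, on the connected set $K$ a nowhere-zero $\phi_G$ would have constant sign, say $\phi_G\ge c>0$; summing $\phi_G$ along orbits (which stay in $K$) would give $\pr_1(G^n\til z-\til z)\ge cn$ for all $\til z\in\cK$, so $\rho_\infe(G,K)\ge c>0$, contrary to $0\in\ol\rho(G,K)$ (and symmetrically for $\phi_G\le -c<0$); hence $\inf_K|\phi_G|=0$, and by compactness $\phi_G$ vanishes somewhere on $K$. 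The real step is to upgrade this to a genuine fixed point, and the key structural fact is that a cofrontier is a \emph{compact} invariant set and therefore carries invariant measures: by \eqref{eq:measure} the extreme rotation numbers $\rho_\infe(G,K)\le0$ and $\rho_\supe(G,K)\ge0$ are realized by ergodic measures on $K$, hence, by Poincar\'e recurrence, by $f$-recurrent points $z_-,z_+\in K$. Lifting $z_\pm$ to $\cK$, their $G$-orbits return arbitrarily close to integer translates of themselves, with net horizontal displacement of non-positive, respectively non-negative, asymptotic rate. A fixed-point-free $G$ should be incompatible with this by a Brouwer plane translation theorem argument on $\A^*\simeq\SS^2$: there $(f^q)^*$ is isotopic to the identity and fixes $\pm\infty$, so under the no-fixed-point hypothesis it has no fixed point on a planar neighbourhood of $K$, whence every point of that neighbourhood is wandering for it, contradicting the recurrence of $z_\pm$ together with $0\in[\rho_\infe(G,K),\rho_\supe(G,K)]$. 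Making this last step rigorous --- that an invariant cofrontier cannot carry a lift with $0$ in its rotation interval and no fixed point --- is the main obstacle; it is precisely here that the thinness of $K$ (empty interior, so that $\cK$ is a one-dimensional-like separator with controlled complementary components) plays the role that area-preservation plays in the classical Poincar\'e--Birkhoff theorem. One cannot shortcut via Handel's Theorem~\ref{th:handel-closed}, as $K$ need not admit invariant essential annular neighbourhoods. When $p/q$ is interior to $\ol\rho(F,K)$ one even obtains genuinely left- and right-moving recurrent points in $\cK$ and the argument is of Franks type; realizing the endpoints of the rotation interval is the subtle case.

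\emph{Part (3).} The plan is to transport the prime-ends rotation number $\rho^+(F,K)$ to an honest orbit inside $\cK$. Accessible points are dense in $\bd U_+=K$, so pick an accessible $\til x\in\cK$; it determines a prime end $\mathfrak p$ of $\cU_+$, carried by the $T$-equivariant prime-ends uniformization to a point $\xi$ of the lifted prime-ends circle $\partial H_+$. Since $F(\cU_+)=\cU_+$ and this uniformization conjugates the induced $F$-action on the prime ends of $\cU_+$ to $F_+$, the point $F^n(\til x)\in\cK$ is again accessible and its prime end is carried to $F_+^n(\xi)$. Restricted to the prime-ends circle the uniformization is a $T$-equivariant homeomorphism of $\R$, hence moves points a bounded amount; combined with the compactness of a $T$-fundamental domain of the prime-ends compactification, this yields a bound $|\pr_1(F^n\til x)-\pr_1 F_+^n(\xi)|\le C$ uniform in $n$. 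As $\pr_1 F_+^n(\xi)-\xi\sim n\,\rho^+(F,K)$, it follows that $\pr_1(F^n\til x-\til x)/n\to\rho^+(F,K)$, so $\til x$ is a point of $\cK$ with well-defined rotation number $\rho^+(F,K)$, i.e.\ $\rho^+(F,K)\in\rho(F,K)\subset\ol\rho(F,K)$; $\rho^-(F,K)$ is handled identically using $p_-$. The delicate point is the uniform bound $|\pr_1(F^n\til x)-\pr_1 F_+^n(\xi)|\le C$, i.e.\ that the lifted prime-ends circle sits over $\cK$ with bounded oscillation; this has to be extracted from standard properties of the prime-ends compactification, with genuine care needed when $K$ is indecomposable --- then the impression of a prime end may be all of $K$, and one must work with accessing (principal) points rather than with impressions.

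\emph{Part (1).} Suppose $K$ is decomposable, $K=A\cup B$ with $A,B$ proper subcontinua. By the circloid characterization in Section~\ref{sec:continua}, $\bd K=K$ contains no proper essential subcontinuum, so neither $A$ nor $B$ separates the two ends of $\A$; being inessential continua, $A$ and $B$ each lie in a topological disk in $\inter\A$, and since $A\cup B=K$ is essential these two disks must overlap. The plan is to leverage this --- and, lifting, the cyclic-chain-of-disks structure it forces around $\cK$ --- to show that $K$ is ``circle-like'' for the dynamics: the two prime-ends rotation numbers coincide, and every point of $K$ has a well-defined rotation number equal to that common value, so $\rho(F,K)$ is a singleton. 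The contrapositive is assertion (1). The content here is the structure theorem for decomposable cofrontiers, after which the classical circle case of Poincar\'e's theory applies directly; this is the piece I would need to develop most carefully, but it is entirely independent of the hard analysis underlying (2).
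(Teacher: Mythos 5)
This statement is quoted by the paper from Barge--Gillette \cite{MR1145613} and is not proved there; the closest thing to a proof in the paper is Section~4, which establishes the generalization of part~(2) to circloids (Theorem~\ref{th:reali-rational2}) and cites Matsumoto (Theorem~\ref{th:matsumoto-prime}) for part~(3). Measured against that, your proposal has genuine gaps in all three parts, two of which you yourself flag.

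\emph{Part (2).} The plan correctly reduces to showing that $0\in\ol\rho(G,K)$ forces a fixed point in $\cK$, and the observation that $\cK$ is connected and $\phi_G$ vanishes somewhere on $K$ is fine, but these are the trivial parts. The Brouwer step, as stated, does not work: after extending to a fixed-point-free $G$ on $\cA$, Brouwer theory gives that every point is \emph{wandering in the universal cover} $\cA$, but your recurrent points $z_\pm$ are only recurrent in $\A$; a recurrent point with rotation number $0$ can still drift to infinity sublinearly in $\cA$, so there is no contradiction yet. Also, invoking the extension to a fixed-point-free map requires knowing $\rho^\pm(G,K)\ne 0$, which already rests on Theorem~\ref{th:bg-reali-prime}. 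The paper's proof for circloids closes exactly this gap via a much more elaborate chain: Lemma~\ref{lem:franks-chain} (no fixed point in $\cK$ $\Rightarrow$ no periodic $\epsilon$-chain), the dichotomy $\cK=\cK^+\cup\cK^-$, Lemma~\ref{lem:circloid-subcontinua}, density and disjointness of $\cK_0^\pm$ in $\bd\cK$, and finally Baire category. None of this machinery is foreshadowed in your sketch, and the Brouwer/recurrence heuristic by itself would give a shorter proof that, as far as I can see, does not exist.

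\emph{Part (3).} The entire argument hangs on the claimed bound $|\pr_1(F^n\til x)-\pr_1 F_+^n(\xi)|\le C$ for an accessible point $\til x$ and a prime end $\xi$ accessing it, and the justification offered (``the uniformization restricted to the prime-ends circle is a $T$-equivariant homeomorphism of $\R$'' plus compactness of the prime-ends compactification modulo $T$) does not produce it: the prime-ends circle of $\cU_+$ is abstract, the map from prime ends to principal/impression points is only upper semicontinuous as a set-valued map, and for indecomposable $K$ (precisely the interesting case) the impression of a prime end can be all of $K$, whose lift is unbounded in $\pr_1$. If the bound were true, \emph{every} accessible-from-$\cU_+$ point would have rotation number equal to $\rho^+(F,K)$, which is a far stronger and, to my knowledge, false statement. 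Matsumoto's theorem (and the Hern\'andez-Corbato proof) establishes the inclusion $\rho^\pm(F,K)\in\ol\rho(F,K)$ by a genuinely different route, not by shadowing a single accessible orbit. You flag this as ``the delicate point,'' but it is in fact the entire content of the theorem.

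\emph{Part (1).} What you give is a plan, not a proof; the ``structure theorem for decomposable cofrontiers'' is the theorem, and deducing it from the overlapping-disks picture is nontrivial. (It is also worth noting that the paper's Theorem~2.9, from \cite{jager-koro}, gives a stronger version for circloids with decomposable boundary, again by methods not indicated in your sketch.) So overall the proposal correctly identifies what must be shown but does not close any of the three arguments, and in parts (2) and (3) the heuristic offered at the key step would not, as written, lead to a proof.
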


The last part of Theorem \ref{th:bg-cofrontier} was generalized to annular continua by Matsumoto \cite{MR2869068} (with an alternative proof due to Hern\'andez-Corbato \cite{corbato}):
\begin{theorem}[{\cite{MR2869068}}]\label{th:matsumoto-prime} If $K$ is an annular continuum, then its upper and lower prime ends rotation numbers $\rho^\pm(F,K)$ belong to the rotation interval $\ol{\rho}(F,K)$.
\end{theorem}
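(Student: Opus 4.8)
The plan is to realize $\rho^+(F,K)$ directly as the rotation number of a point of $K$, so that $\rho^+(F,K)\in\rho(F,K)\subseteq\ol{\rho}(F,K)$; the case of $\rho^-$ is symmetric (interchange the two boundary components of $\A$, which turns $\rho^-$ into the ``upper'' prime ends rotation number and preserves the rotation interval). The structural fact I would exploit is that the conjugacy $p_+\colon\cU_+\to H_+$ commutes with the covering translation $T$, so the ``$\pr_1$-distortion'' $\psi:=\pr_1\circ p_+-\pr_1\colon\cU_+\to\R$ is invariant under $z\mapsto z+(1,0)$; moreover $\psi$ extends over the circle of prime ends and stays invariant there. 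Since $F_+$ restricted to $\bd\ol{H}_+$ is a lift of a circle homeomorphism whose rotation number is, by definition, $\rho^+(F,K)$, Poincar\'e's theory gives $\pr_1 F_+^n(\theta)-\theta=n\,\rho^+(F,K)+O(1)$ uniformly in $\theta\in\bd\ol{H}_+$ and $n$. The idea is that this estimate transfers to a genuine displacement estimate inside $\cK$ as soon as $\psi$ is bounded along the orbit of a suitable boundary prime end.

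Concretely, I would choose a prime end $\mathfrak p$ of $U_+^*$ of the \emph{first kind}, i.e.\ one whose impression is a single point; by Carath\'eodory's theory of prime ends such prime ends are dense in the circle of prime ends. In the universal cover, write $\tilde z(\theta)\in\bd\cU_+$ for the unique point of the impression of the prime end $\theta$. Since $F_+$ and $T$ are homeomorphisms of the circle of prime ends preserving the cardinality of impressions, they permute the first-kind prime ends, and by naturality of the prime ends compactification $\tilde z(F_+\theta)=F(\tilde z(\theta))$ and $\tilde z(\theta+1)=T\tilde z(\theta)$. The essential point is then that $\psi$ is \emph{bounded} on the set of first-kind prime ends: the assignment $\theta\mapsto\tilde z(\theta)$ respects the natural circular order (distinct point-impressions cannot cross, and a full turn around the circle of prime ends corresponds to a full turn around the core of $\A$), so in the universal cover $\theta\mapsto\pr_1\tilde z(\theta)$ is weakly monotone and satisfies $\pr_1\tilde z(\theta+1)=\pr_1\tilde z(\theta)+1$; being monotone it is bounded on $[0,1]$, and since $\psi(\theta)=\theta-\pr_1\tilde z(\theta)$ is $1$-periodic it is bounded everywhere, by a constant depending only on $K$.

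Granting this, the conclusion is immediate: for a first-kind prime end $\theta$ with point $\tilde z=\tilde z(\theta)\in\bd\cU_+\subseteq\cK$,
\[
\pr_1 F^n(\tilde z)-\pr_1\tilde z=\bigl(\pr_1 F_+^n(\theta)-\theta\bigr)-\bigl(\psi(F_+^n\theta)-\psi(\theta)\bigr)=n\,\rho^+(F,K)+O(1),
\]
so $\tilde z$ has a well-defined rotation number $\rho(F,\tilde z)=\rho^+(F,K)$; since $\pi(\tilde z)\in K$, this gives $\rho^+(F,K)\in\rho(F,K)\subseteq\ol{\rho}(F,K)$.

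The main obstacle — and the reason this is a substantial theorem rather than the short argument above — is that first-kind prime ends need not be dense: if $\bd U_+$ is hereditarily indecomposable (for instance a pseudo-circle) then \emph{every} prime end has impression equal to all of $\bd U_+$, so point-impressions are unavailable. In that generality one must run the same scheme with the \emph{principal set} of an arbitrary prime end in place of $\tilde z(\mathfrak p)$ and control its position in $\cA$ by choosing the defining crosscuts thin in $H_+$ and invoking the winding constraint to keep their $p_+$-preimages bounded (an arc of $\cU_+$ arising from a non-winding crosscut of $H_+$ cannot enclose an unbounded region, since $\cU_+$ is simply connected and $p_+$ is $T$-equivariant of degree one). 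Making the order-preservation and the uniform boundedness of these principal sets rigorous, and separately treating the degenerate case where the boundary-circle dynamics is periodic (where one instead extracts a fixed or periodic accessible point of $f$ in $K$ directly), is precisely the technical content of Matsumoto's proof (and of Hern\'andez-Corbato's alternative). A parallel, contradiction-based route in the spirit of Barge--Gillette is available when $\rho^+$ is rational: after twisting by a suitable $T^kF^q$ and passing to a power one arranges $\rho_\supe(F,K)<0=\rho^+(F,K)$ and $\phi_F<0$ on $\bd U_+$, whence a fixed prime end appears whose $F$-invariant impression, if bounded in $\cA$, contains a point where $\pr_1$ is minimized and therefore where $\phi_F\ge0$ — a contradiction; once again the essential (unbounded-impression) case is where the real work lies.
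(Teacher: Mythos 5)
This statement is cited in the paper as Matsumoto's theorem (with an alternative proof attributed to Hern\'andez--Corbato) and is not proved there, so there is no in-paper argument to compare yours against; I can only assess the proposal on its own completeness.

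On that score the gap is real, and you have already located it yourself. Your construction lives entirely on the set of first-kind prime ends, and that set can be empty: when $\bd U_+$ is hereditarily indecomposable --- the pseudo-circle case, which this paper emphasizes as exactly the class of pathological continua these results must handle --- every prime end has impression equal to all of $\bd U_+$, so $\tilde z(\theta)$ is undefined for every $\theta$ and the argument never starts. You acknowledge this and gesture at replacing $\tilde z(\theta)$ by a principal point of an arbitrary prime end with ``thin crosscuts'', but that replacement is the whole theorem: the heart of Matsumoto's proof is precisely to produce, without any density of accessible points, a bounded $T$-equivariant correspondence between prime-end orbits and orbits in $\cK$ along which the $\pr_1$-discrepancy stays uniformly controlled. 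Nothing in your sketch explains why the relevant principal sets stay within bounded $\pr_1$-distance of the orbit of $\theta$ when the impression is large, and that control is exactly what does not come for free. As written, what you have is a proof of the special case where first-kind prime ends are dense in the circle of prime ends (for instance when $\bd U_+$ is locally connected).

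A secondary issue, even within the first-kind case: the weak monotonicity of $\theta\mapsto\pr_1\tilde z(\theta)$ is asserted from ``distinct point-impressions cannot cross'', but this is not an observation --- the map $\theta\mapsto\tilde z(\theta)$ is generally not injective (one boundary point can be the impression of several prime ends), so order-preservation of the positions has to be extracted from a crosscut argument using the $T$-equivariance of $p_+$ and the simple connectivity of $\cU_+$. It is true, and it is in the spirit of the degree-one remark you make at the end, but it needs to be proved rather than cited as evident. I would advise either restricting the claim to the locally connected case and saying so, or reworking the argument around principal sets from the outset as Matsumoto and Hern\'andez--Corbato do.
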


In the case that one of the prime ends rotation numbers is rational, Barge and Gillette also showed that it must be realized by a periodic point even when $K$ is an arbitrary annular continuum:

\begin{theorem}[\cite{MR1158867}]\label{th:bg-reali-prime} If $K$ is an annular continuum and either the upper or the lower prime ends rotation number is a rational $p/q$, then there is a $(p,q)$-periodic orbit in $K$.
\end{theorem}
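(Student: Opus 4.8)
The plan is to reduce the statement to the existence of a fixed point (in the universal cover) of a single auxiliary map, and then to extract that fixed point from the prime ends dynamics. By symmetry assume that it is the \emph{upper} prime ends rotation number that equals $p/q$. Finding a $(p,q)$-periodic point in $K$ is equivalent to finding a fixed point in $\cK=\pi^{-1}(K)$ of the lift $G:=T^{-p}F^q$ of $f^q$: indeed $G(\til z)=\til z$ means $F^q(\til z)=T^p(\til z)$, so $\pi(\til z)$ is a $(p,q)$-periodic point, and it lies in $K$ exactly when $\til z\in\cK$. Using $\rho^+(T^kF^n,K)=n\rho^+(F,K)+k$ (Section \ref{sec:prime-ends}) one gets $\rho^+(G,K)=q(p/q)-p=0$. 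Thus it suffices to prove the normalized statement: if $G$ is a lift of $f^q$ with $\rho^+(G,K)=0$, then $G$ has a fixed point in $\cK$.

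Next I would feed this into the prime ends machinery of Section \ref{sec:prime-ends}. Let $p_+\colon\cU_+\to H_+$ and $F_+\colon\ol H_+\to\ol H_+$ be as there, and set $G_+:=T^{-p}F_+^q$, the corresponding lifted extension of $G$ (it satisfies $p_+\,G|_{\cU_+}=G_+\,p_+$ and $G_+T=TG_+$). The restriction of $G_+$ to $\partial H_+=\R\times\{0\}$ is a lift to $\R$ of an orientation preserving circle homeomorphism whose rotation number is $\rho^+(G,K)=0$, so by Poincar\'e's theory it has a fixed point: there is $x_0\in\R$ with $G_+(x_0,0)=(x_0,0)$. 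Equivalently, $G$ fixes the prime end $\mathfrak p$ of $U_+^*$ represented by $(x_0,0)$. (Passing from $f$ to $f^q$ is done precisely to convert a rational prime ends rotation number into a genuinely \emph{fixed} prime end rather than a merely periodic one; this is the single place where rationality of $p/q$ enters.)

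The decisive step is to turn the fixed prime end $\mathfrak p$ into an honest fixed point of $G$ in $\bd\cU_+\subset\cK$. Let $Z\subset\bd_\A U_+\subset K$ be the impression of $\mathfrak p$ and let $\til Z\subset\bd\cU_+$ be the lift of $Z$ obtained as the impression computed in the cover; then $\til Z$ is a continuum and, since $G$ fixes $\mathfrak p$, $G(\til Z)=\til Z$. If $\til Z$ is a single point, that point is fixed and we are done. If $\til Z$ is non-degenerate, inessential in $\A$ (hence compact), and does not separate the plane, then $\til Z$ is a compact non-separating $G$-invariant continuum, and the Cartwright--Littlewood fixed point theorem, applied to an extension of $G$ to $\R^2$, yields a fixed point of $G$ in $\til Z$. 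The hard case — which I expect to be the technical heart of the argument — is when the impression is essential in $\A$, or otherwise separates the plane; here no direct argument works, since a homeomorphism of an invariant continuum that separates need not have a fixed point (a rotation, say), so one must exploit more of the prime ends structure. I would argue by contradiction: if $G$ had no fixed point in $\cK$, then $G|_{\cU_+}$ would be a fixed-point-free orientation preserving homeomorphism of the plane $\cU_+\cong\R^2$, hence a Brouwer homeomorphism; combining this with the fact that the extension $G_+$ fixes the boundary point $(x_0,0)$, a fixed point index (or linking) computation for $G$ in a neighborhood of $\bd\cU_+$ — in the spirit of the Cartwright--Littlewood analysis of prime ends, or of the Conley index arguments behind Theorem \ref{th:matsumoto-prime} — would show that this index cannot vanish, forcing a fixed point of $G$ in $\bd\cU_+$ and contradicting the assumption. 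Carrying out this index computation despite the possibly pathological topology of the impression is the crux, and is the content of the original argument of Barge--Gillette \cite{MR1158867}.
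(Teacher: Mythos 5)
The paper does not prove Theorem~\ref{th:bg-reali-prime}; it is quoted as a black box from Barge and Gillette \cite{MR1158867} and used as a tool (notably in Lemma~\ref{lem:franks-chain} and Theorem~\ref{th:nobirkhoff}). There is therefore no internal proof to compare your attempt against.

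On the merits, your reduction to the normalized lift $G=T^{-p}F^q$ with $\rho^+(G,K)=0$ and the extraction of a fixed prime end via Poincar\'e's theory on the prime-end circle are both correct. The genuine gap is exactly where you place it: the passage from a $G_+$-fixed point on $\partial H_+$ to an honest fixed point of $G$ in $\cK$. Your three-way split handles the point impression trivially and the inessential non-separating impression via Cartwright--Littlewood (here inessentiality of $Z$ does guarantee that the lift $\tilde Z$ is compact with disjoint $T$-translates, so that part is sound in outline). But for the ``hard case'' you offer only a one-sentence heuristic about a linking/index computation ``in a neighborhood of $\bd\cU_+$'' and then defer to ``the content of the original argument of Barge--Gillette.'' That is circular: the essential/separating-impression case \emph{is} the theorem. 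It is also not a corner case --- for precisely the pathological continua the theorem is designed to handle (indecomposable boundaries, the Birkhoff attractor, pseudo-circles), the impression of a typical prime end is essential, often all of $\bd U_+$, so the cases you do handle cover only a restricted subclass. Moreover, the proposed ``index in a neighborhood of $\bd\cU_+$'' is not directly available: in the cover $\bd\cU_+$ is unbounded, there is no compact isolating neighborhood a priori, and without an a priori bounded orbit one cannot invoke Brouwer-theoretic recurrence the way Proposition~\ref{pro:brouwer-free} is invoked elsewhere in the paper. To close the gap one needs the actual Barge--Gillette argument, which works with the full continuum $K$ and its two complementary domains rather than with a single impression, together with a fixed-prime-end-to-fixed-point theorem in that setting; this is substantive work, not a detail.
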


In the area-preserving setting, we have the following additional result due to Franks and Le Calvez, which can be seen as an improved version of the Poincar\'e-Birkhoff theorem:

\begin{theorem}[{\cite[Proposition 5.4]{MR1971199}}]\label{th:flc-reali} If $f$ is area-preserving and $K$ is an annular continuum, every rational in $\ol{\rho}(F,K)$ is realized by a periodic point in $K$.
\end{theorem}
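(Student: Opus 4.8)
The plan is to reduce to a fixed-point statement, apply Franks' version of the Poincaré--Birkhoff theorem for area-preserving homeomorphisms of the open annulus, and then force the resulting fixed point into $K$ by modifying $f$ away from $K$. Given $p/q\in\ol{\rho}(F,K)$, set $G:=T^{-p}F^{q}$; this is a lift of $g:=f^{q}$, which is again area-preserving and isotopic to the identity, with $K$ still invariant, and $\ol{\rho}(G,K)=q\,\ol{\rho}(F,K)-p\ni 0$. A point $\tilde z\in\cK$ with $G(\tilde z)=\tilde z$ satisfies $F^{q}(\tilde z)=T^{p}(\tilde z)$, hence projects to a $(p,q)$-periodic point of $f$ in $K$. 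So it suffices to prove: if $h$ is an area-preserving homeomorphism of $\A$ isotopic to the identity, $H$ a lift, and $K\subset\inter\A$ an $h$-invariant annular continuum with $0\in\ol{\rho}(H,K)$, then $H$ has a fixed point in $\cK$. If $\rho^{+}(H,K)=0$ or $\rho^{-}(H,K)=0$ this already follows from Theorem~\ref{th:bg-reali-prime}, so we may assume $\rho^{\pm}(H,K)\neq 0$.

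Since $\ol{\rho}(H,K)=\{\rho(H,\mu):\mu\in\cM(h|_{K})\}$, fix an $h$-invariant probability $\mu$ supported on $K$ with $\rho(H,\mu)=\int\phi_{H}\,d\mu=0$. Franks' version of the Poincaré--Birkhoff theorem for area-preserving homeomorphisms of the open annulus \cite{MR967632}---which produces a fixed point of the lift as soon as it admits an invariant probability of rotation number zero, the extremal case being handled via Atkinson's ergodic lemma applied to the zero-mean function $\phi_{H}$---then yields a point $\tilde z_{0}\in\inter\cA$ with $H(\tilde z_{0})=\tilde z_{0}$. The difficulty is that a priori $\tilde z_{0}$ need not lie in $\cK$.

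To remedy this we argue by contradiction: suppose $H$, hence $h$, has no fixed point in $K$; by compactness $h$ is fixed-point free on some connected open neighborhood $W$ of $K$ in $\A$. The complementary components $U_{+},U_{-}$ of $\A\sm K$ are $h$-invariant, and each $U_{\pm}\cap\inter\A$ is an open essential annulus having one end that accumulates on $\partial U_{\pm}\subset K$---along which the prime end rotation number of $h$ is $\rho^{\pm}(H,K)\neq 0$---while the other end is a boundary circle of $\A$. The plan is to modify $h$ inside $(U_{+}\cup U_{-})\cap\inter\A$, away from $K$ and leaving it unchanged on a (possibly smaller, still connected) neighborhood of $K$, so as to obtain an area-preserving homeomorphism $\tilde h$ of $\A$, isotopic to the identity, that is fixed-point free on $\inter\A\sm K$: near the $K$-end of each $U_{\pm}$ one keeps $h$, and through an area-preserving collar one transitions to a rigid irrational rotation near the boundary circle, choosing its rotation number so that no fixed point is created along the way---this is where $\rho^{\pm}(H,K)\neq 0$ enters. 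Since $\tilde h=h$ has no fixed point on $K$ either, $\tilde h$ is then fixed-point free on all of $\inter\A$. On the other hand $\mu$ is still $\tilde h$-invariant, and for the lift $\tilde H$ of $\tilde h$ agreeing with $H$ near $\cK$ one has $\rho(\tilde H,\mu)=\int\phi_{H}\,d\mu=0$; so by Franks' theorem again $\tilde H$ must have a fixed point in $\inter\cA$, which projects to a fixed point of $\tilde h$ in $\inter\A$---a contradiction. This establishes the reduced statement, and with it the theorem.

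The main obstacle is the construction of the modified homeomorphism $\tilde h$: one must carry out the area-preserving interpolation inside each open annulus $U_{\pm}\cap\inter\A$ so that it agrees with $h$ near $K$, agrees with a rigid rotation near the boundary circle, has the correct flux so as to be genuinely area-preserving, and---crucially, using $\rho^{\pm}(H,K)\neq 0$ together with the fact that the fixed points of $h$ in $\inter\A$ lie in a compact subset of $U_{+}\cup U_{-}$ bounded away from $K$---is fixed-point free in between. Everything else---the reduction, the choice of $\mu$, and the two appeals to Franks' theorem---is routine.
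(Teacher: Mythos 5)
The paper cites this result from Franks--Le~Calvez (their Proposition~5.4) without reproving it, so there is no in-paper argument to compare against; I assess your proof on its own terms.

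Your reduction --- replacing $F$ by $T^{-p}F^{q}$, noting that a fixed point of the new lift in $\cK$ gives the desired periodic orbit, and dispatching the case $\rho^{+}(H,K)=0$ or $\rho^{-}(H,K)=0$ via Theorem~\ref{th:bg-reali-prime} --- is correct. The problem is the step you flag as ``the main obstacle'' but then dismiss as a technical interpolation: the construction of an \emph{area-preserving} $\tilde h$ agreeing with $h$ near $K$ and fixed-point-free on $\inter\A\sm K$. This is not a routine collar argument; if it is achievable in this generality at all, it is the entire content of the theorem. Compare with the paper's Proposition~\ref{pro:flc-extension}, which is exactly this type of extension and comes from the same Franks--Le~Calvez paper (their Lemma~5.1): that proposition makes \emph{no} claim of area preservation, and the usual construction (interpolating with a rigid rotation in prime-end coordinates) manifestly does not preserve Lebesgue measure. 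To keep area preservation you would need to destroy a closed (possibly wild, non-isolated, even positive-measure) fixed-point set of $h$ inside each $U_{\pm}$ by a compactly supported area-preserving perturbation away from $K$, while matching a flux constraint across the transition collar. Even a single nondegenerate elliptic fixed point cannot be removed locally (its index is $+1$), and cancelling it area-preservingly against a saddle is precisely the kind of delicate conservative surgery the literature treats as a theorem, not a remark.

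The deeper issue is that the approach is misdirected: area preservation of the \emph{modified} map is not what is needed. An argument consistent with the machinery already in this paper keeps the extension non-area-preserving: apply Proposition~\ref{pro:flc-extension} to get $G$ fixed-point-free on $\cA$ and agreeing with $F$ near $\cK$, then invoke Franks' lemma on fixed-point-free plane homeomorphisms (exactly as in the proof of Lemma~\ref{lem:franks-chain}) to conclude that $F|_{\cK}$ has no recurrent (indeed no chain recurrent) points. Area preservation of the \emph{original} $f$ is then used to manufacture a recurrent point of $F$ in $\cK$ --- via ergodic decomposition, Atkinson's lemma applied to an ergodic measure on $K$ realizing rotation number zero or an extremal value, and Poincar\'e recurrence --- giving the contradiction. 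In short, the hypothesis on $f$ should be used to produce recurrence inside $\cK$, not to relicense Franks' Poincar\'e--Birkhoff theorem on a surgically altered map. As written, your proof reduces a cited theorem to an unproved and, I believe, at least equally hard construction.
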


This result can be improved considerably using Lemma \ref{lem:franks-chain} ahead. Indeed, using that lemma in place of \cite[Lemma 2.1]{MR967632} one obtains improved versions of Theorem 2.2 and Corollary 2.4 from the latter article (with their proofs otherwise unchanged):
\begin{theorem}\label{th:CR-reali} If $K$ is an annular continuum then:
\begin{itemize}
\item If a compact invariant set $X\subset K$ is chain transitive for $f|_K$, then every rational $p/q\in \ol{\rho}(F,X)$ is realized by a periodic point in $K$. 
\item If every point of $K$ is $f|_K$-chain recurrent, then every rational in $\ol{\rho}(F,K)$ is realized by a periodic point in $K$.
\end{itemize}
\end{theorem}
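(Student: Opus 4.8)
The plan is to follow the strategy of Franks \cite{MR967632}, replacing the key combinatorial input (the existence of a ``Franks chain'' forcing a periodic point) by the stronger Lemma \ref{lem:franks-chain}, so that the argument works inside an annular continuum rather than a closed topological annulus. First I would reduce to the case $p/q=0$ by replacing $F$ with $T^{-p}F^q$; this is legitimate because $\ol{\rho}(T^{-p}F^q, X) = q\,\ol{\rho}(F,X)-p$ contains $0$, and a fixed point of $T^{-p}F^q$ in $K$ is exactly a $(p,q)$-periodic point of $F$ in $K$. So it suffices to show: if $X\subset K$ is a compact chain transitive invariant set with $0\in \ol{\rho}(F,X)$, then $F$ has a fixed point in $K$.

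Next I would argue by contradiction: assume $F$ has no fixed point in $\cK=\pi^{-1}(K)$. The displacement function $\phi_F$ is continuous on $\A$, and since $0\in\ol{\rho}(F,X)$ one has (by the inequalities relating $\ol{\rho}(F,X)$ to Birkhoff sums, and to $\rho_\infe,\rho_\supe$) points of $X$ whose forward Birkhoff averages of $\phi_F$ are arbitrarily close to $0$ from both sides — more precisely, there are points with $\rho_\infe(F,\cdot)\le 0\le \rho_\supe(F,\cdot)$, or one can produce finite pseudo-orbit segments along which $\phi_F$ sums are positive and others along which they are negative. Using chain transitivity of $f|_X$ one stitches these segments together into a single closed $\varepsilon$-pseudo-orbit in $X$ along which the total $\pr_1$-displacement (computed in $\cA$) is exactly $0$, i.e.\ a ``Franks chain'' for the lift $F$ relative to a suitable open cover of a neighborhood of $K$ in $\A$. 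This is the standard mechanism in \cite[Theorem 2.1]{MR967632}; the chain transitivity hypothesis is precisely what replaces an area-preserving (hence chain recurrent) assumption. For the second bullet, chain recurrence of every point of $K$ gives, for each rational in $\ol{\rho}(F,K)$, a point whose rotation interval contains that rational (using that the extrema $\rho_\infe(F,K),\rho_\supe(F,K)$ are attained and Proposition \ref{pro:omega}), and then one restricts to the chain transitive components to invoke the first bullet.

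Finally I would invoke Lemma \ref{lem:franks-chain} (the annular-continuum replacement for \cite[Lemma 2.1]{MR967632}): a closed Franks chain with zero total displacement, lying in a neighborhood of the annular continuum $K$, forces $F$ to have a fixed point inside $K$ — not merely in the ambient annulus. This contradicts our assumption and completes the proof; since the lemma is cited as an input, the rest of the argument is then a line-by-line transcription of the proofs of \cite[Theorem 2.2, Corollary 2.4]{MR967632} with $K$ in place of the annulus. The main obstacle is making sure the Franks chain one builds genuinely stays in (a neighborhood of) $K$ so that Lemma \ref{lem:franks-chain} applies with the fixed point \emph{in $K$}: one must take the open cover used to define the pseudo-orbits to be by sets whose closures lie in an arbitrarily small annular neighborhood of $K$, shrinking the neighborhood as the contradiction hypothesis (no fixed point in $\cK$) provides a uniform lower bound $\inf_{\cK}\abs{F(z)-z}>0$ via compactness; everything else — the bookkeeping of displacements, the ergodic-theoretic extraction of positive/negative segments, the chain-transitive splicing — is routine and borrowed verbatim from Franks.
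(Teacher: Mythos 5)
Your overall mechanism for the first bullet is the right one, and it matches what the paper intends: reduce to $p/q=0$ via $T^{-p}F^q$, assume for contradiction that $F$ has no fixed point in $\cK$, build a closed $\varepsilon$-pseudo-orbit of zero lifted displacement inside the chain transitive set $X\subset K$ by splicing long orbit segments of negative and positive mean displacement with $\varepsilon$-chains supplied by chain transitivity, and then contradict Lemma~\ref{lem:franks-chain}. That is exactly ``Franks's Theorem 2.2 with Lemma~\ref{lem:franks-chain} in place of \cite[Lemma 2.1]{MR967632}.''

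Two points need fixing, though. First, your last paragraph works hard to ensure the Franks chain stays in a small annular \emph{neighborhood} of $K$, and appeals to $\inf_{\cK}\abs{F(z)-z}>0$. This worry is misdirected: Lemma~\ref{lem:franks-chain} requires the periodic $\varepsilon$-chain to live in $\cK$ itself, not in a neighborhood, and your chain does live there automatically because you construct it inside $X\subset K$. There is no need for an open cover of a neighborhood of $K$ or any shrinking argument — the chain is in $K$ by construction, and that is precisely the hypothesis the lemma needs. Second, and more seriously, your argument for the second bullet has a gap. You assert that ``chain recurrence of every point of $K$ gives, for each rational in $\ol{\rho}(F,K)$, a point whose rotation interval contains that rational,'' citing the attainment of the extrema and Proposition~\ref{pro:omega}. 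But the extrema $\rho_\infe(F,K)$, $\rho_\supe(F,K)$ are realized by possibly different ergodic measures supported on possibly different chain transitive sets; nothing you cite produces a single point $z$ with $p/q\in\ol{\rho}(F,z)$, and such a point need not exist. The fact that actually reduces the second bullet to the first is that $K$ is a \emph{continuum} (compact and connected): if every point of a compact connected space is chain recurrent for a homeomorphism, the map is chain transitive on the whole space (otherwise a complete Lyapunov function in the sense of Conley would be nonconstant with nowhere dense image on $K$, forcing an attractor--repeller pair that would disconnect $K$). Once you know $f|_K$ is chain transitive, the second bullet is just the first bullet applied with $X=K$. Restate the second step that way and the argument is complete.
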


Note that the chain recurrence hypothesis cannot be removed; for instance, one may easily produce an example where $K$ is a closed annulus such that $\rho(F,K) =\{-1/2,1/2\}$. On the other hand Theorem \ref{th:bg-reali-prime} says that when $K$ is a cofrontier, the chain recurrence is unnecessary. One may wonder whether the same is true for the more general case where $K$ is an annular continuum with empty interior. The answer is no; see for instance Walker \cite[Example B]{MR992609}. 

The Birkhoff attractor \cite{MR951271} provides an example of an invariant essential cofrontier $K$ such that $\rho^-(F,K)\neq \rho^+(F,K)$. An example with similar properties where $K$ is a pseudo-circle is given in \cite{Boronski:2015qy}. The next result, which is a corollary of the main theorem from {\cite{MR3314477}}, shows that there is no area-preserving analogue of the Birkhoff attractor\footnote{The author is grateful to T. J\"ager for pointing this out.}

\begin{theorem}\label{th:nobirkhoff} If $K$ is an annular continuum with empty interior and $f$ is area-preserving, then $\rho(F,K)$ is a singleton and its unique element is $\rho^+(F,K) = \rho^-(F,K)$. 
\end{theorem}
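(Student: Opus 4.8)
The plan is to reduce the statement to the main theorem of \cite{MR3314477} by passing to the sphere. First I would form $\A^*\simeq\SS^2$ by collapsing the two boundary circles of $\A$ to points $\pm\infty$ as in Section~\ref{sec:prime-ends}, obtaining the induced homeomorphism $f^*\colon\A^*\to\A^*$ fixing $\pm\infty$, together with the two $f^*$-invariant open topological disks $U_+^*=U_+\cup\{\infty\}$ and $U_-^*=U_-\cup\{-\infty\}$. Their complements $\A^*\sm U_\pm^*=K\cup\{\mp\infty\}$ are continua with more than one point, and $\bd_{\A^*}U_\pm^*$ coincides with $\bd_\A U_\pm$. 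Since $K\subset\inter\A$ has empty interior, every point of $K$ is a limit of points of $U_+\cup U_-$, hence lies in $\bd_\A U_+\cup\bd_\A U_-$, so that
\[
K=\bd_\A U_+\cup\bd_\A U_-=\bd_{\A^*}U_+^*\cup\bd_{\A^*}U_-^*.
\]
Finally, as the collapsed circles carry zero area, $f^*$ preserves the pushforward of the area form, which is a Borel probability of full support on $\A^*$ (the fixed points $\pm\infty$ lie in the support, being limits of sets of positive area). Thus $f^*$ together with each of the disks $U_\pm^*$ meets the hypotheses of \cite{MR3314477}.

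The key input I would then extract from \cite{MR3314477} is the following rigidity statement: every $z\in\bd_{\A^*}U_\pm^*$ for which $\rho(F,z)$ is defined has $\rho(F,z)=\rho^\pm(F,K)$. Granting this, the decomposition of $K$ above immediately yields $\rho(F,K)\subseteq\{\rho^+(F,K),\rho^-(F,K)\}$, and $\rho(F,K)$ is nonempty since the compact invariant set $K$ carries an invariant measure. To finish, suppose for contradiction that $\rho^+(F,K)\ne\rho^-(F,K)$ and pick a rational $p/q$ strictly between them. By Theorem~\ref{th:matsumoto-prime} both prime ends rotation numbers lie in $\ol{\rho}(F,K)$, so $p/q\in\ol{\rho}(F,K)$; since $f$ is area-preserving, Theorem~\ref{th:flc-reali} provides a $(p,q)$-periodic point in $K$, giving $p/q\in\rho(F,K)\subseteq\{\rho^+(F,K),\rho^-(F,K)\}$ --- impossible, as $p/q$ was chosen strictly between. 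Hence $\rho^+(F,K)=\rho^-(F,K)$, and $\rho(F,K)$, being a nonempty subset of the one-point set $\{\rho^+(F,K)\}$, equals $\{\rho^+(F,K)\}=\{\rho^-(F,K)\}$.

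The main obstacle is precisely the ``key input'' above. The main theorem of \cite{MR3314477} is naturally stated in terms of the prime ends rotation number of $U_\pm^*$ --- and, when that number is rational, in terms of a periodic point of $\bd U_\pm^*$ accessible from $U_\pm^*$ --- whereas what the argument needs is control of $\rho(F,z)$ for \emph{every} point of $\bd U_\pm^*$, in particular ruling out that $\bd U_\pm^*$ carries an inaccessible periodic point, or an invariant measure, of rotation number different from $\rho^\pm(F,K)$. This is exactly the point at which the area-preserving hypothesis is indispensable (and where a Birkhoff attractor, failing to preserve area, escapes the conclusion); the remaining steps, including the verification that $f^*$ preserves a full-support measure, are routine.
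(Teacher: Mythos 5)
You have correctly identified the gap yourself: the ``key input'' --- that every $z\in\bd U_\pm$ with a defined rotation number satisfies $\rho(F,z)=\rho^\pm(F,K)$ --- is considerably stronger than what \cite{MR3314477} actually supplies. That result pins down only the \emph{periodic} points of $\bd U_\pm$ against the prime ends rotation number (and then via accessibility); control of $\rho(F,\cdot)$ on all of $\bd U_\pm$, or of arbitrary invariant measures supported there, is essentially what this paper's Corollary~\ref{cor:main} establishes, so invoking such rigidity here would be circular or would need an independent argument.

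The paper's proof uses the same decomposition $K=\bd U_+\cup\bd U_-$ and the same external inputs (Matsumoto's Theorem~\ref{th:matsumoto-prime}, Franks--Le Calvez's Theorem~\ref{th:flc-reali}), but it needs \cite{MR3314477} only at the level of fixed points of a single lift, which sidesteps your gap. Assume $\rho(F,K)$ is not a singleton, so $\ol\rho(F,K)$ is a nondegenerate interval. Choose $n>1$ and $k\neq 0$ so that $\ol\rho(T^kF^n,K)=n\ol\rho(F,K)+k$ contains $0$ in its interior while simultaneously $n\rho^\pm(F,K)+k\neq 0$; this is possible because $\rho^\pm$ are just two values inside a nondegenerate interval and can be avoided. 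Theorem~\ref{th:flc-reali} applied to the area-preserving $f^n$ produces a fixed point of the lift $T^kF^n$ in $K=\bd U_+\cup\bd U_-$, but the main result of \cite{MR3314477} (see also \cite[Cor.~2.7]{MR3175156}) forbids any such fixed point in $\bd U_\pm$, since the corresponding prime ends rotation numbers of $T^kF^n$ are nonzero on both sides --- contradiction. In short, if you replace your ``key input'' by the weaker and available statement that a $(p,q)$-periodic point of $f$ in $\bd U_\pm$ forces $\rho^\pm(F,K)=p/q$, your argument closes the gap and becomes essentially the paper's proof; the rest of your preparation (the passage to the sphere, the full-support pushforward measure, and the identity $K=\bd U_+\cup\bd U_-$) is correct as written.
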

\begin{proof}
We know that $\rho^\pm(F,K)\in \ol{\rho}(F,K)$ from Theorem \ref{th:matsumoto-prime}. If $\rho(F,K)$ is not a singleton, then $\ol{\rho}(F,K)$ is a nonsingular interval, so we may find integers $k\neq 0$ and $n>1$ such that  $\ol{\rho}(T^kF^n, K) = n\ol{\rho}(F,K)+k$ contains $0$ in its interior. Since $T^kF^n$ is a lift of $f^n$ which is area-preserving, Theorem \ref{th:flc-reali} implies that $f^n$ has a fixed point $z_0\in K$. 
But $k,n$ may be chosen such that $\rho^\pm(T^kF^n,K) = n\rho^\pm(F,K)+k\neq 0$, and in that case the main result from \cite{MR3314477} (see also \cite[Corollary 2.7]{MR3175156}) implies $f^n$ has no fixed point in $\bd U_\pm$. Since $K$ is annular and has empty interior, it is easy to verify that $K = \bd U_-\cup \bd U_+$, so $f^n$ has no fixed point in $K$, contradicting the existence of $z_0$.
\end{proof}

Finally let us mention that part (1) of Theorem \ref{th:bg-cofrontier} is also (essentially) true for circloids. In fact we have a stronger Poincar\'e-type result:

\begin{theorem}\cite{jager-koro} If $K$ is a circloid with decomposable boundary (and in particular if $K$ is a decomposable cofrontier) then the rotation number $\alpha=\rho(F,x)$ is well-defined and independent of $x\in K$. Moreover, if $\alpha$ is irrational, then $f|_K$ is monotonically semiconjugate to an irrational rotation on the circle.
\end{theorem}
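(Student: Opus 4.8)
The plan is to reduce the statement to Poincaré's classical theory by exhibiting $f|_K$ as a \emph{monotone extension of a circle homeomorphism}: a continuous monotone (connected point--preimages) degree--one surjection $\Pi\colon K\to\T^1$, together with a homeomorphism $g\colon\T^1\to\T^1$ isotopic to the identity such that $\Pi\circ f|_K=g\circ\Pi$. The construction of $\Pi$ is where the hypothesis on $\partial K$ enters; granting it, both conclusions follow quickly, so I describe that part first.

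Fix a lift $G\colon\R\to\R$ of $g$ and a lift $\til\Pi\colon\cK\to\R$ of $\Pi$, chosen together so that $\til\Pi\circ F|_\cK=G\circ\til\Pi$, $\til\Pi T=T\til\Pi$ and $G(x+1)=G(x)+1$ (this compatibility is automatic once $\Pi$ and $g$ arise from a single $f$--invariant decomposition of $\A$, as they will: take $\til\Pi$ to be the restriction to $\cK$ of the equivariant quotient map, and $G$ the induced map on the lifted circle). Since $\pr_1-\til\Pi$ is $T$--invariant on $\cK$, it descends to a continuous, hence bounded, function on the compact set $K$; therefore for every $z\in\cK$,
\[\pr_1(F^n(z))-\pr_1(z)=\big(\til\Pi(F^n(z))-\til\Pi(z)\big)+O(1)=\big(G^n(\til\Pi(z))-\til\Pi(z)\big)+O(1).\]
Dividing by $n$ and using that $(G^n(w)-w)/n\to\rho(G)$ uniformly in $w$ (Poincaré), we conclude that $\rho(F,z)$ exists and equals $\alpha:=\rho(G)$ for every $z\in\cK$, uniformly; thus $\alpha$ is the well--defined rotation number of every point of $K$ (note that this already covers $\inter K$, since the estimate ran over all of $\cK$). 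If $\alpha$ is irrational, Poincaré's semiconjugacy theorem gives a monotone degree--one map $h\colon\T^1\to\T^1$ with $h\circ g=R_\alpha\circ h$, $R_\alpha$ the rotation by $\alpha$; then $h\circ\Pi\colon K\to\T^1$ is a composition of monotone maps between compacta, hence monotone (the preimage under a closed monotone map of a connected set is connected), and it semiconjugates $f|_K$ to $R_\alpha$.

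To construct $\Pi$, call a \emph{necklace} a finite cyclically ordered family $D_1,\dots,D_m$ of inessential subcontinua of $\inter\A$ with $D_i\cap D_{i+1}\neq\emptyset$, $D_i\cap D_j=\emptyset$ for non--consecutive $i,j$ (indices mod $m$), and $\bigcup_i D_i$ an essential annular continuum containing $K$; its mesh is $\max_i\diam D_i$. Collapsing each bead of a necklace $N$ to an arc and each overlap to a point yields a monotone degree--one map $p_N\colon N\to\T^1$, and $p_N|_K$ is onto since its image is an essential subcontinuum of $\T^1$. The key lemma is: $\partial K$ is decomposable and contains no proper essential subcontinuum if and only if $K$ is the intersection of a nested sequence of necklaces $(N_j)$ with $\operatorname{mesh}(N_j)\to 0$. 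The forward implication is the substantive one — one extracts fine necklaces from a decomposition $\partial K=A\cup B$ into inessential pieces together with the absence of essential subcontinua, refining the mesh iteratively — whereas an indecomposable $\partial K$ admits no such ``cut''. Passing to the inverse limit of the compatible system $\big(p_{N_j}|_K\big)_j$ gives a monotone degree--one surjection of $K$ onto $\varprojlim(\T^1,\text{bonding maps})$, which is circle--like and locally connected (inverse limits along monotone maps of locally connected spaces are locally connected), hence homeomorphic to $\T^1$; this is $\Pi$. Equivalently, one may collapse the corresponding decomposition of $\A$ (singletons off $\partial K$) by Moore's theorem. Finally, the fibers of $\Pi$ form an \emph{intrinsic} decomposition of $K$ — describable without reference to any particular choice of fine necklaces — so it is preserved by the homeomorphism $f$, which therefore descends to a homeomorphism $g$ of $\T^1$, isotopic to the identity because $f$ is.

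The main obstacle is thus topological rather than dynamical: establishing the necklace lemma (turning the hypothesis ``$\partial K$ decomposable, no essential subcontinuum'' into an outward approximation of $K$ by combinatorial circles of vanishing mesh) and verifying that the resulting inverse limit is genuinely $\T^1$ and not a more complicated circle--like continuum. Once this structural fact is in hand, the passage to rotation numbers and to the Denjoy--type semiconjugacy is formal, as in the second paragraph.
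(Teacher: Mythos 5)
The theorem you are asked to prove is only \emph{cited} in this paper (reference \texttt{jager-koro}); there is no internal proof to compare against, so I can only assess your proposal on its own merits.

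Your second paragraph (the reduction, assuming the monotone factor $\Pi$ exists) is correct and clean: $T$-equivariance of $\til\Pi$ makes $\pr_1-\til\Pi$ bounded on $\cK$, the uniform Poincar\'e limit for $G$ then gives a uniform rotation number $\alpha=\rho(G)$ for all of $K$, and in the irrational case composing $\Pi$ with the Poincar\'e semiconjugacy $h$ yields a monotone semiconjugacy to $R_\alpha$. The entire weight of the argument therefore rests on the third paragraph, and that is where the gap lies. Your ``necklace lemma'' asserts that decomposability of $\bd K$ is \emph{equivalent} to the existence of a nested sequence of necklaces with mesh tending to $0$, and your only justification for the substantive direction is that one ``extracts fine necklaces \dots refining the mesh iteratively.'' But no mechanism for the iteration is given, and the obvious one does not work: the hypothesis supplies a single decomposition $\bd K=A\cup B$ into proper (hence, by the circloid property, inessential) subcontinua, and nothing forces $A$ or $B$ to be decomposable in turn. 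They could, for instance, be hereditarily indecomposable, in which case the naive refinement stalls after one step and you obtain a 2-bead ``necklace'' of fixed mesh, not a sequence with mesh $\to 0$. Establishing the existence of arbitrarily fine necklaces from a single decomposition of $\bd K$ requires a genuinely new idea (interaction with the ambient planar/annular topology, Kuratowski/circular-chainability theory, or a dynamical refinement), and this is precisely the hard part of the theorem.

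Even granting the necklace lemma, a second gap remains: you assert that the fibers of $\Pi$ form an ``intrinsic'' decomposition of $K$, so that $f$ permutes them and descends to a circle homeomorphism $g$. This is what makes $\Pi$ a semiconjugacy rather than merely a monotone map, and it is not automatic. Two different choices of fine necklace sequences a priori yield different inverse-limit decompositions, so you must actually characterize the fibers in a choice-free way (e.g.\ as maximal subsets of $K$ that cannot be separated by finite chains of inessential cuts, or via the upper-semicontinuous core of the decomposition) and prove that this characterization is invariant under any annulus homeomorphism preserving $K$. As written, the $f$-equivariance of $\Pi$ -- which is what feeds the whole first paragraph -- is simply asserted. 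In short, the reduction is right, but both of the structural claims that carry the proof (the necklace refinement and the canonicality of the resulting decomposition) are stated without proof, and at least the first is nontrivial enough that a plausible-looking counterexample strategy (indecomposable pieces $A$, $B$) must be ruled out before the approach can be called correct.
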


\section{Realizing periodic points on circloids}

In this section we show that the Barge-Gillette Theorem \cite{MR1145613} also holds for circloids. 

We will use the following classical result from Brouwer theory (see for instance \cite{MR951509}).
\begin{proposition}\label{pro:brouwer-free} If $F\colon \R^2\to \R^2$ is an orientation-preserving homeomorphism and there exists an open topological disk $U$ such that $F(U)\cap U= \emptyset$ and $F^n(U)\cap U\neq \emptyset$ for some $n>1$, then $F$ has a fixed point.
\end{proposition}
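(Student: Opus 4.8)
This is the classical \emph{free disk lemma} of Brouwer, and the plan is to deduce it from the Brouwer Plane Translation Theorem (equivalently, from Brouwer's lemma on translation arcs), which I will treat as the available black box. Arguing by contradiction, suppose $F$ has no fixed point, and let $n\ge 2$ be the least integer for which $F^n(U)\cap U\neq\emptyset$; such an $n$ exists, and is at least $2$, precisely because $F(U)\cap U=\emptyset$. The first step is the observation that the open topological disks $U_0=U,\,U_1=F(U),\dots,U_{n-1}=F^{n-1}(U)$ are pairwise disjoint: for $0\le i<j\le n-1$ one has $F^i(U)\cap F^j(U)=F^i\bigl(U\cap F^{\,j-i}(U)\bigr)=\emptyset$, since $1\le j-i\le n-1$ and $n$ is minimal. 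Thus the hypothesis furnishes a \emph{periodic chain of disjoint free disks}: picking $x_0\in U$ with $F^n(x_0)\in U$, we get $x_j:=F^j(x_0)\in U_j$ for $0\le j\le n-1$ while $F^n(x_0)\in U_0$.

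The second step converts this essentially two-dimensional recurrence into one-dimensional data. Using that $U_0$ is arcwise connected, fix an arc $\alpha\subset U_0$ joining $x_0$ to $x_n:=F^n(x_0)$, and set $G:=F^n$, so $\alpha$ runs from $x_0$ to $G(x_0)$. After replacing $\alpha$ by a suitable subarc near $x_0$ and using that $F$ (hence $G$ on the relevant region) is fixed-point-free, one arranges $\alpha$ to be a genuine translation arc for $G$, i.e.\ $G(\alpha)\cap\alpha$ is just the endpoint $G(x_0)$. Brouwer's lemma on translation arcs then forces $G^k(\alpha)\cap\alpha=\emptyset$ for $|k|\ge 2$, so $\Lambda_0:=\bigcup_{k\in\Z}G^k(\alpha)=\bigcup_{k\in\Z}F^{kn}(\alpha)$ is a properly embedded topological line on which $G$ acts, up to conjugacy, as the unit translation; in particular $G$ has no periodic point on $\Lambda_0$. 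Writing $\Lambda_i:=F^i(\Lambda_0)$, one has $F(\Lambda_{n-1})=F^n(\Lambda_0)=\Lambda_0$, so $F$ permutes the finite set $\{\Lambda_0,\dots,\Lambda_{n-1}\}$ cyclically; the lines in a single such $F$-orbit are pairwise disjoint, because each of them meets the disk $U_i$ associated to it but avoids the others.

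For the last step, extend $F$ to a homeomorphism $\hat F$ of the sphere $\SS^2=\R^2\cup\{\infty\}$ fixing $\infty$, which is legitimate since every homeomorphism of $\R^2$ is proper. If the $F$-orbit of $\Lambda_0$ has $d\ge 2$ elements, then each of these $d$ lines closes up to a topological circle through $\infty$, the circles are mutually disjoint away from $\infty$, and $\hat F$ permutes them in a single $d$-cycle; hence near $\infty$ the orientation-preserving homeomorphism $\hat F$ fixes $\infty$ and cyclically permutes the $d\ge 2$ disjoint arcs it issues, which pins the Poincar\'e index $\ind_\infty(\hat F)$ to a value different from $2$. Since an orientation-preserving homeomorphism of $\SS^2$ is homotopic to the identity, the Lefschetz formula gives $\sum_{p\in\fix(\hat F)}\ind_p(\hat F)=\chi(\SS^2)=2$, so $\hat F$ has a fixed point other than $\infty$, i.e.\ $F$ has a fixed point in $\R^2$ — a contradiction. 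The remaining case, in which $\Lambda_0$ itself is $F$-invariant, must be ruled out separately (for instance by noting that $F$ then restricts to a fixed-point-free monotone homeomorphism of the line $\Lambda_0$ preserving each complementary half-plane, and re-running the Brouwer-theoretic argument inside one of those half-planes, where the disk $U$ still produces a periodic chain of disjoint free disks); alternatively, one can phrase the whole contradiction directly through the Brouwer Plane Translation Theorem, which forbids a fixed-point-free orientation-preserving homeomorphism from admitting any periodic chain of disjoint free disks.

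The delicate point — and the reason the simple connectedness of $U$ cannot be dropped — is the second step: individual points of $U$ need not be recurrent, so one must manufacture the line $\Lambda_0$ from the \emph{area} swept by $U$ rather than from any single orbit, choosing the arc $\alpha$ inside $U_0$ carefully so that its $G$-orbit is embedded enough for Brouwer's lemma to apply, and then keeping track of the possible coincidences among the $\Lambda_i$. Here one exploits the fact that arcs lying inside an open disk are automatically tame, even when the disk's frontier is wild; once $\Lambda_0$ is in place, the remaining index/plane-translation reasoning is routine.
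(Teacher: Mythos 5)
The paper itself does not prove this proposition: it is quoted as a classical fact of Brouwer theory with a reference, so the comparison can only be with the standard argument in the literature. Your first step (for the minimal $n$ the disks $U_0,\dots,U_{n-1}$ are pairwise disjoint and free) is correct, but the core of your argument has genuine gaps. (i) The arc $\alpha\subset U_0$ from $x_0$ to $G(x_0)$, $G=F^n$, cannot be turned into a translation arc for $G$ by ``replacing it by a subarc near $x_0$'': a subarc no longer joins $x_0$ to $G(x_0)$, and since $G(U_0)$ overlaps $U_0$ there is no reason such a translation arc exists inside $U_0$ at all. (ii) Even granting a translation arc for $G$ somewhere, its existence is not contradictory --- every fixed-point-free orientation-preserving plane homeomorphism has translation arcs and streamlines --- so the entire weight falls on the final index computation, which is unsubstantiated: the disjointness of the $\Lambda_i$ does not follow from your stated reason, since $\Lambda_0=\bigcup_k F^{kn}(\alpha)$ is \emph{not} contained in $U_0$ (the sets $F^{kn}(U_0)$ with $k\neq 0$ may well meet $U_j$ for $j\neq 0$, as minimality of $n$ controls only the iterates $F^m(U)$ with $1\le m\le n-1$); and the claim that cyclically permuting $d\ge 2$ disjoint arcs at $\infty$ ``pins $\ind_\infty(\hat F)$ to a value different from $2$'' is asserted, not proved --- it is exactly the nontrivial content one would need. (iii) Your fallback, that ``the Brouwer Plane Translation Theorem forbids periodic chains of disjoint free disks,'' is circular: that statement is Franks' free-disk-chain lemma, i.e.\ precisely (a generalization of) the proposition being proved, and it does not follow from the plane translation theorem without an argument.

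The standard proof is shorter and bypasses all of this. With $n$ minimal and $x_0\in U$, $F^n(x_0)\in U$, choose a homeomorphism $h$ supported in a compact subset of $U$ with $h(F^n(x_0))=x_0$ and set $F'=hF$. Because $U$ is free, $\fix(F')=\fix(F)=\emptyset$; because $F^j(x_0)\notin U$ for $1\le j\le n-1$, one gets $(F')^j(x_0)=F^j(x_0)$ for those $j$ and $(F')^n(x_0)=x_0$, so $x_0$ is a genuine periodic point of $F'$ of period $n\ge 2$. This contradicts the corollary of Brouwer's translation arc lemma that a fixed-point-free orientation-preserving homeomorphism of the plane has no periodic points: a translation arc $\beta$ for $F'$ at $x_0$ satisfies $(F')^n(\beta)\cap\beta=\emptyset$ for $n\ge 2$, yet $x_0\in\beta\cap (F')^n(\beta)$. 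I recommend you rewrite the proof along these lines, or simply cite the free-disk-chain lemma as the paper does.
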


The next proposition is a simple but useful extension result. Its proof is straightforward; for example it follows from \cite[Lemma 5.1]{MR1971199} (applying it twice):
\begin{proposition}\label{pro:flc-extension} Let $f\colon \A\to \A$ be a homeomorphism isotopic to the identity and $K\subset \inter \A$ an invariant essential annular continuum. Suppose that for a lift $F$ of $f$ one has $\rho^-(F,K)\neq 0\neq \rho^+(F,K)$. Then there exists a homeomorphism $g\colon \A \to \A$ with a lift $G$ such that 
\begin{itemize}
\item The maps $F$ and $G$ coincide in some neighborhood of $\pi^{-1}(K)$;
\item There are no fixed points of $G$ in $\cA\sm \pi^{-1}(K)$. 
\end{itemize}
\end{proposition}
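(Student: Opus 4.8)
The plan is to modify $f$ separately inside the two complementary components $U_+$ and $U_-$ of $K$, on each of them invoking the extension result \cite[Lemma 5.1]{MR1971199} applied to the disk $U_\pm^*$ in the sphere $\A^*$ of Section~\ref{sec:prime-ends}. Since $\cU_+$ and $\cU_-$ are disjoint open $T$-invariant sets, it suffices to construct a homeomorphism $f_1\colon\A\to\A$ coinciding with $f$ on $\A\sm U_+$ whose lift $F_1$ (which can be chosen so as to coincide with $F$ off $\cU_+$, since $\A\sm U_+$ and $\cA\sm\cU_+$ are connected) has no fixed point in $\cU_+$; because $\T^1\times\{1\}\subset U_+$, a fixed point of $F_1$ in $\cU_+$ would project to a fixed point of $f_1$ in $U_+$, so it is enough that $f_1$ be fixed-point free on all of $U_+$. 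Carrying out the analogous modification for $f_1$ and $U_-$ — whose support can be taken to have closure disjoint from $\overline{\cU_+}$, hence with no effect on $\cU_+$ — then yields the desired $g$ and $G$.

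The first step is to use the hypothesis $\rho^+(F,K)\neq 0$ to rule out fixed points of $f$ near $\bd_\A U_+$, where we are not allowed to modify. Through the conjugacy $p_+\colon\cU_+\to H_+$, $F|_{\cU_+}$ is conjugate to $F_+|_{H_+}$, and $F_+$ restricted to $\R\times\{0\}$ is a lift of a circle homeomorphism with rotation number $\rho^+(F,K)\neq 0$, hence has no fixed point; by continuity and $T$-equivariance $\pr_1\!\big(F_+(x,y)-(x,y)\big)$ stays bounded away from $0$ on some strip $\{0\le y\le\epsilon_0\}$, so $F$ has no fixed point on the collar $p_+^{-1}(\{0<y\le\epsilon_0\})$ of $\bd\cU_+$. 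By the standard relation between the prime ends compactification and the ambient topology, the closure in $\cA$ of the complementary region $p_+^{-1}(\{y\ge\epsilon_0\})$ is disjoint from $\bd\cU_+$; thus $F$ (hence $f$) is fixed-point free in some neighborhood of $\bd\cU_+$ relative to $\cU_+$, and any modification supported in $p_+^{-1}(\{y\ge\epsilon_0\})$ automatically glues continuously with $F$ across $\cK=\pi^{-1}(K)$.

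Now I would pass to $\A^*\simeq\SS^2$, with the induced homeomorphism $f^*$ fixing $\pm\infty$. There $U_+^*$ is an $f^*$-invariant open disk, $f^*$ fixes $\infty\in U_+^*$, the rotation number of the prime ends circle map of $U_+^*$ with respect to $F$ is $\rho^+(F,K)\neq 0$, and (by the previous step) $f^*$ is fixed-point free near $\bd U_+^*$. Under these conditions \cite[Lemma 5.1]{MR1971199} produces a homeomorphism $f_1^*$ of $\A^*$, isotopic to $f^*$ through an isotopy supported in a compact subset of $U_+^*\sm\{\infty\}$ — so $f_1^*=f^*$ outside $U_+^*$ and near $\bd U_+^*$, and $\infty$ stays fixed — whose only fixed point in $U_+^*$ is $\infty$. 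After an additional routine isotopy inside a small collar of $\T^1\times\{1\}\subset U_+$, disjoint from $K$ (possible because the pulled-back map is already fixed-point free just inside such a collar, so one may interpolate there between it and a rigid rotation of the boundary), one obtains $f_1\colon\A\to\A$ coinciding with $f$ near $K$, preserving $\bd\A$, and fixed-point free on $U_+$. Repeating the argument for $U_-$ with $\rho^-(F,K)\neq 0$ completes the construction.

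The one genuinely nontrivial ingredient is the content of \cite[Lemma 5.1]{MR1971199}: the ability to sweep all fixed points out of the open disk $U_+^*$ while retaining the single fixed point $\infty$ and leaving the near-boundary dynamics untouched. What makes this possible is exactly the nonvanishing of $\rho^+(F,K)$ — the prime ends circle winds nontrivially around $\infty$, so $\infty$ can be realized as an isolated fixed point of index $1$, consistent with the Lefschetz count on the disk — together with the first step, which guarantees that no fixed point of $f$ is trapped against $\bd U_+^*$. Granting that lemma, everything else is bookkeeping (lifts versus maps, the supports of the two isotopies, continuity of $g$ and $G$ across $\cK$, and preservation of $\bd\A$), which is why the statement is, as the text indicates, essentially a corollary obtained by applying \cite[Lemma 5.1]{MR1971199} twice.
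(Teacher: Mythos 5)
Your proposal is correct and follows exactly the route the paper indicates: the paper's entire ``proof'' is the single remark that the statement ``follows from \cite[Lemma 5.1]{MR1971199} (applying it twice),'' and you apply that lemma once on $U_+$ and once on $U_-$. You have simply supplied the bookkeeping the paper leaves implicit --- the preliminary step that the nonzero prime ends rotation number forces $F$ to be fixed-point free on a collar of $\bd\cU_\pm$ (so the modification glues continuously with $F$ near $\cK$), the passage to the sphere $\A^*$ and back, and the clean-up isotopy near $\T^1\times\{1\}$ so that the lift $G$ has no fixed points on $\bd\cA$ either. All of those steps are sound; the justification that the closure of $p_+^{-1}(\{y\geq\epsilon_0\})$ avoids $\bd\cU_+$ is most cleanly seen by projecting to $\A^*$, where that set becomes a compact subset of the open disk $U_+^*$, and then lifting back. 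The one point worth flagging is that you paraphrase the content of \cite[Lemma 5.1]{MR1971199} (isotopy supported in a compact subset of $U_+^*\setminus\{\infty\}$, unique remaining fixed point $\infty$) rather than quoting it; the structure of your argument is robust to the precise phrasing of that lemma, but a reader would want to verify that the version you use is indeed what Franks--Le Calvez prove.
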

We remark that the maps $f$ and $g$ above are necessarily homotopic rel $K$.
We also need the following improved version of an earlier theorem of Franks, which is similar to {\cite[Lemma 2.2]{MR1145613}.

\begin{lemma} \label{lem:franks-chain} Let $K\subset \A$ be an essential annular continuum, $\cK = \pi^{-1}(K)$, and $F\colon \cA \to \cA$ a lift of a homeomorphism $f\colon \A\to \A$ isotopic to the identity such that $f(K)=K$. If $F$ has no fixed point in $\cK$, then there is $\epsilon>0$ such that $F|_{\cK}$ has no periodic $\epsilon$-chain.
\end{lemma}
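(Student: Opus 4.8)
The plan is to argue by contradiction: suppose that for every $\epsilon>0$ there is a periodic $\epsilon$-chain for $F|_{\cK}$, and produce a fixed point of $F$ in $\cK$. The starting point is the classical observation (due to Franks) that a periodic $\epsilon$-chain for arbitrarily small $\epsilon$ gives, in the limit, a ``pseudo-orbit structure'' forcing recurrence. Concretely, I would first reduce to the case where $F$ has no fixed point in $\cK$ \emph{and} wants to find a Brouwer-type contradiction. Since $f(K)=K$ and $K$ is compact and annular, the set $\cK$ is closed, $T$-invariant, and its complement has exactly the two components $\cU_\pm$; this will be used to control where chains can live.

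The main step is to convert a periodic $\epsilon$-chain of length $n$ that ``closes up'' into a genuine periodic point of a nearby map, or directly into a fixed point of $F$ via Proposition~\ref{pro:brouwer-free}. Following the strategy of \cite[Lemma~2.2]{MR1145613} and the standard Franks argument: if for each $k$ there is an $\epsilon_k$-chain $z_0^k, z_1^k,\dots, z_{n_k}^k=z_0^k$ in $\cK$ with $\epsilon_k\to 0$, one extracts via compactness (of $K$, after projecting, or of a fundamental domain together with the integer translation $T$) a limit that behaves like a recurrent orbit segment for $F$. The key point is that $F$ has no fixed point in $\cK$, so by a standard covering/compactness argument there is a uniform $\delta>0$ and a continuous unit vector field (or a consistent choice of ``free disks'') near $\cK$ with $d(F(z),z)\ge \delta$ for all $z$ in a neighborhood of $\cK$; this lets one find, for some iterate, a small topological disk $U$ near a point of $\cK$ with $F(U)\cap U=\emptyset$ but $F^m(U)\cap U\ne\emptyset$ for suitable $m$, the hypotheses of Proposition~\ref{pro:brouwer-free}. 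One must take care that $F$ is orientation-preserving (it is, being a lift of a homeomorphism isotopic to the identity) and that the disk $U$ can be taken genuinely embedded — here one may need to thicken $\cK$ slightly into one of the complementary domains, or simply work with the open neighborhood on which the ``no fixed point'' estimate holds, since $\cK$ itself may be badly non-locally-connected.

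The technical heart — and the step I expect to be the main obstacle — is precisely this transfer from an $\epsilon$-chain \emph{in the continuum} $\cK$ (which can be wild: non-locally connected, hereditarily indecomposable) to an honest free-disk recurrence picture in the surrounding plane $\cA$. Because $\cK$ has no manifold structure, one cannot naively shadow the chain by an orbit; instead the right move is to observe that an $\epsilon$-chain for $F|_{\cK}$ is in particular an $\epsilon$-chain for $F$ restricted to the small neighborhood $B_\delta(\cK)$, and to run Franks' argument in that neighborhood, which \emph{is} an open subset of the plane. The contradiction with ``$F$ has no fixed point in $\cK$'' is then obtained by arranging the neighborhood to be thin enough that any fixed point it could contain would have to lie in $\cK$ — which in turn uses that $F$ has no fixed point in $\cK$ to get a positive lower bound on displacement on some $B_\delta(\cK)$, combined with continuity of $F$ and compactness of $K$. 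Care is also needed with the non-compactness of $\cK$: one works equivariantly, replacing ``fixed point of $F$'' with ``point $z$ with $F(z)=T^j(z)$ for some $j$'', but since a periodic $\epsilon$-chain closes up exactly (no net translation), the relevant $j$ is $0$, so the standard planar Brouwer statement of Proposition~\ref{pro:brouwer-free} applies after passing to a bounded region via $T$-periodicity.

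Finally, I would record the routine conclusion: if no such $\epsilon$ existed, the above yields a fixed point of $F$ in $B_\delta(\cK)$ for every $\delta$, hence (by compactness, taking $\delta\to 0$) a fixed point in $\cK$, contradicting the hypothesis. This completes the proof. The only genuinely delicate verification is the uniform displacement estimate near $\cK$ and the construction of the embedded free disk adapted to a possibly pathological continuum; everything else is a direct adaptation of Franks' and Barge–Gillette's arguments.
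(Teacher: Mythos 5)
There is a genuine gap in your argument, and it is precisely the point your proposal flags as ``the technical heart.'' You observe that an $\epsilon$-chain for $F|_{\cK}$ is also an $\epsilon$-chain for $F$ restricted to a neighborhood $B_\delta(\cK)$, and you want to run Franks' free-disk argument there. But Proposition~\ref{pro:brouwer-free} (and the closely related \cite[Lemma 2.1]{MR967632}) is a \emph{global} statement: given a free disk with a return, or a chain recurrent point, it produces a fixed point \emph{somewhere in the plane}, not in the neighborhood where the recurrence was exhibited. Your suggestion to ``arrange the neighborhood to be thin enough that any fixed point it could contain would have to lie in $\cK$'' does not address this: thinning the neighborhood constrains fixed points \emph{inside} it, but Brouwer's argument may hand you a fixed point of $F$ outside the neighborhood — say, on a boundary circle of $\A$, or deep inside $\cU_+$ — and there is no contradiction with the hypothesis, which only forbids fixed points in $\cK$. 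So the contradiction you want never materializes with the tools you invoke.

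The paper closes exactly this gap by modifying the map rather than the domain. Since $F$ has no fixed point in $\cK$, Theorem~\ref{th:bg-reali-prime} forces both prime ends rotation numbers $\rho^\pm(F,K)$ to be nonzero, and then Proposition~\ref{pro:flc-extension} (the Franks--Le~Calvez extension lemma) produces a homeomorphism $G$ which agrees with $F$ on a neighborhood of $\cK$ and has \emph{no fixed points in $\cA\sm\cK$}; combined with the hypothesis, $\fix(G)=\emptyset$ globally. Now any chain recurrent point of $F|_{\cK}$ is a chain recurrent point of $G|_{\cK}$, and Franks' lemma applied to the fixed-point-free map $G$ yields the contradiction. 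The remaining step — upgrading ``no chain recurrent point in $\cK$'' to ``no periodic $\epsilon$-chain for a uniform $\epsilon$'' — is the routine $T$-equivariance and compactness argument you sketch at the end, and that part of your proposal is fine. The missing idea is the use of prime ends and the extension lemma to globalize the fixed-point-free property before invoking Brouwer theory.
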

\begin{proof}
Since $F|_{\cK}$ has no fixed point, from Theorem \ref{th:bg-reali-prime} we have that the prime ends rotation numbers are both nonzero, so Proposition \ref{pro:flc-extension} implies that there exists $G\colon\cA\to \cA$ which coincides with $F$ on a neighborhood of $\cK$ and has no fixed point in $\cA\sm \cK$. Hence $\fix(G)=\emptyset$.
If $F|_{\cK}$ has a chain recurrent point then so does $G|_{\cK}$, but by \cite[Lemma 2.1]{MR967632} this implies that $G$ has a fixed point in $\cA$, a contradiction. Thus $F|_{\cK}$ has no chain recurrent points, and we  deduce that there exists $\epsilon>0$ such that there is no periodic $\epsilon$-chain. Indeed, if on the contrary there is a periodic $1/n$-chain in $\cK$ for every $n\in \N$, one may assume that the initial point of this chain lies in $\cK\cap ([0,1]\times [0,1])$, and the limit of a subsequence of these initial points is a chain recurrent point of $F|_\cK$.
\end{proof}

\begin{corollary}\label{cor:franks-recurrence} Under the hypotheses of the previous lemma, for every $z\in \cK$, the limit $\lim_{n\to \infty} \pr_1 F^n(z)$ is either $\infty$ or $-\infty$.
\end{corollary}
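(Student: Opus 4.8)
The plan is to argue by contradiction using the previous lemma. Fix $\epsilon>0$ so that $F|_{\cK}$ admits no periodic $\epsilon$-chain, and suppose that for some $z\in\cK$ the sequence $s_n:=\pr_1 F^n(z)$ tends neither to $+\infty$ nor to $-\infty$. First I would record that $s_{n+1}-s_n=\phi_F(\pi(F^n(z)))$, so that $|s_{n+1}-s_n|\le L$ for all $n$, where $L=\max_\A|\phi_F|<\infty$ by continuity of the displacement function on the compact annulus $\A$. The goal of the first step is then to show that the orbit $(F^n(z))_{n\ge 0}$ visits some fixed compact horizontal strip infinitely often.

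To carry this out, set $a=\liminf_n s_n$ and $b=\limsup_n s_n$. The hypothesis that $\lim_n s_n$ is neither $+\infty$ nor $-\infty$ means that either $a=b$ is finite, or $a<b$. In the first case $(s_n)$ is bounded and we are immediately done, since $([c,d]\times[0,1])\cap\cK$ is compact (recall $\cK$ is closed in $\cA$). In the second case I would pick any real number $r$ with $a<r<b$; then $s_n<r$ for infinitely many $n$ and $s_n>r$ for infinitely many $n$, so there are infinitely many indices $n$ with $s_n\le r<s_{n+1}$, and for each of these the increment bound gives $r-L<s_n\le r$. Either way, infinitely many iterates $F^n(z)$ lie in a compact strip $S=([c,d]\times[0,1])\cap\cK$, so after passing to a subsequence we obtain indices $n_1<n_2<\cdots$, which we may take with $n_{i+1}\ge n_i+2$, and a point $w\in S\subset\cK$ with $F^{n_i}(z)\to w$.

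The second step is to manufacture the periodic $\epsilon$-chain. Using continuity of $F$ at $w$, I would choose $i$ large enough that $d\big(F(w),F(F^{n_i}(z))\big)<\epsilon$ and $d\big(F^{n_{i+1}}(z),w\big)<\epsilon$. The finite sequence
$$ w,\ F^{n_i+1}(z),\ F^{n_i+2}(z),\ \ldots,\ F^{n_{i+1}-1}(z),\ w $$
is then a periodic $\epsilon$-chain for $F|_{\cK}$: the first jump $d(F(w),F^{n_i+1}(z))=d(F(w),F(F^{n_i}(z)))$ is $<\epsilon$ by the choice of $i$, every intermediate jump is an exact orbit step (hence $0$), and the final jump $d(F(F^{n_{i+1}-1}(z)),w)=d(F^{n_{i+1}}(z),w)$ is $<\epsilon$; the chain has length $n_{i+1}-n_i\ge 2$ and returns to its starting point $w$. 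This contradicts the lemma and proves the corollary.

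The only place requiring genuine care is the first step: since $\cK$ is generally noncompact, one cannot simply extract a convergent subsequence of the orbit, and the point is to use the boundedness of the displacement $\phi_F$ to trap infinitely many iterates inside a single bounded strip — handling uniformly both the case of a bounded orbit and the case of an orbit that is unbounded but repeatedly crosses some fixed level. The chain construction afterwards is routine; the one small subtlety there is to arrange $n_{i+1}-n_i\ge 2$ so that the resulting chain is an honest nontrivial periodic chain rather than the degenerate loop $w,w$.
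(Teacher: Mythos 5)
The paper states this corollary without proof, treating it as an immediate consequence of Lemma~\ref{lem:franks-chain}; your argument is correct and is precisely the natural reasoning left to the reader. Your handling of the two cases ($\liminf=\limsup$ finite, versus $\liminf<\limsup$) via the uniform bound on the displacement, the extraction of a limit point $w$ in a compact strip of $\cK$, the insistence on $n_{i+1}\ge n_i+2$ to get a nondegenerate loop, and the resulting periodic $\epsilon$-chain all check out.
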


We now proceed to the proof of Theorem \ref{th:reali-rational}, which we restate below:
\begin{theorem}\label{th:reali-rational2} Suppose $f\colon \A\to \A$ is a homeomorphism homotopic to the identity and $K\subset \inter \A$ is an essential invariant circloid. Let $F\colon \cA\to \cA$ be a lift of $f$. Then:
\begin{itemize}
\item[(1)] The prime ends rotation numbers belong to the rotation set: 
$$\{\rho^+(F, K), \rho^-(F,K)\}\subset \rho(F,K).$$
\item[(2)] For every rational $p/q\in \ol{\rho}(F,K)$ there exists a $(p,q)$-periodic point in $K$.
\end{itemize}
\end{theorem}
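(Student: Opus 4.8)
The plan is to derive everything from part~(2), and to prove part~(2) by reducing it to a fixed--point statement. Given a rational $p/q\in\ol{\rho}(F,K)$, I would replace $F$ by the lift $T^{-p}F^q$ of $f^q$: since $K$ is still an essential $f^q$--invariant circloid and $\ol{\rho}(T^{-p}F^q,K)=q\,\ol{\rho}(F,K)-p$, the existence of a $(p,q)$--periodic point of $F$ in $K$ is equivalent to the existence of a fixed point of $T^{-p}F^q$ in $\cK=\pi^{-1}(K)$, while the hypothesis $p/q\in\ol{\rho}(F,K)$ becomes $0\in\ol{\rho}(T^{-p}F^q,K)$. So it suffices to prove: \emph{if $0\in\ol{\rho}(F,K)$ then $F$ has a fixed point in $\cK$.} Arguing by contradiction, suppose $\fix(F)\cap\cK=\emptyset$. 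Then neither prime ends rotation number can vanish, since a $(0,1)$--periodic point in $K$ would be such a fixed point; hence $\rho^\pm(F,K)\neq0$ by Theorem~\ref{th:bg-reali-prime}. Consequently Lemma~\ref{lem:franks-chain} applies and gives $\epsilon_0>0$ such that $F|_\cK$ has no periodic $\epsilon_0$--chain, and Corollary~\ref{cor:franks-recurrence} gives that $\pr_1F^n(z)\to\pm\infty$ for every $z\in\cK$. The goal is now to exhibit a periodic $\epsilon_0$--chain in $\cK$, contradicting Lemma~\ref{lem:franks-chain}.

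The first key step uses only the connectedness of $K$ (a circloid is a continuum). For each $n\geq1$, the function $z\mapsto\pr_1(F^n(z)-z)$ is $T$--invariant, hence descends to a continuous function on the compact connected set $K$; integrating it against an $f$--invariant Borel probability realizing rotation number $0$ (such a measure exists because $0\in\ol{\rho}(F,K)=\{\rho(F,\mu):\mu\in\cM(f|_K)\}$, and $\int\pr_1(F^n-\mathrm{id})\,d\mu=n\,\rho(F,\mu)=0$), one sees that this function takes a nonnegative and a nonpositive value on $K$, so by connectedness it vanishes somewhere. Lifting, for every $n$ there is $z_n\in\cK$ with $\pr_1F^n(z_n)=\pr_1 z_n$, which after translating by a power of $T$ we may take in $[0,1)\times[0,1]$. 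Passing to a subsequence, $z_n\to z^*$ and $F^n(z_n)\to w^*$ in the compact set $\cK\cap([0,1]\times[0,1])$, with $\pr_1 z^*=\pr_1 w^*$.

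Next I would split according to the horizontal oscillation $D_n=\max_{0\leq j\leq n}\pr_1F^j(z_n)-\min_{0\leq j\leq n}\pr_1F^j(z_n)$ of these orbit segments. If $D_n$ stays bounded along a subsequence, then all those segments lie in a single compact subset of $\cA$; since each has $n+1\to\infty$ points, the pigeonhole principle produces, for $n$ large, times $j<j'$ with $d(F^j(z_n),F^{j'}(z_n))<\epsilon_0$, and then $F^j(z_n),F^{j+1}(z_n),\dots,F^{j'-1}(z_n),F^j(z_n)$ is a periodic $\epsilon_0$--chain in $\cK$, a contradiction. If instead $D_n\to\infty$, the orbit of $z_n$ makes an excursion of diameter $D_n$ and returns to the vertical line through $z_n$; this is where the circloid hypothesis enters, through Lemma~\ref{lem:circloid-subcontinua}. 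The idea is to re--center the far part of the excursion by the appropriate power of $T$ and feed a long horizontal sub--continuum of a neighborhood of $\cK$ (obtained from the re--centered excursion together with the separating continuum $\bd\cK$) into Lemma~\ref{lem:circloid-subcontinua}, forcing this arc to pass within $\epsilon$ of all of $\bd\cK$ in a fixed window, and in particular within $\epsilon$ of $z^*$ and of $w^*$; splicing the orbit segment running from near $z^*$ to near $w^*$ with such a returning arc then yields a periodic $\epsilon_0$--chain in $\cK$, the desired contradiction. This unbounded--excursion case is the main obstacle: making precise the passage from a long orbital excursion to an honest periodic $\epsilon_0$--chain via Lemma~\ref{lem:circloid-subcontinua} (the point where the Barge--Gillette argument for cofrontiers must be upgraded, with $\bd\cK$ playing the role of $\cK$) is the delicate part of the argument.

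Finally, part~(1) follows: by Theorem~\ref{th:matsumoto-prime} one has $\rho^\pm(F,K)\in\ol{\rho}(F,K)$; if $\rho^\pm(F,K)$ is rational it is realized by a periodic orbit, hence belongs to $\rho(F,K)$, by part~(2) (or directly by Theorem~\ref{th:bg-reali-prime}); and if it is irrational, part~(2) shows that every rational of $\ol{\rho}(F,K)$ belongs to $\rho(F,K)$, so by the closedness of $\rho(F,K)$ from Theorem~\ref{th:handel-gen}(1) (applicable since $K$ is in particular an essential annular continuum) we get $\rho(F,K)=\ol{\rho}(F,K)\ni\rho^\pm(F,K)$.
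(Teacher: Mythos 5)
Your proof takes a genuinely different route from the paper after the common setup. You share the same reductions: pass to $T^{-p}F^q$ to reduce to $p/q=0$, argue by contradiction that $\fix(F)\cap\cK=\emptyset$, invoke Theorem~\ref{th:bg-reali-prime} to get $\rho^\pm(F,K)\neq0$, obtain $\epsilon_0$ from Lemma~\ref{lem:franks-chain}, and use Corollary~\ref{cor:franks-recurrence}. At that point the paper instead writes $\cK=\cK^+\cup\cK^-$ via Corollary~\ref{cor:franks-recurrence}, establishes three claims about $\cK_0^\pm=\bd\cK\cap\cK^\pm$ (both nonempty; every subcontinuum of $\bd\cK$ lies wholly in one; both are dense in $\bd\cK$), and then derives the contradiction from Baire's theorem, since $\cK_0^\pm$ are countable unions of translates of the closed sets $\cB_0^\pm$. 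You instead try to manufacture a periodic $\epsilon_0$--chain directly from zero--net--displacement orbit segments $z_n,\dots,F^n(z_n)$ with $\pr_1z_n=\pr_1F^n(z_n)$, dichotomizing by the horizontal oscillation $D_n$. Your bounded--$D_n$ branch is fine: the pigeonhole closure really does give a periodic $\epsilon_0$--chain.

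The gap is exactly where you flag it, in the unbounded case, and I do not think the sketch as written can be completed. Lemma~\ref{lem:circloid-subcontinua} takes a \emph{continuum} $C\subset B_\delta(\cK)$ as input, but an orbit segment $\{z_n,F(z_n),\dots,F^n(z_n)\}$ is a finite set, not a continuum; ``the re-centered excursion together with the separating continuum $\bd\cK$'' does not produce a usable continuum either, since the union with $\bd\cK$ is just $\bd\cK$ together with some isolated points, and $\bd\cK$ by itself already trivially satisfies the hypothesis of Lemma~\ref{lem:circloid-subcontinua} in a way that yields no information. Even granting some arc near $z^*$ and $w^*$, nothing in the sketch explains how to traverse that arc using $F$ in $\epsilon_0$-steps; a topological arc in $\cK$ is not an $\epsilon$-chain. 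The paper avoids this by working exclusively with genuine subcontinua of $\bd\cK$ and their images under $F^n$ (which \emph{are} continua), and closing the chain in the proof of its Claim~2 via the inclusion $C\subset B_{\epsilon/2}(F^n(C))$, building a backward pseudo-orbit inside $C$ and closing by pigeonhole --- a dynamical step with no analogue in your unbounded branch. So the unbounded case, which is the substantive part of the theorem, remains a genuine gap. (Your derivation of part~(1) from part~(2), Matsumoto's theorem, and Theorem~\ref{th:handel-gen}(1) is correct and in fact a bit more explicit than the paper's one-line citation.)
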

\begin{proof}
The first part is Theorem \ref{th:matsumoto-prime}.  
To prove the second part, note that by the usual argument replacing $F$ by $T^{-q}F^p$, it suffices to consider the case where $p/q=0$. 
Assume $0\in \ol{\rho}(F,K)$ and suppose for a contradiction that $F$ has no fixed points in $\cK=\pi^{-1}(K)$. 
This implies, by Theorem \ref{th:bg-reali-prime}, that the two prime ends rotation numbers $\rho^\pm(F,K)$ are nonzero, so using Proposition \ref{pro:flc-extension} we may assume that $F$ has no fixed points outside $\cK$ (hence $F$ has no fixed points at all).
This implies that $\rho_\infe(F,K)<0<\rho_\supe(F,K)$; indeed, if $0$ were an endpoint of the rotation interval then there would exist an ergodic measure supported in $K$ with mean rotation vector $0$, and since the support of an ergodic measure is chain transitive, Theorem 2.7 would imply that there is a fixed point in $K$, contradicting our assumption.

By Lemma \ref{lem:franks-chain} and its corollary, we may write $\cK = \cK^+\cup \cK^-$, where $\cK^\pm = \{z\in \cK: \lim_{n\to \infty} F^n(\til{z}) = \pm \infty\}$.
Note that $\cK^+$ and $\cK^-$ are disjoint, $T$-invariant, $F$-invariant, and nonempty (since $\rho_\infe(F,K)<0<\rho_\supe(F,K)$).

\begin{claim} The sets $\cK_0^+ = \bd \cK\cap \cK^+$ and $\cK_0^-=\bd \cK\cap \cK^-$ are both nonempty.
\end{claim}
\begin{proof}
We show it for $\cK^-_0$, since the other case is symmetric. Suppose for a contradiction that $\cK_0^-=\emptyset$. Since $\rho_\infe(F,K)< 0$, there exists an ergodic measure supported in $K$ with negative rotation number. In particular, there exists a recurrent point $z\in K$ such that $\rho(F,z)< 0$, and by our assumption, $z$  must be in the interior of $K$.  Let $U$ be the connected component of the interior of $K$ containing $z$, choose $z'\in \pi^{-1}(z)$, and let $\cU$ be the connected component of $\pi^{-1}(U)$ containing $z'$. Note that $\rho(F,z')<0$ and so $z'\in \cK^-$. Since $U$ is a topological disk, $\pi|_{\cU}$ is injective. Moreover, since the interior of $K$ is invariant, either $F(\cU)$ is disjoint from $\cU$ or $F(\cU)=\cU$. If the latter case holds, then since $z$ is recurrent for $f$ and $\pi|_{\cU}F = f\pi|_{\cU}$, it follows that $z'$ is $F$-recurrent, contradicting the fact that $z'\in \cK^-$. Thus $F(\cU)$ is disjoint from $\cU$.

We may join $z'$ to a point $x\in \bd \cU$ by an arc $\gamma$ that is contained in $\cU$ except for its endpoint $x$. We will show that $x\in \cK^-$. Suppose on the contrary that $x\in \cK^+$. Since $F(U)$ is disjoint from $U$ and $F(x)\neq x$, we have $F(\gamma)\cap \gamma=\emptyset$, and there exists a connected neighborhood $W$ of $\gamma$ such that $F(W)\cap W=\emptyset$. 
Since $F$ has no fixed points, by Proposition \ref{pro:brouwer-free} this implies that $F^n(W)\cap W=\emptyset$ for all $n\neq 0$. By Lemma \ref{lem:circloid-subcontinua} we may choose $M$ such that any continuum $C\subset \cK$ satisfying $[-M,M]\subset \pr_1(C)$ intersects $W$. 
Since $x\in \cK^+$ and $z'\in \cK^-$, if $n$ is chosen large enough then $[-M,M]\subset \pr_1 F^n(\gamma)$, so $F^n(\gamma)\cap W \neq \emptyset$. This contradicts the fact that $F^n(W)\cap W=\emptyset$, and we conclude that $x\in \cK^-$. Since $x\in \bd\cU\subset \bd \cK$, it follows that $\cK_0^-=\bd \cK\cap \cK^-\neq \emptyset$, contradicting our initial assumption. 
\end{proof}

\begin{claim} Every subcontinuum of $\bd \cK$ is entirely contained in $\cK^+_0$ or $\cK^-_0$.
\end{claim}
\begin{proof}
Suppose $C\subset \bd \cK$ is a continuum intersecting both $\cK^+$ and $\cK^-$. Then given $M>0$, we have that $[-M,M]\subset \pr_1(F^n(C))$ for any large enough $n$. Fix $\epsilon>0$ as in Lemma \ref{lem:franks-chain}, so that there is no periodic $\epsilon$-chain for $F|_{\cK}$. There are finitely many points $x_1,\dots,x_m\in C$ such that $C\subset \bigcup_{i=1}^m B_{\epsilon/4}(x_i)$, and applying Lemma \ref{lem:circloid-subcontinua}, we may fix $n>0$ large enough so that $F^n(C)$ intersects $B_{\epsilon/4}(x_i)$ for all $i\in \{1,\dots ,m\}$. This means that $C\subset B_{\epsilon/2}(F^n(C))$. Let $z_0\in C$ be arbitrary, and define a sequence $z_0, z_1,\dots, z_m$ inductively as follows: assuming that $z_k$ is defined, choose $z_{k+1}\in C$ such that $z_k\in B_{\epsilon/2}(F^n(z_{k+1}))$.
Since $C$ is covered by $m$ balls of radius $\epsilon/4$, there exist $i,j$ such that $0\leq i<j\leq m$ and $d(z_i, z_j)<\epsilon/2$. The sequence
$$z_j, F(z_j), \dots, F^{n-1}(z_j), z_{j-1}, F(z_{j-1}),\dots, z_{i+1}, F(z_{i+1}), \dots, F^{n-1}(z_{i+1}), z_j$$
is then a periodic $\epsilon$-chain for $F|_{\cK}$, a contradiction.
\end{proof}

\begin{claim} The sets $\cK_0^+$ and $\cK_0^-$ are both dense in $\bd \cK$.
\end{claim}
\begin{proof}
We prove the claim for $\cK_0^+$, since the proof for $\cK_0^-$ is analogous.

Using Lemma \ref{lem:circloid-subcontinua}, given $\epsilon>0$ we may choose $M>0$ such that any subcontinuum $C$ of $\bd \cK$ such that $[-M,M]\subset \pr_1(C)$ projects onto an $\epsilon$-dense subset of $\bd K$. Note that if $C\subset \bd\cK$ is a continuum such that $\pr_1(C)$ has diameter greater than $2M+1$, then $[-M,M]\subset \pr_1(T^n(C))$ for some $n\in \Z$, hence $\pi(C)$ is also $\epsilon$-dense in $\bd K$.

Choose $z\in \bd \cK_0^+$. If $Q = (\pr_1 z-M-1, \pr_1 z+M+1)\times [0,1]$, then $\bd \cK\cap Q$ separates the two horizontal boundary lines of $Q$, so by Proposition \ref{pro:newman-separator} some connected component $B$ of $\bd \cK\cap \inter Q$ separates the horizontal boundary lines of $Q$. Since $\ol{B}$ is a subcontinuum of $\bd \cK$ intersecting $\cK^+$, by the previous claim it is contained in $\cK^+_0$, and since $\pr_1(\ol B)$ has diameter at least $2M+2$, the set $\pi(\ol B)\subset \pi(\cK_0^+)$ is $\epsilon$-dense in $\bd K$. Since $\cK_0^+$ is $T$-invariant and $\bd \cK =\pi^{-1}(\bd K)$, it follows that $\cK_0^+$ is $\epsilon$-dense in $\bd \cK$. 
\end{proof}

To complete the proof of the theorem, note that we may write $\cK_0^\pm = \bigcup_{n\in \Z} T^n(\cB_0^\pm)$, where $\cB_0^\pm = \{z\in \bd \cK: \pm\pr_1 F^n(z) \geq 0\, \forall n\geq 0\}$, which is a closed set. Thus $\cK_0^+\cup \cK_0^-=\bd \cK$ is written as a countable union of closed sets, and by Baire's theorem it follows that one of the sets $T^n(\cB_0^+)$ or $T^n(\cB_0^-)$ has nonempty (relative) interior in $\bd\cK$. In particular, one of the sets $\cK^+_0$ or $\cK^-_0$ has nonempty interior in $\bd\cK$, contradicting the fact that the two sets are disjoint and dense in $\bd \cK$.
\end{proof}
\section{Handel's theorem for annular continua}

This section is devoted to the proof of Theorem \ref{th:handel-gen}. Let us begin with a remark. Handel's statement is exactly as our statement of Theorem \ref{th:handel-gen} setting $K=\A$, except that part (2) only mentions an invariant measure (instead of ergodic). However, it follows from the ergodic decomposition theorem that if such an invariant measure exists, then an ergodic measure with the same property exists as well. 

From now on we assume that $K$, $f$ and $F$ are as in the statement of Theorem \ref{th:handel-gen}. We assume that $\cA=\R\times [0,1]$ is endowed with the euclidean metric $d$ and $\A$ by the induced metric (which we still denote by $d$).

The first part of the proof involves minor modifications to Handel's proof.  We will outline the main steps of Handel's proof, adapting what is needed to our setting. We begin with a small modification of \cite[Lemma 2.1]{MR1037109}: 
\begin{lemma} If $x\in K$ and $\alpha\in \ol{\rho}(F,x)$, then either $\rho(F,\omega_f(x))=\{\alpha\}$ or there are periodic orbits in $K$ with prime period and with rotation numbers arbitrarily close to $\alpha$.
\end{lemma}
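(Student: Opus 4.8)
The plan is to reduce the statement to Theorem~\ref{th:CR-reali} applied to the $\omega$-limit set $\Omega:=\omega_f(x)$. This set is a nonempty compact $f$-invariant subset of $K$ (since $K$ is closed and $f$-invariant and $x\in K$), and it is chain transitive for $f|_K$: this is the classical fact that the $\omega$-limit set of a point under a homeomorphism of a compact metric space is internally chain transitive, applied here to the homeomorphism $f|_K$ of the compact set $K$. By Proposition~\ref{pro:omega} we have $\alpha\in\overline{\rho}(F,x)\subset\overline{\rho}(F,\Omega)=[\rho_\infe(F,\Omega),\rho_\supe(F,\Omega)]$, and I would then split the argument according to whether this interval is degenerate.

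If $\rho_\infe(F,\Omega)=\rho_\supe(F,\Omega)$, then by \eqref{eq:measure} this common value lies in $\rho(F,\Omega)$, which in turn is contained in $\overline{\rho}(F,\Omega)$; hence $\rho(F,\Omega)$ is a singleton, and since $\alpha$ belongs to it we obtain $\rho(F,\Omega)=\{\alpha\}$, the first alternative. Otherwise $\overline{\rho}(F,\Omega)$ is a non-degenerate closed interval containing $\alpha$, so it contains rationals arbitrarily close to $\alpha$ (approaching from whichever side keeps us inside the interval). Writing each such rational in lowest terms as $p/q$, the first part of Theorem~\ref{th:CR-reali} applies — since $\Omega\subset K$ is compact, $f$-invariant and chain transitive for $f|_K$, and $p/q\in\overline{\rho}(F,\Omega)$ — and yields a $(p,q)$-periodic point in $K$. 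Such a point has rotation number $p/q$, and since $\gcd(p,q)=1$ its prime period is exactly $q$; its $f$-orbit is therefore a periodic orbit contained in $K$ of prime period $q$ and rotation number $p/q$. Letting $p/q$ range over a sequence of such rationals converging to $\alpha$ gives the second alternative.

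This is essentially a repackaging of Proposition~\ref{pro:omega} and Theorem~\ref{th:CR-reali}, so I do not expect a genuine obstacle. The two points worth spelling out carefully are the internal chain transitivity of $\omega_f(x)$ within $K$ (which is exactly what licenses the use of Theorem~\ref{th:CR-reali}) and the elementary verification that a $(p,q)$-periodic point with $\gcd(p,q)=1$ has rotation number $p/q$ and minimal period $q$; neither presents a real difficulty.
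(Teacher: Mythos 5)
Your approach coincides with the paper's: combine Proposition~\ref{pro:omega} (which gives $\alpha\in\overline{\rho}(F,\omega_f(x))$) with Theorem~\ref{th:CR-reali} applied to the internally chain transitive set $\omega_f(x)\subset K$. The degenerate case via \eqref{eq:measure} and the chain-transitivity justification are both fine.

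The one genuine issue concerns the phrase ``prime period.'' You normalize $p/q$ to lowest terms and conclude the resulting $(p,q)$-periodic orbit has \emph{minimal} period $q$, reading ``prime period'' as ``least period.'' The paper's proof, however, explicitly chooses the denominator $q$ to be a \emph{prime number}, and that is the intended meaning here: it comes from Handel's original Lemma~2.1, where the primality of the periods is exploited later in the pseudo-Anosov construction, and the ``least period'' reading would make the phrase vacuous (every periodic orbit has a least period). As written, your argument produces orbits of minimal period $q$ for various integers $q$, but $q$ need not be prime. The fix is small: since $\overline{\rho}(F,\omega_f(x))$ contains a nondegenerate interval $(\alpha-\epsilon,\alpha]$ or $[\alpha,\alpha+\epsilon)$, for every sufficiently large prime $q$ one can pick $p$ so that $p/q$ lies in this interval, is arbitrarily close to $\alpha$, and is not an integer; then $q$ prime and $q\nmid p$ force $\gcd(p,q)=1$, so the $(p,q)$-periodic point from Theorem~\ref{th:CR-reali} has period exactly $q$, a prime, with rotation number $p/q$.
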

\begin{proof}
By Proposition \ref{pro:omega}, we know that $\alpha\in \ol{\rho}(F,\omega_f(x))$. If the latter set has more than one element, then it contains an interval of the form $(\alpha-\epsilon, \alpha]$ or $[\alpha, \alpha+\epsilon)$. In either case we may find a rational $p/q\in \ol{\rho}(F,\omega_f(x))$ arbitrarily close to $\alpha$ with $q$ prime. Since $\omega_f(x)$ is chain transitive for $f|_{K}$, by Theorem \ref{th:CR-reali} this rational is realized by a periodic orbit (of period $q$) in $K$.
\end{proof}

The next lemma (which corresponds to \cite[Lemma 2.2]{MR1037109}) is proved similarly, noting that the Hausdorff limit of chain transitive sets is chain transitive.
\begin{lemma}\label{lem:omega-handel} Suppose that $Y_i$ is an $\omega$-limit set of a point of $K$, that $\alpha_i\in \ol{\rho}(F,Y_i)$ with $\alpha_i\to \alpha$ as $i\to \infty$, and that $Y_i\to Y$ in the Hausdorff topology. Then either $\rho(F,Y)=\{\alpha\}$ or there exist periodic orbits in $K$ with prime periods and rotation numbers arbitrarily close to $\alpha$.
\end{lemma}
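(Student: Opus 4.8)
The plan is to run the argument of the previous lemma with $\omega_f(x)$ replaced by the Hausdorff limit $Y$; the only genuinely new ingredients are a semicontinuity statement, namely $\alpha\in\ol{\rho}(F,Y)$, and the observation (recorded just above the lemma) that $Y$ inherits chain transitivity.

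To see that $\alpha\in\ol{\rho}(F,Y)$, I would use that $\ol{\rho}(F,Y_i)=\{\rho(F,\mu):\mu\in\cM(f|_{Y_i})\}$ to pick, for each $i$, an $f$-invariant Borel probability $\nu_i$ supported on $Y_i$ with $\rho(F,\nu_i)=\alpha_i$. Viewing the $\nu_i$ as probabilities on the compact annulus $\A$ and passing to a weak-$*$ convergent subsequence $\nu_i\to\nu$, the limit $\nu$ is $f$-invariant and, since $Y_i\to Y$ in the Hausdorff topology and $\operatorname{supp}(\nu_i)\subset Y_i$, it is supported on $Y$; and since the displacement function $\phi_F$ is continuous, $\rho(F,\nu)=\int\phi_F\,d\nu=\lim_i\int\phi_F\,d\nu_i=\lim_i\alpha_i=\alpha$, so $\alpha\in\ol{\rho}(F,Y)$. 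This is the one step that needs a little care: weak-$*$ compactness of the space of Borel probabilities on the compact annulus $\A$, together with continuity of $\phi_F$, is precisely what prevents the candidate measures from losing mass or failing to converge. I would also note that $Y$ is a compact $f$-invariant subset of $K$ (being a Hausdorff limit of such) which is chain transitive for $f|_K$: each $Y_i$, as an $\omega$-limit set of a point of $K$, is chain transitive for $f|_K$, and chain transitivity passes to Hausdorff limits.

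Finally I would split into the same two cases as in the previous lemma. If $\ol{\rho}(F,Y)$ is a single point, then, since $\ol{\rho}(F,Y)=[\rho_\infe(F,Y),\rho_\supe(F,Y)]$ and $\{\rho_\infe(F,Y),\rho_\supe(F,Y)\}\subset\rho(F,Y)\subset\ol{\rho}(F,Y)$, the set $\rho(F,Y)$ equals that same point, which must be $\alpha$ because $\alpha\in\ol{\rho}(F,Y)$; hence $\rho(F,Y)=\{\alpha\}$. Otherwise $\ol{\rho}(F,Y)$ is a nondegenerate interval containing $\alpha$, so it contains rationals $p/q$ arbitrarily close to $\alpha$ with $q$ prime and $q\nmid p$; applying Theorem~\ref{th:CR-reali} to the chain transitive set $Y$, each such $p/q$ is realized by a $(p,q)$-periodic point of $K$, whose $f$-orbit has period exactly the prime $q$ since $\gcd(p,q)=1$. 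This yields periodic orbits in $K$ of prime period with rotation numbers arbitrarily close to $\alpha$, the second alternative of the lemma.
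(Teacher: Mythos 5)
Your proposal is correct and follows the same route the paper indicates: the paper's ``proof'' is a one-line remark that the argument of the preceding lemma carries over, with the extra observation that Hausdorff limits of chain transitive sets are chain transitive, and your write-up simply fills in the two pieces that remark leaves implicit. Your weak-$*$ compactness argument for $\alpha\in\ol{\rho}(F,Y)$ is the standard upper-semicontinuity step (replacing the role of Proposition~\ref{pro:omega} in the earlier lemma), and the rest (chain transitivity of $Y$, the dichotomy on whether $\ol{\rho}(F,Y)$ degenerates, and the appeal to Theorem~\ref{th:CR-reali} to realize rationals $p/q$ with $q$ prime) is exactly the structure of the previous lemma, so this is the same approach, just made explicit.
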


We say that $\alpha$ satisfies the pA-hypothesis on $K$ if there exist periodic orbits $X_i\subset K$, a closed invariant set $X\subset K$, an invariant measure $\mu$ with support in $X$, a (not necessarily invariant) set $B\subset X$ of positive $\mu$-measure and $\epsilon>0$ such that:
\begin{itemize}
\item[(i)] $X_i\to X$ in the Hausdorff topology;
\item[(ii)] Either $\rho(F, X_i)<\alpha<\alpha+\epsilon < \rho(F,b)$ for all $b\in B$ and all sufficiently large $i$ or $\rho(F,X_i)>\alpha>\alpha- \epsilon > \rho(F,b)$ for all $b\in B$ and all sufficiently large $i$.
\end{itemize}

The next result corresponds to \cite[Proposition 2.3]{MR1037109}, and its proof is identical (using the preceding lemmas instead of their counterparts). 

\begin{proposition}\label{pro:cases} For all $\alpha$ in the closure of $\rho(F,K)$, one of the following holds:
\begin{itemize}
\item[(1)] There is a compact invariant set $Q_\alpha\subset K$ such that $\rho(F,Q_\alpha) = \{\alpha\}$;
\item[(2)] There is an invariant Borel probability measure $\mu$ supported on $K$ such that $\rho(F,\mu)=\alpha$;
\item[(3)] $\alpha$ satisfies the $pA$-hypothesis on $K$.
\end{itemize}
Moreover, the set of values of $\alpha$ that satisfy (2) but not (1) or (3) is discrete.
\end{proposition}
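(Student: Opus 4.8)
The plan is to follow Handel's proof of \cite[Proposition~2.3]{MR1037109}: its only ingredients that refer to the full annulus are analogues of \cite[Lemma~2.1]{MR1037109} and \cite[Lemma~2.2]{MR1037109} and Franks' realization of rational rotation numbers, and these are replaced by the first lemma of this section, by Lemma~\ref{lem:omega-handel} (with Proposition~\ref{pro:omega}), and by Theorem~\ref{th:CR-reali}, respectively. Apart from these, the argument uses only general properties of rotation sets on the annulus recalled earlier — that $\ol{\rho}(F,Y)$ is a closed interval whose endpoints are realized by ergodic measures supported in $Y$, that $\ol{\rho}(F,Y)=\{\alpha\}$ forces every point of $Y$ to have well-defined rotation number $\alpha$, and that the displacement function $\phi_F$ is continuous — none of which is sensitive to the topology of $K$, so the proof carries over. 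I outline the steps.

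Fix $\alpha$ in the closure of $\rho(F,K)$; if alternative (1) holds for $\alpha$ we are done, so assume it fails. First I would set up the dichotomy producing periodic orbits. If $\alpha\in\rho(F,K)$, pick $x\in K$ with $\rho(F,x)=\alpha$; since (1) fails we cannot have $\rho(F,\omega_f(x))=\{\alpha\}$, so the first lemma of this section yields periodic orbits of prime period in $K$ whose rotation numbers tend to $\alpha$. If $\alpha$ is merely a limit point of $\rho(F,K)$, choose $x_i\in K$ with $\rho(F,x_i)\to\alpha$, pass to a subsequence along which $\omega_f(x_i)\to Y$ in the Hausdorff topology, and apply Lemma~\ref{lem:omega-handel} (the alternative $\rho(F,Y)=\{\alpha\}$ being excluded, since it would give (1) with $Q_\alpha=Y$) to reach the same conclusion. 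Either way we obtain periodic orbits $X_i\subset K$ of prime period with $\rho(F,X_i)\to\alpha$.

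Next I would take a Hausdorff limit and extract alternative (2). Passing to a subsequence, $X_i\to X$ for a compact $f$-invariant set $X\subset K$, and the numbers $\rho(F,X_i)$ are eventually all equal to, all less than, or all greater than $\alpha$. If infinitely many equal $\alpha$, then $\alpha=p/q$ and any single $X_i$ is a $(p,q)$-periodic orbit with $\rho(F,X_i)=\{\alpha\}$, so (1) holds; hence the $\rho(F,X_i)$ lie strictly on one side of $\alpha$, and by the symmetry of $\cA$ given by $(x,y)\mapsto(-x,y)$ — which carries $K$ to another essential annular continuum and reverses the sign of every rotation number — we may assume $\rho(F,X_i)<\alpha$. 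Let $\nu_i$ be the $f$-invariant probability equidistributed on $X_i$, so that $\rho(F,\nu_i)=\int\phi_F\,d\nu_i=\rho(F,X_i)\to\alpha$. A weak-$*$ accumulation point $\nu$ of $(\nu_i)$ is $f$-invariant, is supported on $X$ because $X_i$ is eventually contained in every neighborhood of $X$, and satisfies $\rho(F,\nu)=\int\phi_F\,d\nu=\alpha$ by continuity of $\phi_F$; an ergodic component of $\nu$ then realizes alternative (2).

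Finally I would refine this using that $\ol{\rho}(F,X)=[\rho_\infe(F,X),\rho_\supe(F,X)]$ contains $\alpha$. If $\rho_\supe(F,X)>\alpha$, pick an ergodic $\mu$ on $X$ realizing $\rho_\supe(F,X)$; then $\rho(F,b)=\rho_\supe(F,X)$ for $\mu$-almost every $b$, and with $\epsilon=(\rho_\supe(F,X)-\alpha)/2$ and $B=\{b\in X:\rho(F,b)=\rho_\supe(F,X)\}$ the first alternative of the $pA$-hypothesis holds, so (3) holds. If $\rho_\supe(F,X)=\rho_\infe(F,X)$ (necessarily then equal to $\alpha$), then $\ol{\rho}(F,X)=\{\alpha\}$, every point of $X$ has rotation number $\alpha$, and (1) holds with $Q_\alpha=X$, against our assumption. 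Thus $\rho_\infe(F,X)<\alpha=\rho_\supe(F,X)$; together with its mirror image (the $X_i$ approaching $\alpha$ from above and $\alpha=\rho_\infe(F,X)$) this is exactly the configuration in which only (2) has been obtained. For the last assertion one shows, following \cite{MR1037109}, that a value $\alpha$ of this kind is isolated in the closure of $\rho(F,K)$ on the side opposite to $\ol{\rho}(F,X)$, whence the set of such values is discrete. This is the delicate point of Handel's argument, and the step I would expect to require the most care; but it rests only on the ingredients above and on soft properties of rotation sets — nothing about the topology of $K$ — so it transfers unchanged.
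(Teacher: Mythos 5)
Your proposal is correct and follows essentially the same route as the paper, which simply declares Handel's proof of \cite[Proposition 2.3]{MR1037109} applicable verbatim once the preceding lemmas (the unlabeled lemma opening this section, Lemma~\ref{lem:omega-handel}, and Theorem~\ref{th:CR-reali}) are substituted for their closed-annulus counterparts. Your reconstruction of the argument is faithful.

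Two small points in the write-up. (i) In the step producing alternative (2), it is the limit measure $\nu$ itself that realizes (2), not ``an ergodic component of $\nu$'': an ergodic component can have a mean rotation number different from $\alpha$, since $\rho(F,\nu)$ is only the $\nu$-average over the ergodic decomposition. If you want an ergodic measure at this point, take instead the ergodic measure on $X$ realizing $\rho_\supe(F,X)=\alpha$ furnished by \eqref{eq:measure}. (ii) One-sided isolation of $\alpha$ in $\ol{\rho(F,K)}$ does not by itself give discreteness of the exceptional set; you also need that no exceptional values accumulate at $\alpha$ from the $\ol{\rho}(F,X)$ side. This holds because every $\beta\in(\rho_\infe(F,X),\alpha)$ already satisfies (3), using the same $X_i$, the same $X$, and any ergodic measure on $X$ realizing $\rho_\infe(F,X)$ (the second alternative of the $pA$-hypothesis then holds since eventually $\rho(F,X_i)>\beta$). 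Both points are handled in Handel's argument and, as you note, are insensitive to the topology of $K$.
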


Note that cases (1) and (2) (the latter holding only on a discrete set of values of $\alpha$) satisfy the statement of Theorem \ref{th:handel-gen}. The final and most difficult step to prove the theorem is to show that in case (3), \ie if $\alpha$ satisfies the $pA$-hypothesis on $K$, then case (1) also holds.

Recall that a homeomorphism $\phi:\A\to \A$ is pseudo-Anosov relative to a finite invariant set $R$ if, letting $N$ be the compact surface obtained from $\A\sm R$ by compactifying each puncture with a boundary circle and $p\colon N\to \A$ the map that collapses these boundary circles to points, there is a pseudo-Anosov homeomorphism $\Phi\colon N\to N$ such that $\phi p = p\Phi$ (see \cite{MR3053012,MR956596}). An equivalent definition is to say that $\phi$ satisfies the usual definition of a pseudo-Anosov map except that the stable/unstable foliations have one-prong singularities at points of $R$; see \cite{MR1308491}.

From \cite[Proposition 3.1]{MR1037109}, we have:
\begin{proposition}\label{pro:pA-hyp}
If $\alpha$ satisfies the pA-hypothesis on $K$, then there exists $n>0$, an $f^n$-invariant finite set $R\subset K$ and a homeomorphism $\phi\colon \A\to \A$ such that:
\begin{itemize}
\item $\phi$ is pseudo-Anosov relative to $R$,
\item $\phi$ is homotopic to $f^n$ relative to $R$,
\item $n\alpha\in \inter \rho(\hat{\phi})$, where $\hat{\phi}\colon \cA\to \cA$ is the lift of $\phi$ obtained by lifting a homotopy from $f^n$ to $\phi$ (rel $R$) to a homotopy from $F^n$ to $\hat{\phi}$.
\end{itemize}
\end{proposition}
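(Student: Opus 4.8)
\emph{Proof strategy (an adaptation of the corresponding argument in \cite{MR1037109}; we only outline the steps).} The plan is to apply the Nielsen--Thurston classification to a suitable power of $f$ relative to a finite invariant subset of $K$ assembled from the periodic orbits furnished by the $pA$-hypothesis, and then to show that the only outcome compatible with the ``jump'' in rotation numbers encoded in that hypothesis is a pseudo-Anosov normal form having $n\alpha$ in the interior of its rotation set. Concretely, using the lemmas preceding Proposition \ref{pro:cases} one may assume the periodic orbits $X_i$ in the $pA$-hypothesis have prime period and rotation numbers tending to $\alpha$ while remaining strictly on one side of it; choose finitely many of them, say $X_{i_1},\dots,X_{i_r}$, with pairwise distinct rotation numbers and with $X_{i_1}$ having at least three points, set $R=X_{i_1}\cup\dots\cup X_{i_r}\subset K$, and let $n$ be a common multiple of their periods, so that $f^n$ fixes $R$ pointwise. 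Let $N$ be the compact surface obtained from $\A\sm R$ by compactifying each puncture with a boundary circle, and let $\phi\colon \A\to\A$ be a Nielsen--Thurston representative of the isotopy class rel $R$ of $f^n$: up to isotopy rel $R$, $\phi$ is finite order, reducible, or pseudo-Anosov relative to $R$, and it is homotopic to $f^n$ rel $R$ by construction. Normalising the lift $\hat\phi$ as in the statement (lifting a homotopy from $f^n$ to $\phi$), one checks that for $\tilde x\in\pi^{-1}(R)$ the path $t\mapsto \hat H_t(\tilde x)$ is a lift of a constant path, hence $\hat\phi(\tilde x)=F^n(\tilde x)$; consequently $\rho(\hat\phi,\tilde x)=n\,\rho(F,\pi(\tilde x))$, so $\rho(\hat\phi)$ contains the $r$ distinct values $n\,\rho(F,X_{i_j})$, all of which lie strictly below $n\alpha$.

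Next I would eliminate the first two cases. If $\phi$ has finite order then, being an orientation-preserving periodic homeomorphism of $\A$ isotopic to the identity, it is conjugate to a rigid rotation; since it fixes the nonempty interior set $R$, that rotation must be trivial, so $\phi=\id$ and $\hat\phi=T^m$ for some integer $m$, forcing the values $n\,\rho(F,X_{i_j})$ all to equal $m$ --- contradicting that they were chosen distinct. If $\phi$ is reducible, let $\Gamma$ be a $\phi$-invariant essential multicurve in $N$ and proceed by induction on the complexity of $N$ (here measured by $\#R$): a curve of $\Gamma$ is either essential in $\A$, in which case it decomposes $\A$ into $\phi$-invariant essential sub-annuli and the boundary rotation numbers of these annuli must be consistent with (and interpolate between) those of the marked orbits they contain; or it bounds a disk in $\A$ carrying several marked points, which may be collapsed. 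In either case one extracts a pseudo-Anosov (or finite-order) model on a piece that still contains two of the $X_{i_j}$ with distinct rotation numbers and still ``feels'' the Hausdorff limit set $X$, so the inductive hypothesis applies. The delicate point here, and the reason the set $B$ and the constant $\epsilon$ appear in the $pA$-hypothesis, is that the reduction must not be allowed to separate the periodic orbits $X_i$ from the limit set $X$ on which there is an invariant measure of rotation number exceeding $\alpha+\epsilon$: it is precisely this separation that a reducing multicurve would have to achieve in order to ``resolve'' the discontinuity $\rho(F,X_i)<\alpha<\alpha+\epsilon$, and the entanglement of $X=\lim X_i$ with $R$ forbids it.

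It remains to treat the pseudo-Anosov case and show $n\alpha\in\inter\rho(\hat\phi)$. For a pseudo-Anosov $\phi$ relative to $R$ the rotation set $\rho(\hat\phi)$ is a closed interval whose endpoints are realised and are read off from the invariant train track together with the behaviour near the two ends of $\A$; we have already seen that this interval reaches strictly below $n\alpha$ through the orbits in $R$. For the opposite side one uses that the Hausdorff limit $X$ carries an $f^n$-invariant measure of rotation number greater than $n\alpha+n\epsilon$ and invokes Handel's global shadowing theory for pseudo-Anosov maps (applicable since $f^n$ is isotopic to $\phi$ rel $R$, and underlying the arguments of \cite{MR1037109}) to conclude that $\rho(\hat\phi)$ must likewise extend above $n\alpha+n\epsilon/2$; hence $n\alpha$ is interior. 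I expect this last transfer to be the main obstacle: $X$ is invariant under $f$ but not under $\phi$ (the homotopy is only rel $R$, and $X\not\subset R$), so relating the rotation behaviour on $X$ to that of the pseudo-Anosov model --- and, in the reducible case, correctly bookkeeping rotation numbers across the Nielsen--Thurston decomposition --- is where the quantitative gap $\epsilon$ in the $pA$-hypothesis is indispensable and where the technical weight of the proof lies.
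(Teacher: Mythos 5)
The paper does not reprove this proposition: it is imported wholesale from \cite[Proposition~3.1]{MR1037109}, and the only thing the paper actually verifies is the one new claim that $R$ may be taken inside $K$. That verification is a single observation: in Handel's proof, $R$ is assembled from (subsets of) the periodic orbits $X_i$ appearing in the $pA$-hypothesis, and under the $pA$-hypothesis \emph{on $K$} those orbits lie in $K$, so $R\subset K$. You do in fact capture this key point --- your construction $R=X_{i_1}\cup\dots\cup X_{i_r}\subset K$ is exactly the observation the paper needs --- and your computation that $\hat\phi$ agrees with $F^n$ on $\pi^{-1}(R)$ (lifting a homotopy rel $R$) is correct and is the right way to see why the marked-orbit rotation numbers persist.

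Where your proposal falls short is that, having chosen to reprove Handel's proposition rather than cite it, you would need to actually close the Nielsen--Thurston case analysis, and the two hardest parts are left as sketches with self-acknowledged gaps. (a) The reducible case: your discussion of how a reducing multicurve would have to ``separate the $X_i$ from $X$'' and why this is forbidden by their Hausdorff entanglement is a plausible heuristic but is not an argument; in Handel's proof this is handled by a careful choice of two orbits $Y_1, Y_2$ (not an arbitrary finite family) together with \cite[Lemmas~3.2, 3.3]{MR1037109}, and none of that machinery is reproduced. (b) The interiority of $n\alpha$: showing $\rho(\hat\phi)$ reaches \emph{above} $n\alpha$ requires transferring rotation information from the $f^n$-invariant set $X$ (which is not $\phi$-invariant and not contained in $R$) to the pseudo-Anosov model. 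You correctly identify this as the main obstacle, but invoking ``global shadowing theory'' without specifying how the measure $\mu$ and the set $B$ of positive measure enter leaves the crucial quantitative step unproved. Since these two points constitute essentially all of the technical content of Handel's Section 3, the proposal as written does not establish the proposition --- whereas the paper, by citing Handel and checking only the modification $R\subset K$, does.
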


The only change that we made in the statement is to require that the pA-hypothesis holds on $K$ (instead of on $\A$) and to claim that $R\subset K$. This is clear from Handel's proof; indeed, it suffices to note that in the proof of \cite[Proposition 3.1]{MR1037109}, the set $R$ is obtained as a subset of $Y_1\cup Y_2$, where $Y_1,Y_2$ are the sets in the statement of \cite[Lemma 3.2]{MR1037109}, which in turn are chosen among the periodic orbits $X_i$ from the definition of the $pA$-hypothesis (see \cite[Lemma 3.3 (i)]{MR1037109}). Since we assume the pA-hypothesis on $K$, the orbits $X_i$ are contained in $K$, so $R\subset K$.

We now give a slightly improved statement of \cite[Proposition 1.1]{MR1037109}, which is explicitly contained in its proof.
\begin{proposition}\label{pro:pseudo} Suppose that $\phi\colon \A\to \A$ is pseudo-Anosov relative to a finite invariant set $R$, and $\hat{\phi}$ is a lift to $\cA$. Then $\rho(\hat{\phi})$ is a closed interval, and if $r\in \inter \rho(\hat{\phi})$, there exists a compact invariant set $Q_r\subset \inter \A\sm R$ such that
$$M_r:=\sup\{\abss{\pr_1(\hat{\phi}^n(z)-z) - nr}: n\in \Z, \, z\in \pi^{-1}(Q_r)\}<\infty.$$
\end{proposition}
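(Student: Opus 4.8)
\emph{(Sketch, following Handel.)} Since the statement is, as noted, explicitly contained in the proof of \cite[Proposition 1.1]{MR1037109}, the plan is to recover it from the symbolic dynamics of the pseudo-Anosov map. Let $\Phi\colon N\to N$ be the pseudo-Anosov homeomorphism with $\phi p = p\Phi$, and fix a Markov partition of $N$ for $\Phi$. This produces a subshift of finite type $(\Sigma,\sigma)$ together with a finite-to-one continuous surjection $h\colon \Sigma\to N$ with $h\sigma = \Phi h$, which is injective on a residual set, takes periodic sequences to periodic points, and whose non-injectivity is concentrated over the boundary leaves through the singularities (in particular over $p^{-1}(R)$). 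The displacement function $\phi_{\hat\phi}\colon\A\to\R$, $\phi_{\hat\phi}(w) = \pr_1(\hat\phi(\tilde w)-\tilde w)$, is continuous and satisfies $\pr_1(\hat\phi^{\,n}(z)-z)=\sum_{i=0}^{n-1}\phi_{\hat\phi}(\phi^i(\pi z))$ for all $z\in\cA$, $n\in\Z$. Putting $g = p\circ h\colon\Sigma\to\A$ (so $g\sigma = \phi g$) and $\psi = \phi_{\hat\phi}\circ g\colon\Sigma\to\R$, one gets $\pr_1(\hat\phi^{\,n}(z)-z) = S_n\psi(s)$ whenever $\pi(z)=g(s)$, where $S_n\psi$ is the Birkhoff cocycle of $\psi$.

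Applying the standard facts on rotation sets recalled in Section 2 to the homeomorphism $\phi$ of the closed annulus, the set $[a,b] := \{\rho(\hat\phi,\mu):\mu\in\cM(\phi|_{\A})\}$ is a closed interval; by \eqref{eq:measure} its endpoints are realized by ergodic measures, hence (by Birkhoff) by points, so $a,b\in\rho(\hat\phi)\subset[a,b]$. It remains to realize each $r\in(a,b)$, which will fall out of the construction below together with the bounded-deviation claim.

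Fix $r\in(a,b)$. Choosing ergodic measures whose rotation numbers lie on either side of $r$, passing to generic orbits and closing them up in $\Sigma$, we obtain periodic points $s_-,s_+\in\Sigma$, which after replacing $\Phi$ by a power we may assume share a common period $N$ and lie in a single mixing component of $\Sigma$, with $S_N\psi(s_-)<rN<S_N\psi(s_+)$; taking $N$ large we may also fix admissible connecting words so that, writing $W_\pm$ for the block consisting of one period of $s_\pm$ followed by its connecting word, every bi-infinite concatenation $\cdots W_{\epsilon_{-1}}W_{\epsilon_0}W_{\epsilon_1}\cdots$ with $\epsilon_j\in\{-,+\}$ is a legal point of $\Sigma$, while the block increments $c_\pm := S\psi(W_\pm) - r\,\length(W_\pm)$ still satisfy $c_-<0<c_+$. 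A Sturmian choice of the pattern $(\epsilon_j)_{j\in\Z}$, with $+$-frequency $\theta$ determined by $\theta c_+ + (1-\theta)c_- = 0$, keeps the partial sums $\sum_{j<n}c_{\epsilon_j}$ inside a fixed bounded window. Let $Y\subset\Sigma$ be the closed $\sigma$-invariant set of all points arising from such bounded-window patterns and set $Q_r = g(Y)\subset\A$, a compact $\phi$-invariant set. For $z\in\pi^{-1}(Q_r)$ we have $\pi(z)=g(s)$ with $s\in Y$; the block-endpoint values of $S_n\psi(s)-nr$ lie in the fixed window, and within a single block this quantity changes by at most $\length(W_\pm)(\norm{\psi}_\infty+\abss{r})$, so $\abss{S_n\psi(s)-nr}$ is bounded uniformly in $n\ge 0$, hence (using $\sigma$-invariance of $Y$) uniformly in all $n\in\Z$. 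Therefore $M_r<\infty$. An invariant measure on $Y$ with asymptotic $+$-frequency $\theta$ has $\psi$-average $r$, so $r\in\rho(\hat\phi)$ and thus $\rho(\hat\phi)=[a,b]$. Finally, every $s\in Y$ contains infinitely many $s_-$- and $s_+$-blocks, hence $h(s)$ is none of the finitely many periodic points of $N$ collapsed by $p$, so $Q_r\cap R=\emptyset$; and choosing $s_\pm$ off the boundary circles gives $Q_r\subset\inter\A\sm R$.

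\textbf{Main obstacle.} The delicate point is the construction in the third paragraph: making sure the concatenations remain admissible (which forces the passage to a power and to a mixing component of $\Sigma$, harmless for the statement) and that inserting the connecting words does not destroy the straddling inequality $c_-<0<c_+$, together with the bookkeeping that simultaneously pins the random walk $\sum c_{\epsilon_j}$ to a bounded window and keeps its mean slope equal to $r$. All of this is exactly what is carried out in the proof of \cite[Proposition 1.1]{MR1037109}.
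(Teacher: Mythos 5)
Your reconstruction follows the same Markov partition/Sturmian concatenation strategy as Handel's Proposition 1.1 in \cite{MR1037109}, which is exactly the source the paper cites without reproving (the paper's Addendum paraphrases the same construction: admissible sequences $U$, $V$ and a new sequence $S$ built from them to pin down the deviation). The only point that is stated imprecisely is the final step: $p$ collapses the boundary circles of $N$, not ``finitely many periodic points,'' so the correct reason $h(s)\notin\bd N$ is that boundary points have forward- or backward-eventually-periodic itineraries with tails lying on the boundary periodic orbits, which a non-eventually-periodic Sturmian concatenation with long blocks chosen off $\bd N$ cannot have (with a separate, easy check when $\theta$ is rational). Modulo that wording, this is the paper's argument.
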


\begin{addendum} Given $r_1,r_2\in \rho(\hat{\phi})$, one may choose the sets $Q_r$ for $r_1<r<r_2$ in a way that $Q_r \subset Q$ for some compact invariant subset $Q\subset \A\sm R$ and $M_r\leq M$ for some constant $M>0$ independent of $r$. \end{addendum}
\begin{proof}
From the proof of \cite[Proposition 1.1]{MR1037109} one sees that the sets $Q_r$ are obtained by fixing any $r_1,r_2\in \rho(\hat{\phi})$ such that $r_1<r<r_2$, choosing two admissible sequences $U$ and $V$ for a previously fixed Markov partition, and constructing a new admissible sequence $S$ using the sequences $U$ and $V$. As explained in the proof, if $y$ is a point with itinerary $S$ and $\hat{y}$ is a lift of $y$, the deviation $\abss{\pr_1(\hat{\phi}^n(\hat{y})-\hat{y}) - nr}$ is bounded by a uniform constant, which depends only on $U$ and $V$ (and not on $r$), so our statement follows.
Moreover, the fact that the sets $Q_r\subset \inter \A\sm R$ depends on choosing the sequences $U$ and $V$ long enough, which gives a bound on the distance to the boundary independent of $r$, so the set $Q=\ol{\bigcup_{r_1<r<r_2} Q_r}$ is a compact invariant subset of $\inter \A\sm R$.
\end{proof}

If $N_0$ is a surface (possibly with punctures) endowed with a metric $d_0$, with universal cover $\til{N}_0$ endowed with the lifted metric $\til{d}_0$, and if $\Phi,g\colon N\to N$ are homeomorphisms such that $\Phi$ is isotopic to $g$, we say that the $\Phi$-orbit of $x\in N$ globally $C$-shadows the $g$-orbit of $y\in N$ (with respect to $d_0$) if, given two equivariantly isotopic lifts $\til{\Phi}, \til{g}\colon \til{N}\to\til{N}$ of $\Phi$ and $g$ to the universal cover there exist lifts $\til{x}$ $\til{y}$ of $x$ and $y$ such that $\til{d}_0(\til{\Phi}^n(\til{x}),\til{g}^n(\til{y}))\leq C$ for all $n\in \Z$. In this case we write $(\Phi,x)\widesim{C} (g, y)$, or simply $(\Phi,x)\widesim{}(g,y)$ if the constant $C$ is unspecified. This definition does not depend on the choice of the lifts.

Suppose now that $N$ is a compact surface (possibly with boundary) and $N_0=\inter N$, and let $\Phi\colon N\to N$ be a pseudo-Anosov homeomorphism. Then there exists a metric $D$ on $N_0$, which we call a pA-metric for $\Phi$, which lifts to an equivariant metric $\til{D}$ on $\til{N}_0$ of the form $\til{D}=\sqrt{\til{D}_s^2+\til{D}_u^2}$, where $\til{D}_s, \til{D}_u\colon \til{N}_0\times \til{N}_0\to [0,\infty)$ are equivariant functions for some $\lambda>1$ one has $\til{D}_u(\til{\Phi}(\til{x}), \til{\Phi}(\til{y}))= \lambda \til{D}_u(\til{x}, \til{y})$ and $\til{D}_s(\til{\Phi}^{-1}(\til{x}), \til{\Phi}^{-1}(\til{y}))= \lambda \til{D}_s(\til{x}, \til{y})$ for all $\til{x},\til{y}\in \til{N}_0$ and every lift $\til{\Phi}$ of $\Phi|_{N_0}$ (see \cite[\S 1 and Remark 2.4]{MR805836} and references therein). 
We use this metric for the next statements.

The following result is contained in \cite{MR805836}:
\begin{proposition}\label{pro:shadow-uniform} If $g\colon N\to N$ is isotopic to $\Phi$, then there exists a constant $C>0$ such that $(\Phi|_{N_0}, x)\widesim{}(g|_{N_0},y)$ if and only if $(\Phi|_{N_0},x)\widesim{C}(g|_{N_0},y)$. If $(\Phi|_{N_0},x_n)\widesim{}(g|_{N_0},y_n)$ where $x_n\to x$ and $y_n\to y$ as $n\to \infty$, then $(\Phi|_{N_0}, x)\widesim{} (g|_{N_0}, y)$. 
\end{proposition}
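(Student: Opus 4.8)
The plan is to deduce both assertions from the standard shadowing/expansivity properties of pseudo-Anosov maps with respect to a pA-metric, following the arguments in \cite{MR805836}. For the first assertion (uniform shadowing constant), the key point is the \emph{expansivity} of $\Phi|_{N_0}$ in the pA-metric: there is a constant $c_0>0$ (an expansivity constant) such that if $\til D(\til\Phi^n(\til a),\til\Phi^n(\til b))\le c_0$ for all $n\in\Z$, then $\til a=\til b$. Given this, if $(\Phi|_{N_0},x)\widesim{}(g|_{N_0},y)$ with some a priori constant $C'$, one uses the product structure $\til D=\sqrt{\til D_s^2+\til D_u^2}$ together with the uniform hyperbolic estimates $\til D_u(\til\Phi(\cdot),\til\Phi(\cdot))=\lambda\til D_u(\cdot,\cdot)$ and $\til D_s(\til\Phi^{-1}(\cdot),\til\Phi^{-1}(\cdot))=\lambda\til D_s(\cdot,\cdot)$ to perform a ``local product'' adjustment: one replaces $\til y$ by the point $\til y'$ obtained by sliding along local stable and unstable leaves so that $\til y'$ is on the same stable leaf as $\til x$ for the future and the same unstable leaf for the past; the contraction then forces $\til D(\til\Phi^n(\til x),\til\Phi^n(\til y'))\le C$ for all $n$, where $C$ depends only on $\Phi$, $\lambda$, and the geometry of the pA-metric (not on $x,y$, nor on the original $C'$). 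The subtlety is that $\til y'$ must still be a lift of an actual $g$-orbit point; this is handled exactly as in \cite{MR805836}, using that the singular leaves foliate $\til N_0$ and that the sliding is by a bounded amount, so $\til y'$ lies on the $\til g$-orbit of the appropriate lift of $y$ after possibly adjusting the choice of lift.

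For the second assertion (closedness under limits), I would argue as follows. By the first assertion, for each $n$ there are lifts $\til x_n,\til y_n$ of $x_n,y_n$ with $\til D(\til\Phi^k(\til x_n),\til g^k(\til y_n))\le C$ for all $k\in\Z$, with $C$ uniform. Passing to a subsequence, we may assume $\til x_n\to\til x$ for some lift $\til x$ of $x$ (choosing the lifts in a fixed fundamental domain); then the uniform bound $\til D(\til x_n,\til y_n)\le C$ forces the $\til y_n$ to lie in a bounded region, so after a further subsequence $\til y_n\to\til y'$ for some lift $\til y'$ of $y$. Since $\til\Phi^k$ and $\til g^k$ are continuous and the estimate $\til D(\til\Phi^k(\til x_n),\til g^k(\til y_n))\le C$ is closed, passing to the limit gives $\til D(\til\Phi^k(\til x),\til g^k(\til y'))\le C$ for every fixed $k$, hence $(\Phi|_{N_0},x)\widesim{C}(g|_{N_0},y)$. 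One must check that the equivariance class of the chosen lifts of $\Phi$ and $g$ is preserved in the limit, which is automatic since we use a single fixed pair of equivariantly isotopic lifts $\til\Phi,\til g$ throughout.

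The main obstacle, and the point requiring the most care, is the first assertion: making the shadowing constant genuinely independent of the orbits $x,y$. This is where the structure of the pA-metric is essential — a generic metric would only give a constant depending on the pair of orbits. The argument hinges on the exact (not merely approximate) multiplicative behavior of $\til D_s$ and $\til D_u$ under $\til\Phi^{\pm1}$, which lets one run the standard ``canonical coordinates/local product structure'' construction with completely uniform constants; the singularities of the foliations (including the one-prong singularities at $R$) cause no trouble because the pA-metric is defined on all of $N_0=\inter N$ and the leaf sliding stays within $N_0$. I would therefore present this step by quoting the relevant estimates from \cite[\S1 and Remark~2.4]{MR805836} and indicating how the local product structure gives the uniform constant $C$, rather than reproving expansivity and the product structure from scratch.
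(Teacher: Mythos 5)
The paper does not prove this proposition; it simply states that the result is ``contained in \cite{MR805836}'', so there is no internal argument to compare your sketch against. Judged against the cited source, your outline has the right ingredients. The second paragraph of your proposal (passing to limits via a uniform shadowing constant, extracting subsequences of lifts in a fixed fundamental domain, continuity of the lifted maps) is correct and is essentially the closedness argument underlying \cite[Theorem 2]{MR805836}.

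The first part, however, is muddled at precisely the step that produces the uniform constant. The mechanism in \cite{MR805836} is: since $g$ and $\Phi$ are isotopic and $N$ is compact, $C_1:=\sup_z\til D(\til\Phi(z),\til g(z))<\infty$ (for $\til g$ equivariantly isotopic to $\til\Phi$), so the $\til g$-orbit of $\til y$ is a $C_1$-pseudo-orbit for $\til\Phi$. The shadowing lemma for pseudo-Anosov maps (which is where the product structure $\til D=\sqrt{\til D_s^2+\til D_u^2}$ and the exact multiplicative rates enter) yields a unique $\til\Phi$-orbit that $C_2(C_1,\Phi)$-shadows this pseudo-orbit, and since the $\til\Phi$-orbit of $\til x$ already shadows it at some unknown finite distance $C'$, expansivity forces these two $\til\Phi$-orbits to coincide, giving $\til D(\til\Phi^n(\til x),\til g^n(\til y))\leq C_2=:C$ for all $n$. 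In your version, a point $\til y'$ lying on the stable leaf of $\til x$ for the future and on the unstable leaf of $\til x$ for the past is, by local product structure and expansivity, just $\til x$ itself; and the stated ``subtlety'' that $\til y'$ ``must still be a lift of an actual $g$-orbit point'' is a misstep: the shadowing orbit is a $\Phi$-orbit, not a $g$-orbit, and the point $\til y$ is never replaced. You should reframe this step around the pseudo-orbit $(\til g^n(\til y))_{n\in\Z}$ and the uniqueness of its $\til\Phi$-shadowing orbit, rather than around sliding $\til y$.
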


As a consequence, we have the following well known result, which is an improved statement of \cite[Theorem 2]{MR805836}:
\begin{proposition}\label{pro:semi} If $g\colon N\to N$ is isotopic to $\Phi$, then there is a $g$-invariant set $Y\subset N_0$ and a continuous surjection $h\colon Y\to N_0$ such that $h g|_{Y}=\Phi h$. 
Moreover, $Y$ is the set of all $y\in N_0$ for which there exists $x\in N_0$ such that $(g|_{N_0},y) \widesim{} (\Phi|_{N_0}, x)$, and $h(y)=x$ (which is unique).
\end{proposition}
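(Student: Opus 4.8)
The plan is to deduce Proposition \ref{pro:semi} from Proposition \ref{pro:shadow-uniform} by a standard ``shadowing + semiconjugacy'' argument. First I would define
$$Y = \{\, y\in N_0 : \exists\, x\in N_0 \text{ with } (g|_{N_0},y)\widesim{}(\Phi|_{N_0},x)\,\}$$
and, for $y\in Y$, set $h(y)$ to be the point $x$ appearing in this relation. The first task is to check that $h$ is \emph{well-defined}, i.e.\ that $x$ is unique. This is where expansiveness of the pseudo-Anosov map enters: if $(g|_{N_0},y)\widesim{C}(\Phi|_{N_0},x)$ and $(g|_{N_0},y)\widesim{C}(\Phi|_{N_0},x')$, then by the triangle inequality (working with equivariantly isotopic lifts and compatible lifts $\tilde x,\tilde x'$ of $x,x'$) the lifted orbits of $x$ and $x'$ under $\tilde\Phi$ stay within distance $2C$ of each other for all $n\in\Z$; since $\Phi$ is expansive on the compact surface $N$ (with the pA-metric, using the product structure $\tilde D=\sqrt{\tilde D_s^2+\tilde D_u^2}$ and the fact that $\tilde D_u$ expands by $\lambda$ forward while $\tilde D_s$ expands by $\lambda$ backward), a bi-infinite $2C$-shadowing forces $x=x'$.

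Next I would check that $Y$ is $g$-invariant and $h$ intertwines the dynamics. Invariance of $Y$ is immediate from the definition, since $(g|_{N_0},y)\widesim{}(\Phi|_{N_0},x)$ implies $(g|_{N_0},g(y))\widesim{}(\Phi|_{N_0},\Phi(x))$ (just shift the index $n$). This same observation, together with the uniqueness just established, gives $h(g(y))=\Phi(h(y))$, i.e.\ $hg|_Y=\Phi h$. Then comes continuity of $h$: this is exactly the content of the second assertion of Proposition \ref{pro:shadow-uniform}. If $y_n\to y$ with $y_n\in Y$, then passing to a subsequence we may assume $h(y_n)\to x$ in the compact surface $N$; I must rule out that $x$ lies on the boundary, but the uniform shadowing constant $C$ from Proposition \ref{pro:shadow-uniform} shows the $\tilde\Phi$-orbit of a lift of $h(y_n)$ stays within $\tilde D$-distance $C$ of the $\tilde g$-orbit of a lift of $y_n\in N_0$, and taking limits keeps everything at finite distance, so $x\in N_0$; then Proposition \ref{pro:shadow-uniform} gives $(g|_{N_0},y)\widesim{}(\Phi|_{N_0},x)$, whence $y\in Y$ and $h(y)=x$ by uniqueness. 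Since every subsequential limit of $(h(y_n))$ equals $h(y)$, we get $h(y_n)\to h(y)$, so $h$ is continuous.

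Finally, surjectivity of $h$: given any $x\in N_0$, I need a $y\in N_0$ with $(g|_{N_0},y)\widesim{}(\Phi|_{N_0},x)$. The clean way is to note $h(Y)$ is a $\Phi$-invariant subset of $N_0$, and is compact because $Y$ is closed (it is the set of $y$ for which the bounded-orbit-shadowing relation with some $x$ holds, and by the uniform-constant statement this is a closed condition — one can write $Y$ as a countable intersection over $n$ and over a dense set of candidate shadowing points, or simply invoke the second part of Proposition \ref{pro:shadow-uniform} to see $Y$ is sequentially closed in $N_0$, and then argue $h(Y)$ is closed in $N_0$ using continuity and properness). A nonempty compact $\Phi$-invariant subset of $N_0$ that is also all of $h(Y)$ must be all of $N_0$: indeed, the original shadowing theorem \cite[Theorem 2]{MR805836} already provides \emph{some} semiconjugacy onto $N_0$ (or, equivalently, that every $\Phi$-orbit is shadowed by a $g$-orbit — this is the existence half, contained in \cite{MR805836}), so $h(Y)=N_0$. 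I expect the main obstacle to be the bookkeeping around punctures: one must consistently work on the open surface $N_0=\inter N$ with the (incomplete) pA-metric, make sure the shadowing orbits do not escape to the ends, and handle the fact that $Y$ and $h(Y)$ need not be closed in $N$ but are closed in $N_0$ — all of which is already absorbed into the ``uniform constant'' and ``limit'' statements of Proposition \ref{pro:shadow-uniform}, so the argument reduces to invoking that proposition correctly together with expansiveness for uniqueness.
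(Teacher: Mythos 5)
The paper does not actually prove Proposition \ref{pro:semi}: it presents it as ``a consequence'' of Proposition \ref{pro:shadow-uniform} and ``an improved statement of \cite[Theorem 2]{MR805836}'', with no argument given. So there is no proof in the paper to compare against; your task is effectively to reconstruct the derivation. Your overall plan --- define $Y$ and $h$ via the global shadowing relation, uniqueness from expansiveness of the pA map in the product metric $\tilde D = \sqrt{\tilde D_s^2 + \tilde D_u^2}$, equivariance by an index shift, continuity from the second assertion of Proposition \ref{pro:shadow-uniform}, and surjectivity from the existence half of Handel's theorem --- is the right one and matches Handel's approach. The expansiveness/uniqueness and equivariance steps are correct.

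There is, however, a genuine gap in the continuity step. You write that ``taking limits keeps everything at finite distance, so $x\in N_0$''; this does not follow. When $N$ has boundary (which is precisely the case the paper needs, since $N$ is obtained by blowing up the finite set $R$), the pA-pseudo-metric $D$ does \emph{not} place $\bd N$ at infinite distance from interior points: boundary prong singularities sit at finite $D$-distance. So a $\tilde D$-bounded sequence $\tilde h(y_n)$ can perfectly well accumulate on $\tau^{-1}(\bd N)$ even though $\tilde y_n$ converges in $\tilde N_0$, and the bound $\tilde D(\tilde g^k \tilde y_n, \tilde\Phi^k \tilde h(y_n)) \le C$ by itself does not rule this out. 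Ruling out escape to the boundary requires a quantitative argument using the bi-infinite orbit and the hyperbolic expansion/contraction of $\tilde D_u$, $\tilde D_s$ near the boundary --- this is exactly the kind of estimate the paper carries out explicitly in its proof of Lemma \ref{lem:rel-shadow} (the choice of $\eta$, $k_0$ and $\delta$ there), and it cannot be waved away. As stated, your argument only works when $\bd N = \emptyset$.

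A smaller point: the surjectivity paragraph is logically muddled. You argue $h(Y)$ is compact and $\Phi$-invariant and then assert it must be all of $N_0$, but a compact invariant subset certainly need not be everything, and $Y$ being closed \emph{in $N_0$} does not make $Y$ or $h(Y)$ compact when $N_0$ is noncompact. The correct and much shorter argument is the one you mention in passing: Handel's Theorem 2 already produces a $g$-invariant set $Y'\subset N_0$ and a surjective semiconjugacy $h'\colon Y'\to N_0$ with the global shadowing property; the characterization of $Y$ and the uniqueness you proved then force $Y'\subset Y$ and $h|_{Y'} = h'$, so $h$ is onto. Proposition \ref{pro:shadow-uniform} alone cannot give surjectivity (it says nothing about existence of shadowing orbits), so citing the existence half of \cite[Theorem 2]{MR805836} is unavoidable and you should just do so directly.
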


To use these results for relative pseudo-Anosov maps, we have the following:
\begin{lemma} \label{lem:rel-shadow} Let $g\colon S\to S$ be a homeomorphism of a compact surface $S$, and assume that $g$ is isotopic relative to a finite set $R\subset S$ to a map $\phi\colon S\to S$ which is pseudo-Anosov relative to $R$. Let $S_0=\inter S\sm R$. Then, for each compact $\phi$-invariant set $X\subset  S_0$ there exists a compact $g$-invariant set $X^*\subset  S_0$ and a continuous surjection $h_*\colon X^*\to X$ such that $h_*g|_{X^*} = \phi h_*$. 

Moreover, using an adequate metric $D$ on $S_0$ (which depends only on $\phi$), the set $X^*$ consists of all $y\in S_0$ for which there exists $x\in X$ such that $(\phi|_{S_0}, x) \widesim{} (f|_{S_0}, y)$. Such $x$ is necessarily unique, and the map $h_*$ is defined as $h_*(y)=x$.

In addition, there exists $C>0$ such that $(\phi|_{S_0}, h_*(y)) \widesim{C} (f|_{S_0}, y)$ for all $y\in X^*$.
\end{lemma}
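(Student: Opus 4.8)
The plan is to reduce to the genuinely pseudo-Anosov case on the blown-up surface and then invoke the Fathi--Shub shadowing theory packaged in Propositions \ref{pro:semi} and \ref{pro:shadow-uniform}. First I would blow up $R$: let $N$ be the compact surface obtained from $S\sm R$ by compactifying each puncture with a boundary circle, and let $p\colon N\to S$ be the map collapsing these circles back to the points of $R$. Then $\inter N=\inter S\sm R=S_0$ and $p$ restricts to the identity on $S_0$. Since $\phi$ is pseudo-Anosov relative to $R$ there is a genuine pseudo-Anosov $\Phi\colon N\to N$ with $p\Phi=\phi p$, and since $g$ is isotopic to $\phi$ rel $R$, that isotopy blows up to an isotopy of $N$ between $\Phi$ and a homeomorphism $G\colon N\to N$ with $pG=gp$ (see \cite{MR3053012,MR956596}); in particular $G$ is isotopic to $\Phi$. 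Under the identification $S_0=\inter N$ one has $\Phi|_{S_0}=\phi|_{S_0}$ and $G|_{S_0}=g|_{S_0}$, so the two surfaces share a universal cover and equivariantly isotopic lifts, and global $C$-shadowing for $(\phi|_{S_0},g|_{S_0})$ is the same as for $(\Phi|_{\inter N},G|_{\inter N})$.

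Next I would take $D$ to be a pA-metric for $\Phi$ on $\inter N=S_0$ (it depends only on $\Phi$, hence only on $\phi$) and apply Proposition \ref{pro:semi} to $\Phi$ and $G$: this yields a $G$-invariant set $Y\subset\inter N$ and a continuous surjection $h\colon Y\to\inter N$ with $h\,G|_Y=\Phi h$, where $Y$ is exactly the set of those $y$ for which $(\Phi|_{\inter N},x)\widesim{}(G|_{\inter N},y)$ holds for some (necessarily unique) $x$, and $h(y)=x$. By Proposition \ref{pro:shadow-uniform} there is a single constant $C>0$ with $(\Phi|_{\inter N},h(y))\widesim{C}(G|_{\inter N},y)$ for all $y\in Y$, and the shadowing relation is stable under sequential limits.

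Now I would set $X^*:=h^{-1}(X)$ and $h_*:=h|_{X^*}$. Since $X\subset S_0=\inter N$ is contained in the image of $h$, the restriction $h_*\colon X^*\to X$ is a continuous surjection, $X^*$ coincides with $\{y\in S_0:(\phi|_{S_0},x)\widesim{}(g|_{S_0},y)\text{ for some }x\in X\}$, with $h_*(y)$ the unique such $x$, and $(\phi|_{S_0},h_*(y))\widesim{C}(g|_{S_0},y)$ for every $y\in X^*$. The set $X^*$ is $g$-invariant: $X$ is compact and $\phi$-invariant, hence $\Phi$-invariant, so $h(g(y))=\Phi(h(y))\in X$ for $y\in X^*$, and similarly with $g^{-1}$ using $\Phi^{-1}(X)=X$; consequently $h_*\,g|_{X^*}=\phi\,h_*$.

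The remaining, and main, point is that $X^*$ is a \emph{compact} subset of $S_0$, i.e. closed in $S$ and disjoint from $\bd S\cup R$. Closedness in $S_0$ is immediate from the limit-stability in Proposition \ref{pro:shadow-uniform}: if $y_k\in X^*$ with $y_k\to y\in S_0$, then $h(y_k)\in X$ subconverges to some $x\in X$, so $(\phi|_{S_0},x)\widesim{}(g|_{S_0},y)$ and $y\in X^*$. The delicate issue is that $X^*$ must not accumulate on the collapsed punctures or on $\bd S$; for this I would show that the Fathi--Shub shadowing set $Y$ is in fact a compact subset of $\inter N$ — so that $X^*=h^{-1}(X)$ is a closed subset of a compact set and hence compact — using the hyperbolicity of the pA-metric (the estimates $\til{D}_u(\til{\Phi}^{\,n+1}\!\cdot,\til{G}^{\,n+1}\!\cdot)\ge\lambda\,\til{D}_u(\til{\Phi}^{\,n}\!\cdot,\til{G}^{\,n}\!\cdot)-C_0$, with $C_0$ a uniform $\til{D}$-bound on the displacement between the chosen lifts of $\Phi$ and $G$, and symmetrically for $\til{D}_s$ backwards, confining each $G$-orbit in $Y$ to a bounded neighbourhood of the $\Phi$-orbit it shadows) together with the control of how the pA-metric degenerates near $\bd N$. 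I expect this confinement near the collapsed punctures and near $\bd S$ to be the only genuinely delicate step; everything else is bookkeeping around the blow-up and a direct application of Propositions \ref{pro:semi} and \ref{pro:shadow-uniform}.
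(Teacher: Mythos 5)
Your overall strategy is the same as the paper's: blow up $R$, apply the Fathi--Shub global shadowing machinery (Propositions \ref{pro:shadow-uniform} and \ref{pro:semi}) on the blown-up surface, and use hyperbolicity of the pA-metric to keep the shadowing set away from $\bd N$. But two of the steps you treat as routine are precisely where the work is, and as written they don't go through.

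First, the assertion that the isotopy rel $R$ blows up to an isotopy of $N$ between $\Phi$ and a homeomorphism $G\colon N\to N$ with $pG=gp$ is unjustified. The pA map $\phi$ blows up by definition, but a general homeomorphism $g$ fixing $R$ need not extend continuously to the boundary circles of $N$ (think of a homeomorphism that spirals infinitely around a point of $R$), and the isotopy need not either. The paper avoids this by \emph{not} insisting that $G$ covers $g$: one sets $G_0=p^{-1}g|_{S_0}p$ on $N_0$ and chooses $G\colon N\to N$ isotopic to $\Phi$ that agrees with $G_0$ only outside a small collar $B_{\delta_0}(\bd N)$, extended arbitrarily near $\bd N$. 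The price is that one must then argue that global shadowing for $G$ and for $G_0$ coincide over $X'$; this is handled by showing the relevant constants (including the $C$ of Proposition \ref{pro:shadow-uniform}) are independent of the choice of extension $G$, because the pA-pseudometric degenerates near $\bd N$.

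Second, the claim that the full shadowing set $Y$ of Proposition \ref{pro:semi} is a compact subset of $\inter N$ is simply false: $h\colon Y\to N_0$ is a continuous \emph{surjection} onto $N_0=\inter N$, which is noncompact whenever $R\neq\emptyset$ (the only case of interest), so $Y$ cannot be compact. What must be shown is the sharper statement that $X^*=h^{-1}(X)$ is bounded away from $\bd N$. Your displayed estimate compares $\til{\Phi}^n(\til x)$ with $\til G^n(\til y)$; that is the shadowing distance, which is already bounded by $C$ and tells you nothing about Euclidean proximity of $y$ to $\bd N$ (since $\til D$ degenerates there, $y$ can be pA-close to $x$ while sitting right next to $\bd N$). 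The argument the paper actually runs compares $\til{\Phi}^k(\til y)$ with $\til G^k(\til y)$ (same point, different maps) over a \emph{bounded} window $\abs k\le k_0$: if $y\in B_\delta(\bd N)$ then both orbit segments stay in $B_{\delta_0}(\bd N)$, hence within $\til D$-distance $1$ of each other, while if $x\notin B_\epsilon(\bd N)$ then $\til D(\til x,\til y)\ge\eta>0$ and hyperbolicity forces $\sup_{\abs k\le k_0}\til D(\til\Phi^k\til x,\til\Phi^k\til y)>C+1$; the triangle inequality then contradicts $(\Phi,x)\widesim{C}(G,y)$. You gesture at ``control of how the pA-metric degenerates near $\bd N$,'' which is indeed the missing ingredient, but the inequality you wrote down points in the wrong direction and would not produce the contradiction.

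So: right roadmap, but the two load-bearing steps (replacing the nonexistent blow-up of $g$ by an ad hoc extension with uniform constants, and proving boundary-avoidance of $h^{-1}(X)$ rather than compactness of $Y$) are exactly where your proposal has gaps.
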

\begin{proof}
Consider the surface $N$ obtained from $S$ by blowing up the elements of $R$ to circles. Let $p\colon N\to S$ be the projection that collapses boundary components to points of $R$, and let $\Phi\colon N\to N$ be a pseudo-Anosov map such that $p\Phi = \phi p$. Let $N_0=\inter N$, so $p\colon N_0\to S_0$ is a homeomorphism, and let $G_0=p^{-1}g|_{S_0}p$.
We denote by $B_\delta(E)$ the $\delta$-neighborhood of a set $E\subset N$ under some fixed complete metric of $N$.
We may also endow $N_0$ with a pA-metric $D$ (as described before Proposition \ref{pro:shadow-uniform}). 
Let $X'=p^{-1}(X)$ and let $\epsilon>0$ be such that $X'\cap B_\epsilon(\bd N)=\emptyset$ and each boundary component of $N$ belongs to a different component of $B_\epsilon(\bd N)$. 

We claim that there exist constants $C$ and $\delta>0$ such that if $(\Phi|_{N_0}, x)\widesim{} (G_0, y)$ then $(\Phi|_{N_0}, x)\widesim{C} (G_0,y)$ and moreover if $x\notin B_\epsilon(\bd N)$, then $y\notin B_\delta(\bd N)$. 
The arguments from the proof are essentially contained in \cite[Lemma 1.3]{MR1037109}. 
First we remark  that the pA-metric $D$ on $N_0$ extends to a pseudo-metric on $N$ (still denoted by $D$) such that $D(x,y)=0$ if and only if $x=y$ or $x$ and $y$ belong to the same boundary component of $N$. If $\tau\colon \til{N}\to N$ is the universal covering map and $\til{N}_0= \tau^{-1}(N_0)$, the lift $\til{D}$ of $D$ to $\til{N}_0$, also extends to an equivariant pseudo-metric $\til{N}$ with the property that $\til{D}(x,y) = 0$ if and only if $x=y$ or $x$ and $y$ belong to the same connected component of $\tau^{-1}(\bd N)$. 

We may choose $0<\delta_0<\epsilon$ such that each connected component of $\tau^{-1}(B_{\delta_0}(\bd N))$ has $\til{D}$-diameter smaller than $1$, and  this is enough to guarantee that if $G\colon N\to N$ is a homeomorphism which coincides with $G_0$ outside $B_{\delta_0}(\bd N)$ and $\til{G}$ is the lift of $G$ equivariantly isotopic to $\til{\Phi}$, then $\til{D}(\Phi(z), \til{G}(z))$ and $\til{D}(\Phi^{-1}(z), \til{G}^{-1}(z))$ are uniformly bounded by a constant independent of $z$ and the choice of $G$. This in turn suffices to guarantee that the constant $C$ from Proposition \ref{pro:shadow-uniform} (applied to $\Phi$ and $G$) is independent of the choice of $G$ (this follows from the proof of \cite[Lemma 2.2]{MR805836}). 

Let $\eta:=\inf \{\til{D}(\til{x},\til{y}): \til{x}\in \til{N}\sm \tau^{-1}(B_\epsilon(\bd N)),\, \til{y}\in \tau^{-1}(B_{\delta_0}(\bd N))\} > 0$.
The properties of the pA-metric imply that there exists $k_0>0$ such that if $\til{x}, \til{y}$ are points such that $\til{D}(\til{x},\til{y})\geq\eta$ then $\sup_{\abs{k}\leq k_0} \til{D}(\til{\Phi}^k(\til{x}), \til{\Phi}^k(\til{y}))> C+1$. Let $0<\delta<\delta_0$ be small enough so that  $\{G^k(x), \Phi^k(x)\}\subset B_{\delta_0}(\bd N)$ whenever $x\in B_\delta(\bd N)$ and $\abs{k}\leq k_0$. Given $y\in B_\delta(\bd N)$ and $x\in N\sm B_\epsilon(\bd N)$, if $\til{y}, \til{x}$ are lifts of $y,x$ then $\til{D}(\til{x}, \til{y})\geq\eta$ and $\til{D}(\til{\Phi}^k(\til{y}), \til{G}^k(\til{y}))<1$ whenever $\abs{k}\leq k_0$; therefore $\sup_{\abs{k}\leq k_0} \til{D}(\til{G}^k(\til{y}), \til{\Phi}^k(\til{x}))>C$. Since the constants are independent of our choice of $G$, this also holds replacing $\til{G}$ by $\til{G}_0$ (the lift of $G_0$ to $\til{N}_0$ which is equivariantly isotopic to $\til{\Phi}|_{\til{N}_0}$).
We conclude from these facts that if $y\in N_0$ and $x\in N\sm B_\epsilon(\bd N)$ are such that $(G_0,y)\widesim{} (\Phi|_{N_0}, x)$, then $y\in N\sm B_\delta(\bd N)$, as we wanted.

Let $Y\subset N_0$ and $h\colon Y\to N_0$ be as in Proposition \ref{pro:semi} applied to $G$ and $\Phi$, and let $X'^*=h^{-1}(X')$. The set $X'^*$ is closed and consists of all $y\in N_0$ for which there exists $x\in X'$ such that $(G|_{N_0}, y)\widesim{} (\Phi|_{N_0},x)$, and since $G$ coincides with $G_0$ on $N\sm B_{\epsilon}(\bd N)\supset X'$, from our previous claim this is equivalent to saying that $(G_0, y)\widesim{} (\Phi|_{N_0},x)$. Moreover, this is also equivalent to $(G_0,y)\widesim{C}(\Phi|_{N_0},x)$. Letting $X^* = p(X'^*)$ and $h_* = php^{-1}|_{X^*}$ we have that $h_*g|_{X^*} = \phi h_*$ and $h_*$ satisfies the required properties (using the metric induced in $S_0$ by the pA-metric $D$ via $p|_{N_0}\colon N_0 \to S_0$).
\end{proof}

Let us now continue with the proof of Theorem \ref{th:handel-gen}. Recall that we are assuming that that $\alpha$ satisfies the pA-hypothesis on $K$ for $f$, and we want to show that there exists a compact invariant subset of $K$ whose every point has rotation number $\alpha$ for $F$ (i.e. that part (1) from Proposition \ref{pro:cases} holds). 

Let $n\in \N$, $R\subset K$ and $\phi$ be as in Proposition \ref{pro:pA-hyp}.  In particular there is an interval $I = (r_1,r_2)$ such that $n\alpha\in I \subset \inter\rho(\hat{\phi})$, where $\hat{\phi}$ is the lift of $\phi$ to $\cA$ equivariantly isotopic rel $R$ to $F^n$.

We may thus apply Proposition \ref{pro:pseudo} and its Addendum to $\phi$ and $r=n\alpha$ to find a compact $\phi$-invariant set $Q\subset \A\sm R$ containing compact invariant subsets $\{Q_t : t\in I\}$ and $M>0$ as in the addendum. 
Let $Q^*\subset \inter \A\sm R$ and $h_*\colon Q^*\to Q$ be as in Lemma \ref{lem:rel-shadow} applied to $g=f^n$, $X=Q$, and $S=\A$.  Let $S_0 = \inter \A\sm R$ and let $\phi_0, g_0$ be the restrictions of $\phi$ and $g$ to $S_0$. Given $y\in Q^*$, the image $h_*(y)$ is the unique point of $S_0$ satisfying $(g|_{S_0}, y) \widesim{C} (\phi|_{S_0}, h_*(y))$ with respect to some continuous metric $D$ on $S_0$ (which depends only on $\phi$). From the fact that $Q^*$ is compact it is easy to verify that $(g|_{S_0}, y) \widesim{C'} (\phi|_{S_0}, h_*(y))$ under the restriction of the original metric of $\A$, for some constant $C'>0$. This in turn implies that $(g|_{\inter \A}, y) \widesim{C'} (\phi|_{\inter \A}, h_*(y))$; in other words, there exist lifts $\hat{y}$ of $y$ and $\hat{x}$ of $x=h_*(y)$ to $\cA$ such that $d(\hat{\phi}^k(\hat{x}), \hat{g}^k(\hat{y}))\leq C'$ for all $k\in \Z$, where $d$ is the euclidean metric.
On the other hand, if we assume that $x\in Q_t$, Proposition \ref{pro:pseudo} implies that $\abss{\pr_1(\hat{\phi}^k(\hat{x}) - \hat{x}) -kt}$ is bounded by a uniform constant $M$ for all $k\in \Z$. From these two facts, we conclude that  
\begin{equation}\label{eq:shad}
\abss{\pr_1(\hat{g}^k(\hat{y}) - \hat{y}) -kt}\leq M+C'\text{ for all $k\in \Z$. }
\end{equation}
 In particular, every $y\in Q_t^*:=(h_*)^{-1}(Q_t)$ satisfies $\rho(\hat{g}, y)=t$.

The key to finish our proof is to show that there exists a sequence $t_i\to n\alpha$  of rational numbers such that $Q_{t_i}^*\cap K\neq \emptyset$ for all $i\in \N$. Indeed, if this holds then replacing $t_i$ by a subsequence we may assume that $Q_{t_i}^*$ converges in the Hausdorff topology to some compact $g$-invariant set $Q_{n\alpha}'^*\subset Q^*$ such that $Q_{n\alpha}'^*\cap K\neq \emptyset$, and since (\ref{eq:shad}) holds for $t=t_i$ and every $\hat{y}\in \pi^{-1}(Q_{t_i})$ it follows that (\ref{eq:shad}) also holds for $t=n\alpha$ and every $\hat{y}\in \pi^{-1}(Q_{n\alpha}'^*)$. In particular, if $y\in Q_{n\alpha}'^*$, then 
$$\alpha = \rho(\hat{g}, y)/n = \rho(F^n, y)/n = \rho(F, y),$$
so letting $Q_\alpha'$ be the $f$-orbit of the $f^n$-invariant set $Q_{n\alpha}'^*$, we obtain a compact $f$-invariant set such that $\rho(F,y)=\alpha$ for all $y\in Q_\alpha'$, so part (1) of Proposition \ref{pro:cases} holds (using $Q_\alpha'\cap K$ in place of $Q_\alpha$), completing the proof that if (3) holds for $\alpha$ then so does (1).

To conclude the existence of the sequence $t_i$, it suffices to show that any $p/q\in I\cap \Q$ such that $p/q\notin \{\rho^-(\hat{g},K), \rho^+(\hat{g}, K)\}$ satisfies $Q_{p/q}^*\cap K\neq \emptyset$.
To show this, we assume $p/q=0$ (replacing $g$ by $g^q$ and $\hat{g}$ by $T^{-p}\hat{g}^q$, and similarly for $\phi$ and $\hat{\phi}$).
Under this assumption, Proposition \ref{pro:flc-extension} implies that there exists $g'$ and a lift $\hat{g}'$ which has no fixed points in $\A$ and which coincides with $\hat{g}$ in a neighborhood of $\pi^{-1}(K)$ and has no fixed points in $\A\sm K$. Note that $g'$ is isotopic to $g$ (hence to $\phi$) relative to $R\subset K$. Applying all the previous arguments to $g'$ instead of $g$, we obtain a compact $g$-invariant set $Q'^*$, a map $h_*'\colon Q'^*\to Q$ and a set $Q_0'^* = h_*'^{-1}(Q_0)$ with the same properties of the corresponding objects for $g$. In particular if $x\in Q_0^*$ and $\hat{x}$ is a lift of $x$, then by the property analogous to (\ref{eq:shad}) we know that $\abss{\pr_1(\hat{g}'^k(\hat{x}) - \hat{x})}$ is bounded for $k\in \Z$. Suppose that $Q_0'^*$ is disjoint from $K$. Then if $U_-$ and $U_+$ are the two components of $\inter \A\sm K$, one of the two sets intersects $Q_0'^*$. Suppose for instance $U_-\cap Q_0'^*\neq \emptyset$. Then $Q_0'^*\cap U_-$ is compact, $g'$-invariant, and every $\hat{g}'$-orbit of a point of $\pi^{-1}(Q_0'^*\cap U_-)$ is bounded. Since $\pi^{-1}(U_-)$ is an invariant set homeomorphic to $\R^2$, by Brouwer theory (for instance Proposition \ref{pro:brouwer-free}) $\hat{g}$ must have a fixed point in $U_-$, which contradicts our choice of $g'$. Thus $Q_0'^*$ intersects $K$. Recall that (from Lemma \ref{lem:rel-shadow}) $h_*(Q_0^*)=Q_0$, where $Q_0^*$ is the set of all $y\in S_0$ such that there exists $x\in Q_0$ with $(\phi|_{S_0}, x)\widesim{} (g|_{S_0}, y)$ with respect to the metric $D$ and $h_*(y)=x$ (which is unique). The analogous properties hold replacing $h_*$, $Q_0^*$ and $g$ by $h_*'$, $Q_0'^*$ and $g'$.
Choose $y'\in Q_0'^*\cap K$ and $y\in h_*^{-1}(h_*'(y'))\subset Q_0^*$. Then we have $(\phi|_{S_0}, h_*'(y')) \widesim{} (g'|_{S_0}, y')$ with respect to $D$. Since $y'\in K$ and $g'|_K$ coincides with $g|_K$ (as do their corresponding lifts), it follows that
$$(\phi_{S_0}, h_*'(y'))\widesim{} (g'|_{S_0}, y') \widesim{}  (g|_{S_0}, y')$$
so the definition of $Q_0^*$ implies that $y'\in Q_0^*$ and $h_*(y') = h_*'(y')$. Thus we have showed that $Q_0'^*\cap K\subset Q_0^*$, and in particular $Q_0^*$ intersects $K$ as we wanted.
This concludes the proof of the theorem.\qed

\bibliographystyle{koro} 
\bibliography{circloid}

\end{document}